\theoremstyle{plain}
\theoremstyle{definition}
\newtheorem{theorem}{Theorem}[section]
\newtheorem{lem}[theorem]{Lemma}
\newtheorem{proposition}[theorem]{Proposition}
\theoremstyle{definition}
\newtheorem{definition}[theorem]{Definition}
\theoremstyle{remark}
\newtheorem{remark}[theorem]{Remark}
\numberwithin{equation}{section}
\begin{document}
%%    The information for the title page will be placed between
%%    \begin{document} and \maketitle. The order of most entries
%%    is determined by the class file and can not be changed by
%%    rearranging them. The maketitle command follows after the
%%    abstract.
%%
%%    Most of the following commands will be completed by the publisher.
%%
%%    The copyrightyear is defined in the .clo file as the first argument
%%    of the copyrightinfo command. If the copyrightyear differs from that
%%    value it might be adjusted by the following definition:
%%
%% \renewcommand{\copyrightyear}{2007}% uncomment to change the copyrightyear.
%%
%\DOIsuffix{theDOIsuffix}
%%
%% issueinfo for the header line
%\Volume{248}
%\Month{01}
%\Year{2007}
%%%
%%%    First and last pagenumber of the article. If the option
%%%    'autolastpage' is set (default) the second argument may be left empty.
%\pagespan{1}{}
%%
%%    Dates will be filled in by the publisher. The 'reviseddate' and
%%    'dateposted' (Published online) entry may be left empty.
%\Receiveddate{XXXX}
%\Reviseddate{XXXX}
%\Accepteddate{XXXX}
%\Dateposted{XXXX}
%%
\keywords{$C_{0}$-Semigroups on complex sectors, ${\mathcal F}$-frequent hypercyclicity, $f$-frequent hypercyclicity, translation semigroups and semigroups induced by semiflows, Fr\' echet spaces}
\subjclass[msc2010]{47A16, 47B37, 47D06}%

%% \pretitle{Editor's Choice}

%% We have a short and a long form for the title. The short form
%% (optional argument) goes into the running head.

\title[$f$-Frequently hypercyclic $C_{0}$-semigroups on complex sectors]{$f$-Frequently hypercyclic $C_{0}$-semigroups on complex sectors}
%% Please do not enter footnotes or \inst{}-notes into the optional
%% argument of the author command. The optional argument will go into
%% the header.  If there is only one address the marker \inst{x} may be
%% omitted.

\author{Belkacem Chaouchi}
\address{Lab. de l'Energie et des Syst\' emes Intelligents, Khemis Miliana University,
 44225 Khemis Miliana, Algeria}
\email{chaouchicukm@gmail.com}

\author{Marko Kosti\' c}
\address{Faculty of Technical Sciences,
University of Novi Sad,
Trg D. Obradovi\' ca 6, 21125 Novi Sad, Serbia}
\email{marco.s@verat.net}

\author{Stevan Pilipovi\' c}
\address{Department for Mathematics and Informatics,
University of Novi Sad,
Trg D. Obradovi\' ca 4, 21000 Novi Sad, Serbia}
\email{pilipovic@dmi.uns.ac.rs}

\author{Daniel Velinov}
\address{Department for Mathematics, Faculty of Civil Engineering, Ss. Cyril and Methodius University, Skopje,
Partizanski Odredi
24, P.O. box 560, 1000 Skopje, Macedonia}
\email{velinovd@gf.ukim.edu.mk}

\begin{abstract}
  We analyze $f$-frequently hypercyclic, $q$-frequently hypercyclic ($q> 1$) and frequently hypercyclic $C_{0}$-semigroups ($q=1$) defined on complex sectors, working in the setting of separable infinite-dimensional Fr\'echet spaces. Some structural results of ours are given for a general class of ${\mathcal F}$-frequently hypercyclic $C_{0}$-semigroups, as well. We investigate generalized frequently hypercyclic translation semigroups
and generalized frequently hypercyclic semigroups induced by semiflows on weighted function spaces. Several illustrative examples are presented.
\end{abstract}
%% maketitle must follow the abstract.
\maketitle                   % Produces the title.

%% If there is not enough space inside the running head
%% for all authors including the title you may provide
%% the leftmark in one of the following three forms:

%% \renewcommand{\leftmark}
%% {First Author: A Short Title}

%% \renewcommand{\leftmark}
%% {First Author and Second Author: A Short Title}

%% \renewcommand{\leftmark}
%% {First Author et al.: A Short Title}

%% \tableofcontents  % Produces the table of contents.

\section{Introduction and preliminaries}

The main aim of this paper is to investigate the classes of ${\mathcal F}$-frequently hypercyclic $C_{0}$-semigroups and $f$-frequently hypercyclic
$C_{0}$-semigroups on complex sector $\Delta$, in the setting of separable infinite-dimensional Fr\'echet spaces.  Here, we assume that $\Delta \in  \{ [0,\infty),\
{\mathbb R},\ {\mathbb C} \}$ or $\Delta=\Delta(\alpha)$ for an
appropriate angle $\alpha \in (0,\frac{\pi}{2}],$ where $\Delta(\alpha):=\{ re^{i\theta} : r\geq 0,\ \theta \in
[-\alpha ,\alpha] \}.$ Further on,  ${\mathcal F}\in P(P(\Delta)),$ where $P(\Delta)$ denotes the power set of $\Delta ,$ ${\mathcal F}\neq \emptyset $ and $f : [0,\infty) \rightarrow [1,\infty)$ is an increasing mapping.

In the case that $\Delta=[0,\infty),$ the classes of ${\mathcal F}$-frequently hypercyclic integrated $C$-semigroups and $f$-frequently hypercyclic
integrated $C$-semigroups have been recently introduced and analyzed in \cite{C-DS}; although there is a real possibility to analyze the corresponding classes of integrated $C$-semigroups
with the index set $\Delta \neq [0,\infty),$ we will focus our attention henceforth to $C_{0}$-semigroups, only. In the case that $q\geq 1$ and $f(t):=t^{q}+1,$ $t\geq 0,$ an $f$-frequently hypercyclic $C_{0}$-semigroup $(T(t))_{t\in \Delta}$ is also said to be $q$-frequently hypercyclic, while the usual frequent hypercyclicity is obtained by plugging $q=1.$
The notion of a frequently hypercyclic $C_{0}$-semigroup $(T(t))_{t\in \Delta}$ on a separable Fr\'echet space $E$ seems to be not considered elsewhere even in the case that $\Delta=[0,\infty)$ and, because of that, we have been forced to reconsider and slightly extend several known results from \cite{man-marina}-\cite{man-peris} to $C_{0}$-semigroups in Fr\'echet spaces; the established results about frequently hypercyclic $C_{0}$-semigroups defined on complex sector $\Delta \neq [0,\infty)$ are completely new. A great deal of our work is devoted to the study of much more general class of $f$-frequently hypercyclic $C_{0}$-semigroups on complex sectors, which is the central object of our investigations, and for which we can freely say that it is completely unexplored by now.

The notion of a frequently hypercyclic linear continuous operator $T$ on a separable Fr\' echet space $E$
was introduced in \cite{bay1} and after that analyzed by many other authors; see \cite{bay1}-\cite{boni-ERAT} and \cite{Grosse}-\cite{erdos}. The most common technique for proving frequent hypercyclicity of $T$ is to show that $T$ satisfies the famous Frequent Hypercyclicity Criterion \cite{boni}-\cite{boni-ERAT}. The notion of a frequently hypercyclic $C_{0}$-semigroup on a separable Banach space was introduced in \cite{man-peris}, while the class of frequently hypercyclic translation $C_{0}$-semigroups on weighted function spaces has been examined in \cite{man-marina}.
On the other hand, hypercyclic and chaotic translation $C_{0}$-semigroups on complex sectors were investigated  in \cite{kuckanje}-\cite{kuckanje-prim}. The strong influential factor for genesis of this paper presents the fact that the notion of a frequently hypercyclic $C_{0}$-semigroup on a complex sector has not yet been defined in the existing literature, even in the setting of separable Banach spaces. For the sake of brevity and better exposition, we will not consider here the notion of strong mixing measures for frequently hypercyclic $C_{0}$-semigroups (cf. \cite{mere}) and some specific subnotions of frequent hypercyclicity connected with the use of weighted densities
that are sharper than the natural  lower density (see \cite{besinjo}, \cite{ernst}).

The organization and main ideas of this paper are briefly described as follows.
After giving some preliminaries about $C_{0}$-semigroups on Fr\'echet spaces, we analyze lower and upper densities in Subsection \ref{mareka}, and remind ourselves of the basic facts about  weighted function spaces, translation semigroups and semigroups induced by semiflows in Subsection \ref{srbo}. Conjugacy lemma for ${\mathcal F}$-frequently hypercyclic $C_{0}$-semigroups on complex sectors is stated in Lemma \ref{conjugacy}. In Proposition \ref{sot}-\ref{manic}, we generalize the implication (iii) $\Rightarrow$ (i) of \cite[Proposition 2.1]{man-peris}
for ${\mathcal F}$-frequently hypercyclic $C_{0}$-semigroups and $f$-frequently hypercyclic $C_{0}$-semigroups
on complex sectors. The main purpose of Proposition \ref{markknop} is to show that for any
hypercyclic $C_{0}$-semigroup $(T(t))_{t\in \Delta}$ we can always find an increasing mapping $f : [0,\infty) \rightarrow [1,\infty)$ satisfying that $(T(t))_{t\in \Delta}$ is $f$-frequently hypercyclic.
In
Theorem \ref{oma}, which is commonly used in applications, we formulate
sufficient conditions for $f$-frequent hypercyclicity of a single operator $T(t_{0}),$ where $t_{0}\in \Delta \setminus \{0\},$ as well the whole semigroup $(T(t))_{t\in \Delta}$ and its restriction to the ray $R$ connecting the origin and $t_{0}.$
The Frequent Hypercyclic Criterion for $C_0$-semigroups \cite[Theorem 2.2]{man-peris} cannot be satisfactorily reformulated for $q$-frequent hypercyclicity, provided that $q>1.$ Essentially, our first contribution is Theorem \ref{f-fhc}, where we formulate and prove $f$-Frequent Hypercyclic Criterion for $C_{0}$-semigroups on complex sectors by following an approach which do not use
the notion of Pettis integrability. Theorem \ref{oma-niz} is an extension of Theorem \ref{oma} for sequences of single operators of considered $C_{0}$-semigroup and this theorem provides an important tool for proving the existence of a frequently hypercyclic semigroup $(T(t))_{t\in \Delta(\pi/4)}$ without any (frequently) hypercyclic single operator; see also Examples I in Subsection \ref{zajeb}, in which we reconsider several known examples from \cite{kuckanje}-\cite{kuckanje-prim} and \cite[Chapter 3]{knjigaho}. In Theorem \ref{f-freq-21.}, we state an important extension of \cite[Proposition 2.1]{man-peris} for $f$-frequently hypercyclic semigroups with the index set $\Delta=[0,\infty).$

The third section  is devoted to the study of $f$-frequently hypercyclic translation semigroups
and $f$-frequently hypercyclic semigroups induced by semiflows on weighted function spaces. The method presented in the proofs of Theorem \ref{dov} and Theorem \ref{dovs}, in which we profile sufficient conditions for $f$-frequent hypercyclicity of translation semigroups on complex sectors, is repeated after that several times with minor modifications (obtaining necessary and sufficient conditions is a non-trivial problem, even for $q$-hypercyclicity). These results can be slightly improved by taking into analysis the sequences of single operators of considered $C_{0}$-semigroups (see Theorem \ref{dov-niz}-Theorem \ref{dovs-niz}). As explained in Remark \ref{hnme}, these results are applicable for $q$-frequent hypercyclicity, where $q>1,$ and not for the usual frequent hypercyclicity.
In Theorem \ref{nasser} and Proposition \ref{qwea}, we slightly extend the assertions of \cite[Theorem 6]{kuckanje} and \cite[Proposition 3.4]{man-marina}, respectively, concerning  frequently hypercyclic $C_{0}$-semigroups. In our examinations of $C_{0}$-semigroups induced by semiflows, we essentially use the same methods as for translation $C_{0}$-semigroups so that we shorten the proofs to a large extent or only explain how these proofs can be deduced.
It is worth noting that frequently hypercyclic weighted composition $C_{0}$-semigroups with the index set $\Delta=[0,\infty)$ have been considered in \cite{kalmes}, where the author founded certain conditions under which these semigroups satisfy the Frequent Hypercyclicity Criterion for $C_{0}$-semigroups  iff
they are chaotic (an application has been made to the linear von Foerster-Lasota equation in the spaces $L^{p}[0,1]$ and $W^{1,p}_{\ast}[0,1]$). Concerning the abstract first order differential equations
with a non-vanishing zero-order term, whose solutions are governed by weighted composition $C_{0}$-semigroups, we would like to note that the assertions of \cite[Theorem 3.1.25, Theorem 3.1.27]{knjigaho} hold for the general notion of ${\mathcal F}$-hypercyclicity and its subnotions; in other words, our main structural results established here for the abstract first order differential equations
without a non-vanishing zero-order term can be almost technically modified for the abstract first order differential equations
containing a non-vanishing zero-order term. Because of that, we will skip all related details concerning this question here.
Chaotic and frequently hypercyclic weighted composition $C_{0}$-semigroups on complex sectors deserve special attention and these classes of $C_{0}$-semigroups will be further analyzed in our follow-up research \cite{composition}. In Section \ref{srboljube}, we also present a great number of illustrative examples, applications and open problems (see Subsection 3.1 and Subsections 3.4-3.6).

\subsection{Notation and definitions}\label{mareka} We use the standard notation throughout the paper. For any $s\in {\mathbb R},$ set $\lfloor s \rfloor :=\sup \{
l\in {\mathbb Z} : s\geq l \}$ and $\lceil s \rceil :=\inf \{ l\in
{\mathbb Z} : s\leq l \}.$ By $E$ we denote a separable Fr\' echet space over the field ${\mathbb K}\in \{{\mathbb R},\ {\mathbb C}\}$; we assume that
the topology of $E$ is induced by the fundamental system
$(p_{n})_{n\in {\mathbb N}}$ of increasing seminorms. If $Y$ is also a Fr\' echet space, over the same field of scalars as $E,$ then by $L(E,Y)$ we denote the space consisting of all continuous linear mappings from $E$ into $Y.$ The
translation invariant metric $d: E\times E \rightarrow [0,\infty),$ defined by
$$
d(x,y):=\sum
\limits_{n=1}^{\infty}\frac{1}{2^{n}}\frac{p_{n}(x-y)}{1+p_{n}(x-y)},\quad
x,\ y\in E,
$$
satisfies, among many other properties, the following ones:
$d(x+u,y+v)\leq d(x,y)+d(u,v)$ and $d(cx,cy)\leq (|c|+1)d(x,y),\
c\in {\mathbb K},\ x,\ y,\ u,\ v\in X.$ Set $L(x,\epsilon):=\{y\in X : d(x,y)<\epsilon\},$ $\|x\|:=d(x,0)$ and $L_{n}(x,\epsilon):=\{y\in X : p_{n}(x-y)<\epsilon\}$ ($n\in {\mathbb N},$ $\epsilon>0$). Denote by $E^{\ast}$ the dual space of $E.$ For a closed linear operator $A$ on $E,$ we denote by $D(A),$ $R(A)$ and $N(A)$ its domain, range and kernel, respectively. %If ${\tilde E}$ is a linear subspace of $E,$ then the part of $A$ is ${\tilde E},$ $A_{|{\tilde E}}$ for short, is defined through $A_{{\tilde E}}:=\{(x,y) \in A : x,\ y\in {\tilde E}\}$ (we will identify an operator and its graph henceforth).
 Set $D_{\infty}(A):=\bigcap_{k\in {\mathbb N}}D(A^{k}).$
Basic information about Bochner integration of Fr\'echet space valued functions can be obtained by consulting \cite[pp. 99-102]{martinez} and references cited therein.
Let us recall that a collection of series
$\sum_{n=1}^{\infty}x_{n,k},$ $k\in J$ is called
unconditionally convergent, uniformly in $k\in J$ iff for any $\epsilon>0$ there is an integer
$N\in {\mathbb N}$ such that for any finite set $F\subseteq [N,\infty) \cap {\mathbb N}$ and for every $k\in J$ we have $\sum_{n\in F}x_{n,k} \in L(0,\epsilon);$ see \cite{boni}, \cite{kadets}, \cite{robertson} and references cited therein for further information on the subject.
Set $\Delta_{\delta}:=\{ t\in \Delta : |t| \leq \delta\}$ and $B(t,\delta):=\{ x\in {\mathbb K} : |x-t| \leq \delta\}.$
We refer to \cite{kalmes} and \cite{knjigaho} for the basic material about $C_0-$ semigroups on Fr\' echet spaces. Within our framework,
any semigroup $(T(t))_{t\in \Delta}$ is locally equicontinuous, i.e., for any number $t_{0} > 0,$ the family
of operators $\{T(t) : t\in \Delta_{t_{0}}\}$ is equicontinuous. We will use the following definitions.

%\subsection{Lower and upper densities}\label{mareka}

%We first need to recall the following definitions from \cite{1211212018}:

\begin{definition}\label{4-skins-MLO-okay} (\cite{1211212018})
Let $(T_{n})_{n\in {\mathbb N}}$ be a sequence of linear operators acting between the spaces
$E$ and $Y,$ let $T$ be a linear operator on $E$, and let $x\in E$. Suppose that ${\mathcal F}\in P(P({\mathbb N}))$ and ${\mathcal F}\neq \emptyset.$ Then it is said
that:
\begin{itemize}
\item[(i)] $x$ is an ${\mathcal F}$-hypercyclic element of the sequence
$(T_{n})_{n\in {\mathbb N}}$ iff
$x\in \bigcap_{n\in {\mathbb N}} D(T_{n})$ and for each open non-empty subset $V$ of $Y$ we have that
$$
S(x,V):=\bigl\{ n\in {\mathbb N} : T_{n}x \in V \bigr\}\in {\mathcal F} ;
$$
$(T_{n})_{n\in {\mathbb N}}$ is said to be ${\mathcal F}$-hypercyclic iff there exists an ${\mathcal F}$-hypercyclic element of
$(T_{n})_{n\in {\mathbb N}}$;
\item[(ii)] $T$ is ${\mathcal F}$-hypercyclic iff the sequence
$(T^{n})_{n\in {\mathbb N}}$ is ${\mathcal F}$-hypercyclic; $x\in D_{\infty}(T)$ is said to be
an ${\mathcal F}$-hypercyclic element of $T$ iff $x$ is an ${\mathcal F}$-hypercyclic element of the sequence
$(T^{n})_{n\in {\mathbb N}}.$
\end{itemize}
\end{definition}

\begin{definition}\label{prckojed} (\cite{1211212018})
Suppose that $q\in [1,\infty),$ $A \subseteq {\mathbb N}$ and $(m_{n})$ is an increasing sequence in $[1,\infty).$ Then we define:
\begin{itemize}
\item[(i)] The lower $q$-density of $A,$ denoted by $\underline{d}_{q}(A),$ through:
$$
\underline{d}_{q}(A):=\liminf_{n\rightarrow \infty}\frac{|A \cap [1,n^{q}]|}{n}.
$$
\item[(ii)] The lower $(m_{n})$-density of $A,$ denoted by $\underline{d}_{m_{n}}(A),$ through:
$$
\underline{d}_{{m_{n}}} (A):=\liminf_{n\rightarrow \infty}\frac{|A \cap [1,m_{n}]|}{n}.
$$
\end{itemize}
\end{definition}

Assume that $q\in [1,\infty)$ and $(m_{n})$ is an increasing sequence  in $[1,\infty).$ Consider the notion introduced in Definition \ref{4-skins-MLO-okay} with:
(i) ${\mathcal F}=\{A \subseteq {\mathbb N} : \underline{d}(A):= \underline{d}_{1}(A)>0\},$ (ii) ${\mathcal F}=\{A \subseteq {\mathbb N} : \underline{d}_{q}(A)>0\},$
(iii) ${\mathcal F}=\{A \subseteq {\mathbb N} : \underline{d}_{{m_{n}}}(A)>0\};$
then we say that $(T_{n})_{n\in {\mathbb N}}$ ($T$) is frequently hypercyclic, $q$-frequently hypercyclic and
l-$(m_{n})$-hypercyclic, respectively ($x$ is then called a frequently hypercyclic vector, $q$-frequently hypercyclic vector and
l-$(m_{n})$-hypercyclic vector of $(T_{n})_{n\in {\mathbb N}}$ ($T$), respectively).

Denote by $m(\cdot)$ the Lebesgue measure on $\Delta.$
The following definition, providing a continuous analogue of Definition \ref{prckojed}, will be sufficiently enough for our further work:

\begin{definition}\label{prckojed-prim} (\cite{1211212018})
Suppose that $q\in [1,\infty),$ $A\subseteq \Delta$ and $f : [0,\infty) \rightarrow [1,\infty)$ is an increasing mapping. Then we define:
\begin{itemize}
\item[(i)] The lower $qc$-density of $A,$ denoted by $\underline{d}_{qc}(A),$ through:
$$
\underline{d}_{qc}(A):=\liminf_{t\rightarrow \infty}\frac{m(A \cap \Delta_{t^{q}})}{t}.
$$
\item[(ii)] The lower $f$-density of $A,$ denoted by $\underline{d}_{f}(A),$ through:
$$
\underline{d}_{f} (A):=\liminf_{t\rightarrow \infty}\frac{m(A \cap \Delta_{f(t)})}{t}.
$$
\end{itemize}
\end{definition}

\subsection[Weighted function spaces, translation semigroups and...]{Weighted function spaces, translation semigroups and semigroups induced by semiflows}\label{srbo}

A measurable function
$\rho : \Delta \rightarrow (0,\infty)$ is said to be an
admissible weight function\index{admissible weight function} iff there exist constants $M\geq 1$ and
$\omega \in {\mathbb R}$ such that $\rho(t)\leq Me^{\omega
|t'|}\rho(t+t')$ for all $t,\ t'\in \Delta.$ For such a function
$\rho(\cdot),$ we introduce the following Banach spaces:
$$
L^{p}_{\rho}(\Delta, {\mathbb K}):=\bigl\{ u : \Delta \rightarrow
{\mathbb K} \ ; u(\cdot) \mbox{ is measurable and } ||u||_{p} <\infty \bigr\},
$$
where $p\in [1,\infty)$ and $||u||_{p}:=(\int _{\Delta}|u(t)|^{p}
\rho(t)\, dt)^{1/p},$ as well as
$$
C_{0,\rho}(\Delta, {\mathbb K}):=\Bigl\{u : \Delta \rightarrow {\mathbb
K} \ ; u(\cdot) \mbox{ is continuous and }\lim _{t \rightarrow
\infty}u(t)\rho(t)=0\Bigr\},
$$
with $||u||:=\sup _{t\in \Delta}|u(t)\rho(t)|.$

Assume that $\Delta' \in  \{ [0,\infty),\
{\mathbb R},\ {\mathbb C} \}$ or $\Delta'=\Delta(\alpha')$ for an
appropriate angle $\alpha' \in (0,\frac{\pi}{2}],$ and $\Delta'\subseteq \Delta.$
Then the translation semigroup
$(T(t))_{t\in \Delta'},$ given by
\begin{align*}
\bigl(T(t)f\bigr)(x):=f(x+t),\quad x\in \Delta,\ t\in \Delta',
\end{align*}
is strongly continuous on $
L^{p}_{\rho}(\Delta, {\mathbb K})$ and $C_{0,\rho}(\Delta, {\mathbb K}).$

Hypercyclic and chaotic $C_{0}$-semigroups induced by semiflows have been analyzed for the first time in \cite{kalmes}.
We will use the following definition:

\begin{definition}\label{semiflow} (\cite{kalmes}, \cite{knjigaho})
Let $n\in {\mathbb N}$ and $\Omega$ be an open non-empty subset
of ${{\mathbb R}^{n}}.$ A continuous mapping $\varphi : \Delta
\times \Omega \rightarrow \Omega$ is called a semiflow if\index{semiflow}
$\varphi(0,x)=x,\ x\in \Omega ,$
$$ %\begin{equation}\label{retype}
\varphi(t+s,x)=\varphi(t,\varphi(s,x)),\quad t,\ s \in \Delta , \ x\in
\Omega \mbox{ and}
$$ %\end{equation}
$$ %\begin{equation}\label{retype1}
x\mapsto \varphi(t,x) \mbox{ is injective for all } t\in \Delta .
$$ %\end{equation}
\end{definition}

Denote by $\varphi(t,\cdot)^{-1}$ the inverse mapping of
$\varphi(t,\cdot),$ i.e.,
$$ %\begin{equation}\label{plpl}
y=\varphi(t,x)^{-1} \mbox{ iff } x=\varphi(t,y),\  t\in \Delta .
$$ %\end{equation}

We deal with
the Banach space $L^{p}_{\rho_{1}}(\Omega, {\mathbb K}),$ where $\rho_{1} :
\Omega \rightarrow (0,\infty)$ is a locally integrable function
and the norm of an element $f\in L^{p}_{\rho_{1}}(\Omega, {\mathbb K})$
is given by $||f||_{p}:=(\int _{\Omega}|f(x)|^{p}\rho_{1}(x)
\, dx)^{1/p}.$ With a little abuse of notation, by $C_{0,\rho_1}(\Omega, {\mathbb K})$ we denote the Banach space
consisting of all continuous functions $f : \Omega \rightarrow
{\mathbb K}$ satisfying that, for every $\epsilon>0,$ $\{x\in \Omega
: |f(x)|\rho_1(x)\geq \epsilon \}$ is a compact subset of $\Omega;$
at this place, $\rho_1 : \Omega \rightarrow (0,\infty)$ is an upper
semicontinuous function and the norm of an element $f\in
C_{0,\rho_1}(\Omega, {\mathbb K})$ is given by $||f||:=\sup
_{x\in \Omega}|f(x)|\rho_1(x).$
It is well-known that $C_{c}(\Omega, {\mathbb K}),$ the subspace of $C(\Omega, {\mathbb K})$
consisting of all compactly supported functions,
is dense in $L^{p}_{\rho}(\Omega, {\mathbb K})$ and
$C_{0,\rho}(\Omega
,{\mathbb K}).$ If the field ${\mathbb K}$ is clearly detemined, then we also write $C_{c}(\Omega)$ for $C_{c}(\Omega, {\mathbb K})$ and so on and so forth.

Given a number $t\in \Delta,$ a semiflow $\varphi : \Delta \times
\Omega \rightarrow \Omega$ and a function $f : \Omega \rightarrow
{\mathbb K},$ we define $T_{\varphi}(t)f : \Omega \rightarrow {\mathbb
K}$ by $(T_{\varphi}(t)f)(x):=f(\varphi(t,x)),\ x\in \Omega .$ Then
$T_{\varphi}(0)f=f,$
$T_{\varphi}(t)T_{\varphi}(s)f=T_{\varphi}(s)T_{\varphi}(t)f=T_{\varphi}(t+s)f,\
t,\ s \in \Delta$ and Brouwer's theorem implies $C_{c}(\Omega)
\subseteq T_{\varphi}(t)C_{c}(\Omega).$ Furthermore, we have the following lemma.

\begin{lem}\label{mars} (\cite{knjigaho})
\begin{itemize}
\item[(i)] Suppose that $\varphi : \Delta \times \Omega \rightarrow \Omega$ is a
semiflow and $\varphi(t,\cdot)$ is a locally Lipschitz continuous
function for all $t\in \Delta .$ Then \emph{(b)} implies \emph{(a)}, where:
\begin{itemize}
\item[(a)] $(T_{\varphi}(t))_{t\in \Delta}$ is a $C_{0}$-semigroup in $L^{p}_{\rho_{1}}(\Omega)$ and
\item[(b)] \begin{equation}\label{sufficient}
\exists M, \omega \in {\mathbb R}\  \forall t \in \Delta
: \rho_{1}(x) \leq  Me^{\omega
|t|}\rho_{1}(\varphi(t,x))|\det D\varphi (t,x)| \mbox{
a.e. }x\in \Omega.
\end{equation}
\end{itemize}
If, additionally, $\varphi(t,\cdot)^{-1}$ is locally Lipschitz
continuous for every $t\in \Delta,$ then the above are equivalent.
\item[(ii)] Let $\varphi : \Delta \times \Omega \rightarrow \Omega$ be a
semiflow. Then $(T_{\varphi}(t))_{t\in \Delta}$ is a $C_{0}$-semigroup in $C_{0,\rho}(\Omega)$ iff the following
holds:
\begin{itemize}
\item[(a)] $\exists M, \omega \in {\mathbb R} \, \,  \forall t \in \Delta ,
\, x\in \Omega : \rho(x) \leq  Me^{\omega |t|}\rho(\varphi(t,x))$ and
\item[(b)] for every compact set $K\subseteq \Omega$ and for every $\delta>0,$
$t\in \Delta:$
\begin{equation}\label{kompakt}
\varphi(t,\cdot)^{-1}(K) \ \cap \ \{ x\in \Omega : \rho(x)\geq
\delta \} \mbox{ is a compact subset of }\Omega.
\end{equation}
\end{itemize}
\end{itemize}
\end{lem}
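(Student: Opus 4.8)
The plan is to check, separately in each of the two settings, the three defining properties of a $C_{0}$-semigroup: (1) each $T_{\varphi}(t)$ is a well-defined bounded linear operator on the Banach space in question, and this boundedness is \emph{equivalent} to a domination estimate on the weight; (2) the semigroup law $T_{\varphi}(t)T_{\varphi}(s)=T_{\varphi}(t+s)$; and (3) strong continuity at $t=0$, which propagates to all of $\Delta$ via the identity $T_{\varphi}(t+s)f-T_{\varphi}(t)f=T_{\varphi}(t)\bigl(T_{\varphi}(s)f-f\bigr)$ together with the local equicontinuity that every semigroup in our framework enjoys. Ingredient (2) was already recorded, as a purely algebraic identity between composition operators, in the paragraph preceding the lemma, so the content is in (1) and (3).

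For part (i), the implication (b)$\Rightarrow$(a) is a change of variables: starting from $\|T_{\varphi}(t)f\|_{p}^{p}=\int_{\Omega}|f(\varphi(t,x))|^{p}\rho_{1}(x)\,dx$, one bounds $\rho_{1}(x)$ by the right-hand side of \eqref{sufficient} and applies the area formula for the injective locally Lipschitz map $\varphi(t,\cdot)$ — together with $\varphi(t,\Omega)\subseteq\Omega$ — to reach $\|T_{\varphi}(t)f\|_{p}^{p}\le Me^{\omega|t|}\int_{\Omega}|f(y)|^{p}\rho_{1}(y)\,dy=Me^{\omega|t|}\|f\|_{p}^{p}$; thus each $T_{\varphi}(t)$ is bounded, and together with ingredients (2)--(3) this yields (a). For the converse one assumes $(T_{\varphi}(t))_{t\in\Delta}$ is a $C_{0}$-semigroup in $L^{p}_{\rho_{1}}(\Omega)$, so every $T_{\varphi}(t)$ is bounded; running the substitution backwards — here the local Lipschitz continuity of $\varphi(t,\cdot)^{-1}$ enters, making $\varphi(t,\cdot)$ a bi-Lipschitz homeomorphism onto the open set $\varphi(t,\Omega)$ — shows that pushing the measure $\rho_{1}\,dx$ forward through $\varphi(t,\cdot)$ produces a weight $\widetilde{\rho}_{1}$ on $\varphi(t,\Omega)$ with $\int_{\varphi(t,\Omega)}|g|^{p}\widetilde{\rho}_{1}\,dy\le\|T_{\varphi}(t)\|^{p}\int_{\Omega}|g|^{p}\rho_{1}\,dy$ for all $g\ge0$; testing against normalized approximate indicators at Lebesgue points forces $\widetilde{\rho}_{1}\le\|T_{\varphi}(t)\|^{p}\rho_{1}$ a.e.\ on $\varphi(t,\Omega)$, which after substituting back is exactly $\rho_{1}(x)\le\|T_{\varphi}(t)\|^{p}\rho_{1}(\varphi(t,x))|\det D\varphi(t,x)|$ a.e. Finally, since the semigroup is locally equicontinuous, $t\mapsto\|T_{\varphi}(t)\|$ is bounded on compacta and grows at most exponentially by the semigroup law, so one picks $M,\omega$ with $\|T_{\varphi}(t)\|^{p}\le Me^{\omega|t|}$, which is \eqref{sufficient}.

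For part (ii), boundedness of $T_{\varphi}(t)$ on $C_{0,\rho}(\Omega)$ is pinned down by probing with compactly supported functions. If $(T_{\varphi}(t))_{t\in\Delta}$ is a $C_{0}$-semigroup there, then for fixed $x_{0}\in\Omega$ and $\eta\in(0,1)$ one picks $g\in C_{c}(\Omega)$ with $0\le g\le(1-\eta)/\rho(\varphi(t,x_{0}))$, equal to $(1-\eta)/\rho(\varphi(t,x_{0}))$ near $\varphi(t,x_{0})$ and supported so close to $\varphi(t,x_{0})$ that $g\rho\le1$ there (possible since $\rho$ is upper semicontinuous); then $\|g\|\le1$ while $\|T_{\varphi}(t)g\|\ge g(\varphi(t,x_{0}))\rho(x_{0})$, so $\eta\downarrow0$ gives $\rho(x_{0})\le\|T_{\varphi}(t)\|\rho(\varphi(t,x_{0}))$, i.e.\ (a) after absorbing operator norms into $Me^{\omega|t|}$ as before. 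For (b), given compact $K\subseteq\Omega$ take $g\in C_{c}(\Omega)$ with $g\equiv1$ on $K$; then $T_{\varphi}(t)g\in C_{0,\rho}(\Omega)$ and $\varphi(t,\cdot)^{-1}(K)\cap\{\rho\ge\delta\}\subseteq\{x:|T_{\varphi}(t)g(x)|\rho(x)\ge\delta\}$, a compact set, while the left-hand set is closed in $\Omega$ (since $\varphi(t,\cdot)$ is continuous and $\rho$ is upper semicontinuous), hence compact. Conversely, assuming (a) and (b): from (a), $|T_{\varphi}(t)f(x)|\rho(x)\le Me^{\omega|t|}|f(\varphi(t,x))|\rho(\varphi(t,x))\le Me^{\omega|t|}\|f\|$, so $T_{\varphi}(t)$ is bounded; and for $\epsilon>0$, on $\{|T_{\varphi}(t)f|\rho\ge\epsilon\}$ one has $\varphi(t,x)\in A:=\{y:|f(y)|\rho(y)\ge\epsilon/(Me^{\omega|t|})\}$ (compact, as $f\in C_{0,\rho}$) and hence $\rho(x)\ge\epsilon/\sup_{A}|f|=:\delta>0$, so that set lies inside the compact set $\varphi(t,\cdot)^{-1}(A)\cap\{\rho\ge\delta\}$ supplied by (b); being closed it is compact, so $T_{\varphi}(t)f\in C_{0,\rho}(\Omega)$.

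The remaining, and in my view only genuinely delicate, point is strong continuity at $t=0$; using density of $C_{c}(\Omega)$ and the uniform bound $\sup_{t\in\Delta_{1}}\|T_{\varphi}(t)\|<\infty$ just obtained, it suffices to treat $f\in C_{c}(\Omega)$ with support $S$. The crux is to confine the support of $f(\varphi(t,\cdot))-f$ to one fixed compact subset of $\Omega$, uniformly for $t$ in a small $\Delta_{\varepsilon}$. Choosing a real $t_{0}>0$ large enough that $t_{0}-t\in\Delta$ for every $t\in\Delta_{\varepsilon}$, the set $K_{0}:=\varphi(\{t_{0}-t:t\in\Delta_{\varepsilon}\}\times S)$ is compact, and the semigroup law gives $\varphi(t_{0},x)=\varphi(t_{0}-t,\varphi(t,x))\in K_{0}$ whenever $t\in\Delta_{\varepsilon}$ and $\varphi(t,x)\in S$, so $\varphi(t,\cdot)^{-1}(S)\subseteq\varphi(t_{0},\cdot)^{-1}(K_{0})$ for all such $t$; intersecting with $\{\rho\ge\delta\}$ and invoking \eqref{kompakt} at time $t_{0}$ for $K_{0}$ (respectively, in case (i), leaning on local integrability of $\rho_{1}$ and the area-formula bound from \eqref{sufficient} in an analogous way) produces the desired compact set — away from a region where the weight is $<\delta$, on which the difference has $C_{0,\rho}$-norm at most $2\delta\sup_{\Omega}|f|$. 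On that compact set $f(\varphi(t,x))\to f(x)$ uniformly as $t\to0$ by joint continuity of $\varphi$ and uniform continuity of $f$, and the weight there is controlled (upper semicontinuous, hence bounded; or locally integrable), whence $\|T_{\varphi}(t)f-f\|\to0$. Carrying this confinement argument out cleanly in the $L^{p}_{\rho_{1}}$ case, where there is no weight cutoff and one must rely on \eqref{sufficient}, is the step I expect to cost the most care. Once strong continuity at $0$ is established, ingredient (3) extends it to all $t\in\Delta$, completing the proof.
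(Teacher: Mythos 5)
The paper states this lemma without proof, citing \cite{knjigaho} (and ultimately \cite{kalmes}), so there is no in-paper argument to compare against; your proof follows exactly the standard route of those sources (change of variables/area formula for the $L^{p}_{\rho_{1}}$ estimate, bump-function probing and superlevel-set compactness for $C_{0,\rho}$, and the $\varphi(t,\cdot)^{-1}(S)\subseteq\varphi(t_{0},\cdot)^{-1}(K_{0})$ confinement for strong continuity at $0$). The argument is correct, including the $L^{p}$ strong-continuity step you flag as delicate, which closes via dominated convergence once local integrability of $\rho_{1}$ and the bound \eqref{sufficient} applied at $t_{0}$ give $\varphi(t_{0},\cdot)^{-1}(K_{0})$ finite $\rho_{1}$-measure.
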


\section[Generalized frequent hypercyclicity for $C_{0}$-semigroups on complex sectors]{Generalized frequent hypercyclicity for $C_{0}$-semigroups on complex sectors}\label{srboljub}

We start our work by proposing the following general definition:

\begin{definition}\label{4-skins}
Assume that $(T(t))_{t\in \Delta}$ is a $C_{0}$-semigroup on $E,$ and $x\in E$. Let ${\mathcal F}\in P(P(\Delta))$ and ${\mathcal F}\neq \emptyset.$ Then it is said
that $x$ is an ${\mathcal F}$-hypercyclic element (vector) of $(T(t))_{t\in \Delta}$ iff
for each open non-empty subset $V$ of $E$ we have
$$
S(x,V):=\bigl\{ t\in \Delta : T(t)x \in V \bigr\}\in {\mathcal F} .
$$
It is said that $(T(t))_{t\in \Delta}$ is ${\mathcal F}$-hypercyclic iff there exists an ${\mathcal F}$-hypercyclic element of $(T(t))_{t\in \Delta},$ while $(T(t))_{t\in \Delta}$ is said to be hypercyclic iff it is hypercyclic with ${\mathcal F}$ being the collection of all non-empty subsets of $P(\Delta),$ i.e., if for each open non-empty subset $V$ of $E$ we have
$
S(x,V)\neq \emptyset;$ in this case, $x$ is said to be a hypercyclic element of $(T(t))_{t\in \Delta}.$
\end{definition}

The following lemma has been proved in \cite[Theorem 3.1.2(i)]{knjigaho} for the case that $\Delta\in \{ [0,\infty), \Delta(\alpha)\},$ with a completely different proof given:

\begin{lem}\label{lemara}
Suppose that $(T(t))_{t\in \Delta}$ is a hypercyclic $C_{0}$-semigroup, $x\in E$ is a hypercyclic element of $(T(t))_{t\in \Delta}$ and $s>0.$ Then
the set $\{T(t)x: t\in \Delta \setminus \Delta_{s}\}$ is dense in $E.$
\end{lem}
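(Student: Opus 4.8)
The plan is to reduce the claim about the "large-time" orbit $\{T(t)x : t \in \Delta\setminus\Delta_s\}$ to the full-orbit density by pushing the hypercyclic vector $x$ forward under a fixed semigroup operator. Fix a number $r > s$ with $r \in \Delta$ (e.g. $r$ on the central ray of $\Delta$, so that $r = s+1$ works). The key observation is that $T(r)x$ is again a hypercyclic vector of $(T(t))_{t\in\Delta}$: indeed, for any open non-empty $V \subseteq E$, continuity of $T(r)$ and the fact that $T(r)$ has dense range (a standard consequence of hypercyclicity — every hypercyclic operator on an infinite-dimensional space has dense range, and here $T(r)$ commutes with all $T(t)$) show that $T(r)^{-1}(V)$ contains an open non-empty set, whence $S(x, T(r)^{-1}V) \neq \emptyset$, and by the commutation relation $T(t)T(r)x = T(t+r)x \in V$ for some $t \in \Delta$. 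Hence $\{T(t)(T(r)x) : t \in \Delta\}$ is dense in $E$.

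Next I would translate this density statement back to $x$. Since $T(t)(T(r)x) = T(t+r)x$ and, as $t$ ranges over $\Delta$, the index $t+r$ ranges over $r + \Delta \subseteq \Delta$, we obtain that $\{T(u)x : u \in r+\Delta\}$ is dense in $E$. It therefore suffices to check that $r + \Delta \subseteq \Delta \setminus \Delta_s$. This is immediate from $|u| \geq |r| - 0 $... more carefully: for $u = r + t$ with $t \in \Delta$, write $u = r + t$; one needs $|u| > s$. When $\Delta = \Delta(\alpha)$ (or $[0,\infty)$), $r$ lies on the positive real axis and $t \in \Delta$ has $\operatorname{Re} t \geq |t|\cos\alpha \geq 0$, so $|u| \geq \operatorname{Re} u = r + \operatorname{Re} t \geq r > s$; the cases $\Delta \in \{\mathbb{R},\mathbb{C}\}$ are handled the same way by choosing $r$ real and positive (for $\mathbb{R}$) or noting $\operatorname{Re}(r+t)\ge \operatorname{Re} r - |t|$ fails, so instead for $\mathbb R$ one uses $u = r+t$ with $t\in\mathbb R$ covering all of $\mathbb R$, which is not contained in the complement — so for $\Delta=\mathbb R$ or $\mathbb C$ one argues slightly differently, restricting $t$ to the subsemigroup $[0,\infty)$ or $\Delta(\alpha)\subseteq\mathbb C$; see below. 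This gives $\{T(u)x : u \in r+\Delta'\} \subseteq \{T(t)x : t \in \Delta\setminus\Delta_s\}$ for a suitable subsemigroup $\Delta'$, and density of the smaller set forces density of the larger.

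The main obstacle is the density-of-range claim for $T(r)$ in the Fréchet-space setting and, relatedly, the choice of $r$ and of the auxiliary subsemigroup $\Delta'$ when $\Delta \in \{\mathbb{R},\mathbb{C}\}$, where $r + \Delta = \Delta$ and no direct inclusion into the complement of $\Delta_s$ holds. The clean fix is: pick any $t_0 \in \Delta$ with $\operatorname{Re} t_0 > 0$ and $|t_0|$ small, and consider the one-parameter sub-semigroup $\{T(\lambda t_0) : \lambda \geq 0\}$; by hypercyclicity of the full semigroup together with Lemma-type density arguments (or directly: the orbit map $t \mapsto T(t)x$ is continuous, so $\{T(t)x : t \in \Delta,\ |t| > s\} = \overline{\{T(t)x : t \in \Delta\}}$ because the "small" part $\{T(t)x : |t|\le s\}$ is contained in the closure of the translated orbit $\{T(t+t_0)x : |t|\le s\}$, and iterating pushes it out). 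Concretely, I expect the argument to run: let $y = T(r)x$ with $r>s$, show $y$ is hypercyclic, and conclude $\overline{\{T(t)y:t\in\Delta\}} = E$; then $\{T(t)y : t\in\Delta\} = \{T(t+r)x : t\in\Delta\} \subseteq \{T(u)x : u\in\Delta,\ |u|>s\}$ once $r$ is taken on the central ray, using $|u| = |t+r| \geq \operatorname{Re}(t+r) \geq \operatorname{Re} r = r > s$ for $t$ in the (central-ray-based) sector — and for the degenerate cases $\Delta\in\{\mathbb R,\mathbb C\}$ one first restricts attention to hypercyclicity along the sub-semigroup indexed by $[0,\infty)\cdot t_0$, which is legitimate since a hypercyclic vector for $(T(t))_{t\in\Delta}$ need not be hypercyclic for the sub-semigroup — so in fact the honest route for $\mathbb{R},\mathbb{C}$ is to observe directly that $\Delta\setminus\Delta_s$ is "cofinal" and absorb the bounded part via continuity of the orbit and the group/semigroup law, which is exactly the Fréchet-space analogue of the Banach-space argument and is where the real care is needed.
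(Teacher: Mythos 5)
Your reduction is genuinely different from the paper's proof, which is a duality argument: one assumes $x^{\ast}\in E^{\ast}$ annihilates $\{T(t)x : |t|\geq s\}$, chooses $k>0$ so large that $\|kT(t_{0})x\|>\max_{t\in \Delta_{s}}\|T(t)x\|$ (so that any orbit sequence converging to $kT(t_{0})x$ must eventually leave $\Delta_{s}$), deduces $\langle x^{\ast},T(t_{0})x\rangle=0$ for every $t_{0}\in\Delta$, and concludes $x^{\ast}=0$. Your route, however, has two genuine gaps. First, the cases $\Delta=\mathbb{R}$ and $\Delta=\mathbb{C}$, which the lemma explicitly covers, are not handled: there $r+\Delta=\Delta$, the inclusion $r+\Delta\subseteq\Delta\setminus\Delta_{s}$ fails, and neither of your sketched workarounds closes the hole. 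Restricting to a one-parameter sub-semigroup is illegitimate for exactly the reason you cite in its favour --- a hypercyclic vector of $(T(t))_{t\in\Delta}$ need not be hypercyclic for a sub-semigroup (Example I(iii) of the paper even exhibits a hypercyclic sector semigroup with no hypercyclic single operator) --- and the ``iterate $T(t_{0})$ to push the bounded part out'' idea fails because $T(nt_{0})$ is no longer close to the identity once $n$ is large enough to force $|t+nt_{0}|>s$.

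Second, the dense-range claim for $T(r)$ is both misjustified and, for proper sectors, essentially circular. $T(r)$ is not assumed hypercyclic as a single operator, so ``every hypercyclic operator has dense range'' does not apply to it. Worse, since the orbit of $x$ is dense and $T(r)$ is continuous, one has $\overline{R(T(r))}=\overline{\{T(t+r)x : t\in\Delta\}}$; hence for $\Delta=\Delta(\alpha)$ the assertion that $T(r)$ has dense range is \emph{equivalent} to the density of $\{T(u)x : u\in r+\Delta\}$, which is precisely the conclusion you are trying to reach. For $\Delta=[0,\infty)$ this can be rescued (the orbit lies in $R(T(r))\cup\{T(t)x : 0\leq t\leq r\}$, a subspace union a compact set, and a Baire argument in an infinite-dimensional space forces the subspace to be dense), but for a sector $\Delta\setminus(r+\Delta)$ is unbounded and that decomposition is unavailable. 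A repair that works uniformly for all admissible $\Delta$ is more elementary than either approach: $\{T(t)x : t\in\Delta\setminus\Delta_{s}\}$ contains the dense orbit minus the compact set $K=\{T(t)x : t\in\Delta_{s}\}$; since a compact subset of an infinite-dimensional Fr\'echet space has empty interior, every nonempty open set $U$ satisfies $U\setminus K\neq\emptyset$, and density of the full orbit then places some $T(t)x$ with $t\notin\Delta_{s}$ inside $U$.
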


\begin{proof}
Suppose that $x^{*}\in E^{\ast}$ and $\langle x^{\ast}, T(t)x \rangle=0$ for $|t|\geq s.$ Let $t_{0}\in \Delta$ and $|t_{0}|<s.$ Then there exists a sufficiently large real number $k>0$ such that $\|kT(t_{0})x\|>\max_{t\in \Delta_{s}}\|T(t)x\|.$ By this inequality and the fact that $x$ is a hypercyclic element of $(T(t))_{t\in \Delta},$ it readily follows that there exists a sequence $(t_{n})_{n\in {\mathbb N}}$ in $\Delta \setminus \Delta_{s} $ such that $\lim_{n\rightarrow \infty}T(t_{n})x=kT(t_{0})x$ so that $0=\lim_{n\rightarrow \infty}\langle x^{\ast}, T(t_{n})x\rangle  =\langle x^{\ast}, kT(t_{0})x\rangle$ and therefore $0=\langle x^{\ast}, T(t_{0})x\rangle .$ Since $t_{0}$ was arbitrary, we get that $0=\langle x^{\ast}, T(t)x\rangle $
for all $t\in \Delta.$ This implies that $x^{\ast}=0$ since $x^{\ast}$ annulates a dense set in $E,$ finishing the proof.
\end{proof}

In the sequel, we will use the following special cases of Definition \ref{4-skins}:

\begin{definition}\label{prckojedd}
Let $q\in [1,\infty),$ and let $f : [0,\infty) \rightarrow [1,\infty)$ be an increasing mapping. Assume that $(T(t))_{t\in \Delta}$ is a $C_{0}$-semigroup. Then it is said that:
\begin{itemize}
\item[(i)] $(T(t))_{t\in \Delta}$ is $q$-frequently hypercyclic iff there exists $x\in E$ ($q$-frequently hypercyclic vector of $(T(t))_{t\in \Delta}$) such that for each open non-empty subset $V$ of $E$ we have $\underline{d}_{qc}(\{ t\in \Delta : T(t)x \in V \})>0;$
\item[(ii)] $(T(t))_{t\in \Delta}$ is $f$-frequently hypercyclic iff there exists $x\in E$ ($f$-frequently hypercyclic vector of $(T(t))_{t\in \Delta}$) such that for each open non-empty subset $V$ of $E$ we have $\underline{d}_{f}(\{ t\in \Delta : T(t)x \in V \})>0.$
\end{itemize}
It is said that $(T(t))_{t\in \Delta}$ is frequently hypercyclic iff $(T(t))_{t\in \Delta}$ is $q$-frequently hypercyclic with $q=1.$
\end{definition}

Concerning the notion introduced in Definition \ref{4-skins} (and therefore, in Definition \ref{prckojedd}), we have the following conjugacy lemma; the proof is very simple and can be left to the interested readers:

\begin{lem}\label{conjugacy}
Assume that $(T(t))_{t\in \Delta}$ is a $C_{0}$-semigroup on $E,$ and $x\in E$. Let ${\mathcal F}\in P(P(\Delta))$ and ${\mathcal F}\neq \emptyset.$ Suppose that $F$ is a separable infinite-dimensional Fr\' echet space over the same field of scalars as $E,$ and
$(S(t))_{t\in \Delta}$ is a $C_{0}$-semigroup on $F.$ If $\Psi : E \rightarrow F$ is a continuous mapping with dense range, $x$ is an ${\mathcal F}$-hypercyclic vector of $(T(t))_{t\in \Delta}$ and $\Psi \circ T(t)=S(t) \circ \Psi$ for all $t\geq 0,$ then $\Psi x$ is an ${\mathcal F}$-hypercyclic vector of $(S(t))_{t\in \Delta}.$
\end{lem}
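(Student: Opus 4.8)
The plan is to unwind the definitions and transfer the open-set condition through the intertwining map $\Psi$. Fix an ${\mathcal F}$-hypercyclic vector $x$ of $(T(t))_{t\in\Delta}$; we must show $\Psi x$ is an ${\mathcal F}$-hypercyclic vector of $(S(t))_{t\in\Delta}$, i.e. that $S(\Psi x, W):=\{t\in\Delta: S(t)\Psi x\in W\}\in{\mathcal F}$ for every open non-empty $W\subseteq F$. First I would observe that, because $\Psi$ has dense range, the preimage $\Psi^{-1}(W)$ is a non-empty open subset of $E$: it is open by continuity of $\Psi$, and it is non-empty because $W$ is open and non-empty and $\overline{\Psi(E)}=F$, so $\Psi(E)\cap W\neq\emptyset$. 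Hence $V:=\Psi^{-1}(W)$ is an admissible test set in the definition of ${\mathcal F}$-hypercyclicity for $(T(t))_{t\in\Delta}$.

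Next I would use the intertwining relation. For any $t\in\Delta$ (using the hypothesis $\Psi\circ T(t)=S(t)\circ\Psi$; note that although the statement literally says ``for all $t\ge 0$'', in the context of the index set $\Delta$ this means for all $t\in\Delta$), we have $S(t)\Psi x=\Psi T(t)x$. Therefore $S(t)\Psi x\in W$ if and only if $\Psi T(t)x\in W$, i.e. if and only if $T(t)x\in\Psi^{-1}(W)=V$. This gives the set identity
$$
S(\Psi x, W)=\bigl\{t\in\Delta: S(t)\Psi x\in W\bigr\}=\bigl\{t\in\Delta: T(t)x\in V\bigr\}=S(x,V).
$$
Since $x$ is an ${\mathcal F}$-hypercyclic vector of $(T(t))_{t\in\Delta}$ and $V$ is open and non-empty, the set $S(x,V)$ belongs to ${\mathcal F}$. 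By the displayed identity, $S(\Psi x, W)\in{\mathcal F}$ as well. As $W$ was an arbitrary open non-empty subset of $F$, this shows $\Psi x$ is an ${\mathcal F}$-hypercyclic vector of $(S(t))_{t\in\Delta}$, completing the proof.

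There is essentially no obstacle here: the only point requiring a moment's care is the non-emptiness of $\Psi^{-1}(W)$, which is exactly where the density of the range of $\Psi$ is used, and everything else is a formal manipulation of the defining condition together with the intertwining identity. In particular, no measure-theoretic input is needed, so the argument is identical whether ${\mathcal F}$ is the family defining frequent, $q$-frequent, or $f$-frequent hypercyclicity, or any other non-empty ${\mathcal F}\in P(P(\Delta))$; this is why the authors remark that the proof can be left to the reader.
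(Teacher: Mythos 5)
Your proof is correct and is exactly the standard conjugacy argument the authors have in mind when they leave the proof to the reader: pulling $W$ back to $V=\Psi^{-1}(W)$ (open by continuity, non-empty by density of the range) and using the intertwining relation to obtain the exact set identity $S(\Psi x,W)=S(x,V)$, which transfers membership in ${\mathcal F}$ without any further hypothesis on ${\mathcal F}$. No gaps; your parenthetical correction of the typo ``$t\geq 0$'' to ``$t\in\Delta$'' is also the intended reading.
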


Suppose now that ${\mathcal F}\in P(P({\mathbb N}))$ and ${\mathcal F}\neq \emptyset.$ If ${\mathcal F}$ satisfies
\begin{itemize}
\item[(I):] $A\in{\mathcal F}$ and $A\subseteq B$ imply $B\in{\mathcal F},$
\end{itemize}
then it is said that
${\mathcal F}$ is a Furstenberg family; a proper Furstenberg family ${\mathcal F}$ is any Furstenberg family satisfying that $\emptyset \notin {\mathcal F}$ (see \cite{furstenberg} for more details).

Since the $C_{0}$-semigroup $(T(t))_{t\in \Delta}$ under our consideration is locally equicontinuous, we can argue as in the proofs of \cite[Proposition 2.1]{man-peris} and \cite[Proposition 2.6]{C-DS}, where $\Delta=[0,\infty),$ to deduce the following result:

\begin{proposition}\label{sot}
Let ${\mathcal F}$ be a Furstenberg family.
Suppose that $(T(t))_{t\in \Delta}$ is a $C_{0}$-semigroup on $E$ and $(t_{n})_{n\in {\mathbb N}}\subseteq \Delta \setminus \{0\}$ is a given sequence.
If $x\in  E$ is an ${\mathcal F}$-hypercyclic element of sequence $(T(t_{n}))_{n\in {\mathbb N}},$ then $x$ is an ${\mathcal F}'$-hypercyclic element of $(T(t))_{t\in \Delta},$ where
\begin{align*}
{\mathcal F}'=\Biggl\{ B\subseteq \Delta : (\exists A\in {\mathcal F})\, (\exists \delta_{0}>0)\, \bigcup_{k\in A}[B(t_{k},\delta_{0}) \cap \Delta] \subseteq B \Biggr\}.
\end{align*}
Furthermore, if there exists
a number $t_{0} \in \Delta \setminus \{0\}$
such that $t_{n}=nt_{0}$ for all $n\in {\mathbb N},$
then
$x$ is an ${\mathcal F}''$-hypercyclic element of $(T(te^{i\arg(t_{0})}))_{t\geq 0},$ where
\begin{align*}
{\mathcal F}''=\Biggl\{ B\subseteq [0,\infty) : (\exists A\in {\mathcal F})\, (\exists \delta_{0}>0)\, \bigcup_{k\in A}[k|t_{0}|,k|t_{0}|+\delta_{0}]  \subseteq B \Biggr\}.
\end{align*}
\end{proposition}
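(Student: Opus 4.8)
The plan is to transfer ${\mathcal F}$-hypercyclicity of the discrete orbit $(T(t_n)x)_{n\in{\mathbb N}}$ to the continuous family $(T(t))_{t\in\Delta}$ by exploiting local equicontinuity. First I would fix an open non-empty subset $V$ of $E$ and a point $y\in V$; choose $n_0\in{\mathbb N}$ and $\epsilon>0$ with $L(y,2\epsilon)\subseteq V$. By local equicontinuity of $(T(t))_{t\in\Delta}$ applied on the set $\Delta_1=\{t\in\Delta:|t|\le 1\}$, there is $\delta_0\in(0,1]$ such that $d(T(s)u,T(s)v)<\epsilon$ whenever $s\in\Delta_{\delta_0}$ and $d(u,v)$ is small enough; more precisely, pick an open neighbourhood $W$ of $y$ such that $T(s)W\subseteq L(y,\epsilon)\cdot$-type control holds. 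The clean way: using the semigroup law $T(t_k+s)x=T(s)T(t_k)x$ and joint continuity, for the fixed $y$ there is $\delta_0>0$ and an open set $W\ni y$ with $T(s)W\subseteq V$ for all $s\in\Delta_{\delta_0}$.

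Now set $A:=S^{disc}(x,W)=\{k\in{\mathbb N}:T(t_k)x\in W\}$. Since $x$ is an ${\mathcal F}$-hypercyclic element of the sequence $(T(t_n))_{n\in{\mathbb N}}$ and $W$ is open non-empty, we have $A\in{\mathcal F}$. For every $k\in A$ and every $s\in\Delta_{\delta_0}$, $T(t_k+s)x=T(s)\bigl(T(t_k)x\bigr)\in T(s)W\subseteq V$, so $t_k+s\in S(x,V)$. Writing $t_k+s$ as $t_k+s$ with $s$ ranging over $\Delta_{\delta_0}$ covers exactly $B(t_k,\delta_0)\cap\Delta$ up to the standard fact that $t_k+\Delta_{\delta_0}\supseteq B(t_k,\delta_0)\cap\Delta$ when $\Delta$ is a sector (or $[0,\infty)$), which holds since $\Delta$ is convex and closed under addition. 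Hence $\bigcup_{k\in A}[B(t_k,\delta_0)\cap\Delta]\subseteq S(x,V)$, which says precisely that $S(x,V)\in{\mathcal F}'$. Since $V$ was arbitrary, $x$ is an ${\mathcal F}'$-hypercyclic element of $(T(t))_{t\in\Delta}$.

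For the second assertion, assume $t_n=nt_0$ and restrict attention to the ray $R=\{te^{i\arg(t_0)}:t\ge0\}$, on which the semigroup restricts to $(T(te^{i\arg(t_0)}))_{t\ge0}$, a one-parameter $C_0$-semigroup. Repeating the argument above, but now taking $s$ to range over real parameters in $[0,\delta_0]$ (so that $te^{i\arg(t_0)}$ with $t\in[k|t_0|,k|t_0|+\delta_0]$ equals $t_k+se^{i\arg(t_0)}$ for $s\in[0,\delta_0]$), we obtain for each $k\in A$ the inclusion $[k|t_0|,k|t_0|+\delta_0]\subseteq S^{R}(x,V)$, where $S^{R}(x,V)=\{t\ge0:T(te^{i\arg(t_0)})x\in V\}$. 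Thus $\bigcup_{k\in A}[k|t_0|,k|t_0|+\delta_0]\subseteq S^{R}(x,V)$, i.e. $S^{R}(x,V)\in{\mathcal F}''$, giving ${\mathcal F}''$-hypercyclicity of the restricted semigroup.

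The main obstacle is the first step: producing, uniformly in $k$, a single $\delta_0>0$ and neighbourhood $W$ of $y$ with $T(s)W\subseteq V$ for all $s\in\Delta_{\delta_0}$. This is exactly where local equicontinuity is essential and where one must be careful that $\delta_0$ does not depend on $k$ — it depends only on the target point $y$ and the radius $\epsilon$, via equicontinuity of $\{T(s):s\in\Delta_{\delta_0}\}$ together with the strong continuity $s\mapsto T(s)y$ at $s=0$. Everything else is the semigroup identity and the elementary geometry of sectors. Because this is the same argument as in \cite[Proposition 2.1]{man-peris} and \cite[Proposition 2.6]{C-DS} with $[0,\infty)$ replaced by $\Delta$, and local equicontinuity is assumed throughout our framework, the details go through verbatim.
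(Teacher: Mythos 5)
Your overall strategy is exactly the paper's: the paper gives no written proof, only the remark that one can ``argue as in \cite[Proposition 2.1]{man-peris}'', and your outline (strong continuity at $0$ plus local equicontinuity to get an open $W\ni y$ and $\delta_0>0$ with $T(s)W\subseteq V$ for all $s\in \Delta_{\delta_0}$, then the semigroup law $T(t_k+s)x=T(s)T(t_k)x$ applied on the set $A=\{k: T(t_k)x\in W\}\in {\mathcal F}$) is precisely that argument, and the ${\mathcal F}''$ part is handled correctly.

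There is, however, one concrete false step. You assert, as a ``standard fact,'' that $t_k+\Delta_{\delta_0}\supseteq B(t_k,\delta_0)\cap\Delta$. The inclusion goes the other way: $t_k+\Delta_{\delta_0}\subseteq B(t_k,\delta_0)\cap\Delta$, and the containment is strict in general. For example, with $\Delta=[0,\infty)$, $t_k=5$, $\delta_0=1$, one has $B(t_k,\delta_0)\cap\Delta=[4,6]$ while $t_k+\Delta_{\delta_0}=[5,6]$; the points $t\in[4,5)$ satisfy $t-t_k\notin\Delta$, so they cannot be reached by the semigroup identity $T(t)=T(t-t_k)T(t_k)$. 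Consequently your argument establishes only $\bigcup_{k\in A}\bigl(t_k+\Delta_{\delta_0}\bigr)\subseteq S(x,V)$, which does \emph{not} imply $\bigcup_{k\in A}\bigl[B(t_k,\delta_0)\cap\Delta\bigr]\subseteq S(x,V)$, i.e. it does not verify membership in ${\mathcal F}'$ as that family is literally defined. Controlling $T(t)x$ for $t\in B(t_k,\delta_0)\cap\Delta$ with $t-t_k\notin\Delta$ from the single datum $T(t_k)x\in W$ is not possible by forward application of the semigroup, so this gap cannot be closed by the same device. To be fair, the defect is latent in the statement itself: the cited argument of \cite{man-peris}, and the paper's own family ${\mathcal F}''$, use the one-sided sets $[k|t_0|,k|t_0|+\delta_0]$, and the downstream applications only need the one-sided version; but you should either prove the two-sided inclusion (you cannot, by this method) or explicitly replace $B(t_k,\delta_0)\cap\Delta$ by $t_k+\Delta_{\delta_0}$ in the definition of ${\mathcal F}'$, rather than pass over the discrepancy with an incorrect set-theoretic identity.
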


We will include the most relavant details of the proof of the following proposition for the sake of completeness.

\begin{proposition}\label{manic}
Suppose that $(T(t))_{t\in \Delta}$ is a $C_{0}$-semigroup on $E,$ $(t_{n})_{n\in {\mathbb N}}\subseteq \Delta \setminus \{0\}$ is a given sequence, $\delta >0,$
\begin{align}\label{krke-rke}
\bigl| t_{n}-t_{m}\bigr|\geq \delta\mbox{  for  }m\neq n,
\end{align}
$f : [0,\infty) \rightarrow [1,\infty)$ is an increasing mapping and $m_{k}=f(k),$ $k\in {\mathbb N}_{0}.$
Suppose that $g : [1,\infty) \rightarrow [1,\infty)$ is an increasing mapping and $|t_{n}|\leq g(n)$ for all $n\in {\mathbb N}.$
If $x\in  E$ is an
l-$(m_{n})$-hypercyclic element
of sequence $(T(t_{n}))_{n\in {\mathbb N}},$ then $x$ is a
$(g\circ f)$-frequently hypercyclic element of $(T(t))_{t\in \Delta}$. Furthermore, we have the following:
\begin{itemize}
\item[(i)] If $x$ is a frequently hypercyclic element of sequence $(T(t_{n}))_{n\in {\mathbb N}}$ ($f(t)=t+1,$ $t\geq 0$), then $x$ is a $g$-frequently hypercyclic element of $(T(t))_{t\in \Delta}.$
\item[(ii)] If $q\geq 1$ and $x$ is a $q$-frequently hypercyclic element of sequence $(T(t_{n}))_{n\in {\mathbb N}}$ ($f(t)=t^{q}+1,$ $t\geq 0$), then $x$ is a $(g\circ (\cdot^{q}+1))$-frequently hypercyclic element of $(T(t))_{t\in \Delta}.$
\item[(iii)] If there exists
a number $t_{0} \in \Delta \setminus \{0\}$
such that $t_{n}=nt_{0}$ for all $n\in {\mathbb N},$ and $x\in E$ is a frequently hypercyclic ($q$-frequently hypercyclic,
l-$(m_{n})$-hypercyclic) element of sequence $(T(t_{n}))_{n\in {\mathbb N}},$
then $x$ is a frequently hypercyclic ($q$-frequently hypercyclic,
$f$-hypercyclic) element of $(T(t))_{t\in \Delta}$ and $(T(te^{i\arg(t_{0})}))_{t\geq 0}$.
\end{itemize}
\end{proposition}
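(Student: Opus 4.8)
The plan is to transfer the combinatorial density information on the index set $\{t_n : n \in \mathbb{N}\}$ to a measure-theoretic density statement on the sector $\Delta$. First I would fix an open non-empty set $V \subseteq E$ and let $x$ be the given l-$(m_n)$-hypercyclic element of $(T(t_n))_{n\in\mathbb{N}}$. By continuity of each $T(t)$ and the semigroup property, there is a $\delta_0 \in (0, \delta/2)$ and an open non-empty $W \subseteq E$ such that $T(s)W \subseteq V$ whenever $s \in \Delta_{\delta_0}$; this uses local equicontinuity of $(T(t))_{t\in\Delta}$ exactly as in the proof of Lemma~\ref{conjugacy} and the proofs being imitated from \cite{man-peris}, \cite{C-DS}. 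The set $A := \{ n \in \mathbb{N} : T(t_n)x \in W\}$ then satisfies $\underline{d}_{m_n}(A) > 0$, and for every $n \in A$ and every $t$ in the ball $B(t_n, \delta_0)\cap\Delta$ we have $T(t)x = T(t - t_n)T(t_n)x \in T(t-t_n)W \subseteq V$ (writing $t = t_n + (t-t_n)$ with $t - t_n \in \Delta_{\delta_0}$; if $\Delta$ is a proper sector one first shrinks $W$ so this holds for all admissible increments).

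The second step is the quantitative count. The separation hypothesis \eqref{krke-rke} guarantees that the balls $B(t_n,\delta_0)\cap\Delta$ for distinct $n$ are pairwise disjoint (since $2\delta_0 < \delta$), and each such ball contributes a fixed positive amount of Lebesgue measure $c := m(B(0,\delta_0)\cap\Delta) > 0$ — note $c$ is the same for every $n$ by translation invariance of $m$, up to the boundary effects of the sector, which only help (one may take the constant associated to a ball well inside the sector, or simply observe the measure is bounded below by a positive constant depending only on $\delta_0$ and $\alpha$). Hence, for any $t > 0$,
\begin{align*}
m\bigl(S(x,V)\cap \Delta_{(g\circ f)(t)}\bigr) \geq \sum_{\substack{n\in A \\ |t_n| \leq g(f(t)) - \delta_0}} m\bigl(B(t_n,\delta_0)\cap\Delta\bigr) \geq c \cdot \bigl| A \cap \{ n : |t_n| \leq g(f(t)) - \delta_0\}\bigr|.
\end{align*}
Using $|t_n| \leq g(n)$ and monotonicity of $g$, the condition $g(n) \leq g(f(t)) - \delta_0$ is implied by $n \leq f(t) - 1 =: m_{?}$-type bound; more precisely $|t_n| \leq g(f(\lfloor t \rfloor))$ already holds once $n \leq f(\lfloor t\rfloor) = m_{\lfloor t\rfloor}$, so the count is at least $|A \cap [1, m_{\lfloor t \rfloor}]|$ minus a bounded correction for the $\delta_0$-shift and the floor. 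Dividing by $t$ and taking $\liminf_{t\to\infty}$, the definition of $\underline{d}_{m_n}(A)$ gives $\underline{d}_{g\circ f}(S(x,V)) \geq c \cdot \underline{d}_{m_n}(A) > 0$, which is precisely $(g\circ f)$-frequent hypercyclicity.

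Parts (i) and (ii) are then immediate specializations: taking $f(t) = t+1$ makes $m_n = n+1$, so l-$(m_n)$-hypercyclicity of the sequence is frequent hypercyclicity and one gets a $(g\circ(\cdot+1))$-frequently hypercyclic vector, which since $g\circ(\cdot+1)$ differs from $g$ only by an inner shift still yields $g$-frequent hypercyclicity after absorbing the shift into the $\liminf$ (or one simply redefines $g$); similarly $f(t) = t^q+1$ gives (ii). For part (iii), when $t_n = n t_0$ the balls $B(n t_0, \delta_0)\cap\Delta$ lie along the ray $R = \{ s e^{i\arg t_0} : s \geq 0\}$, and restricting attention to that ray — i.e. working with the one-parameter semigroup $(T(t e^{i\arg t_0}))_{t\geq 0}$ on the parameter interval $[0,\infty)$ with one-dimensional Lebesgue measure — the same counting argument with intervals $[n|t_0|, n|t_0| + \delta_0]$ in place of balls shows the density is preserved without the lossy factor coming from $g$, since $|t_n| = n|t_0|$ is now exactly linear in $n$; here $\mathcal{F}''$ from Proposition~\ref{sot} can be invoked directly rather than redoing the estimate.

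The main obstacle I anticipate is handling the boundary of a genuine sector $\Delta(\alpha)$ correctly in two places: ensuring the increment $t - t_n$ actually lands in $\Delta$ (so that $T(t-t_n)$ is defined) for all $t$ in the ball around $t_n$, and ensuring the measure lower bound $c$ is uniform in $n$. Both are resolved by replacing the full ball $B(t_n,\delta_0)\cap\Delta$ with a fixed small \emph{cone-shaped} neighbourhood contained in $t_n + \Delta$ whose Lebesgue measure is a positive constant independent of $n$ (possible because $\Delta$ is a convex cone, so $t_n + \Delta_{\delta_0} \subseteq \Delta$ always); this is the only point requiring genuine care, and it is exactly the kind of geometric adjustment the authors signal when they say the argument follows \cite{man-peris} ``with minor modifications.''
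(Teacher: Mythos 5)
Your proposal is correct and follows essentially the same route as the paper's proof: extract $\lfloor cn\rfloor$ indices in $[1,m_{n}]$ whose orbit points land in a shrunken copy $W$ of $V$, use local equicontinuity and the $\delta$-separation \eqref{krke-rke} to attach pairwise disjoint neighbourhoods of uniform positive measure on which $T(\cdot)x$ stays in $V$, bound $|t_{l_j}|$ by $(g\circ f)(n)$ to place them in $\Delta_{(g\circ f)(n)}$, and obtain (i)--(iii) by specialization and Proposition \ref{sot}. You are in fact slightly more explicit than the paper on the one delicate geometric point --- replacing $B(t_n,\delta_0)\cap\Delta$ by the cone neighbourhood $t_n+\Delta_{\delta_0}$ so that the increment $t-t_n$ remains in $\Delta$ --- which the paper dispatches by reference to \cite{man-peris}.
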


\begin{proof}
Let
$x\in  E$ be an l-$(m_{n})$-hypercyclic element
of sequence $(T(t_{n}))_{n\in {\mathbb N}}.$ In order to prove that $x$ is an
$f$-frequently hypercyclic element of $(T(t))_{t\in \Delta},$ let an open non-empty set $V$ of $E$ be given. The prescribed assumptions yield the existence of a positive constant $c>0$ and a strictly increasing sequence $(n_{k})_{k\in {\mathbb N}}$  in ${\mathbb N}$ such that
the interval $[1,m_{n_{k}}]$ contains at least $\lfloor cn_{k} \rfloor$ elements of set $\{n\in {\mathbb N} : T(t_{n})x\in V\},$ say $l_{1},\cdot \cdot \cdot ,l_{\lfloor cn_{k} \rfloor}.$
Hence, $T(t_{l_{1}})x\in V,\cdot \cdot \cdot ,T(t_{l_{\lfloor cn_{k} \rfloor}})x\in V,$ $|t_{l_{j}}|\leq g(m_{n_{k}})=(g\circ f)(n_{k})$ for $1\leq j\leq \lfloor cn_{k} \rfloor$ and
$|t_{l_{j}}-t_{l_{j'}}|\geq \delta,$
provided $l_{j}\neq l_{j'}$ and $1\leq j,\ j'\leq \lfloor cn_{k} \rfloor.$
Arguing similarly as in the proof of \cite[Proposition 2.1]{man-peris}, the strong continuity and local equicontinuity of $(T(t))_{t\in \Delta}$ together imply the existence of a positive real number $\delta \in (0,1)$ such that $T(t)x\in V$ for any $t\in \bigcup_{1\leq j\leq \lfloor cn_{k} \rfloor}B(t_{l_{j}},\delta).$
This simply implies the first
conclusion. The parts (i)-(ii) simply follow from this fact, while the proof of (iii) can be deduced by using (i)-(ii) and Proposition \ref{sot}.
\end{proof}

We continue by stating the following existence type result (a similar statement can be formulated for single operators, as well):

\begin{proposition}\label{markknop}
Assume that $(T(t))_{t\in \Delta}$ is a hypercyclic $C_{0}$-semigroup. Then there exists an increasing mapping $f : [0,\infty) \rightarrow [1,\infty)$ such that $(T(t))_{t\in \Delta}$ is $f$-frequently hypercyclic and, for every open non-empty subset $V$ of $E,$ we have
\begin{align}\label{tommyger}
\underline{d}_{f}(\{ t\in \Delta : T(t)x \in V \})=+\infty.
\end{align}
\end{proposition}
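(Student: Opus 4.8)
The plan is the following. Fix once and for all a hypercyclic vector $x$ of $(T(t))_{t\in\Delta}$ and, using separability of $E$, a countable base $(V_{k})_{k\in{\mathbb N}}$ consisting of nonempty open subsets of $E$. Since $\underline{d}_{f}$ is monotone under inclusion and every nonempty open $V$ contains some $V_{k}$, with $S(x,V)\supseteq S(x,V_{k})$, it is enough to build a single increasing $f:[0,\infty)\to[1,\infty)$ with $\underline{d}_{f}(S(x,V_{k}))=+\infty$ for all $k$; then $x$ itself is the required $f$-frequently hypercyclic vector and \eqref{tommyger} holds for every open nonempty $V$.

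The core step is to prove that \emph{$m(S(x,V))=+\infty$ for every nonempty open $V\subseteq E$}. I would argue as follows. Choose $z\in V$ and $\eta>0$ with $L(z,2\eta)\subseteq V$, and set $V':=L(z,\eta)$, so that $y\in V'$ and $d(w,y)<\eta$ force $w\in V$. By strong continuity the set $\{h\in\Delta:T(h)z\in V'\}$ is open in $\Delta$ and contains $0$, hence contains $\Delta_{\rho}$ for some $\rho>0$; by local equicontinuity of the semigroup there is $\delta>0$ with $T(h)L(0,\delta)\subseteq L(0,\eta)$ for every $h\in\Delta_{\rho}$. Applying Lemma~\ref{lemara} with $s=j$, pick $t_{j}\in\Delta$ with $|t_{j}|>j$ and $d(T(t_{j})x,z)<\delta$, so $|t_{j}|\to\infty$ and $T(t_{j})x\to z$. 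For all large $j$ and every $h\in\Delta_{\rho}$, since $\Delta$ is a convex cone we have $t_{j}+h\in\Delta$, $T(t_{j}+h)=T(h)T(t_{j})$, and
\[
T(t_{j}+h)x = T(h)z + T(h)\bigl(T(t_{j})x-z\bigr),
\]
whose first summand lies in $V'$ while $d\bigl(T(t_{j}+h)x,T(h)z\bigr)=\|T(h)(T(t_{j})x-z)\|<\eta$; by the choice of $V'$ and $\eta$ this gives $T(t_{j}+h)x\in V$. Thus $t_{j}+\Delta_{\rho}\subseteq S(x,V)$ for all large $j$. Each such set has measure $m(\Delta_{\rho})>0$ (positive in every case for $\Delta$ here), and after passing to a subsequence along which consecutive moduli $|t_{j}|$ increase by more than $2\rho$ these sets are pairwise disjoint, whence $m(S(x,V))\ge\sum_{j}m(\Delta_{\rho})=+\infty$.

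It follows that $a_{k}(R):=m(S(x,V_{k})\cap\Delta_{R})$ is nondecreasing with $a_{k}(R)\to+\infty$ as $R\to\infty$, so the remaining step is a routine diagonalization. For each $k$ put $\varphi_{k}(u):=\inf\{R\ge1:a_{k}(R)\ge u^{2}\}$, which is finite and nondecreasing, and define $f(t):=1+t+\max_{1\le k\le\lceil t\rceil}\varphi_{k}(t)$ (empty maximum $:=0$). Then $f:[0,\infty)\to[1,\infty)$ is increasing, and for each fixed $k$ and all $t\ge k$ we have $f(t)>\varphi_{k}(t)$, hence $m(S(x,V_{k})\cap\Delta_{f(t)})=a_{k}(f(t))\ge t^{2}$, so $m(S(x,V_{k})\cap\Delta_{f(t)})/t\ge t\to\infty$ and therefore $\underline{d}_{f}(S(x,V_{k}))=+\infty$. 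By the reduction of the first paragraph this finishes the proof.

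I expect the only genuine obstacle to be the estimate $m(S(x,V))=+\infty$: one must exhibit inside $S(x,V)$ infinitely many pairwise disjoint pieces whose measures are bounded below by a fixed positive constant. The device that makes this possible is to first shrink $V$ to $V'$ and then absorb the error term $T(h)\bigl(T(t_{j})x-z\bigr)$ \emph{uniformly in} $h\in\Delta_{\rho}$ via local equicontinuity, which is precisely why the windows $t_{j}+\Delta_{\rho}$ have a common positive measure. The reduction to a countable base, the monotonicity of $\underline{d}_{f}$, and the diagonal construction of $f$ are all routine.
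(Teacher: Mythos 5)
Your proof is correct, but it is organized rather differently from the paper's. The paper discretizes: using Lemma~\ref{lemara} it produces, for each member $O_{n}$ of a countable base, a $1$-separated family of times $(t_{n,k})_{k}$ with $T(t_{n,k})x\in O_{n}$, interleaves all of these into a single sequence via a pairing bijection $F:{\mathbb N}^{2}\to{\mathbb N}$, chooses $f=g\circ f_{0}$ with $f_{0}$ defined inductively so that $[1,f(N)]$ captures at least $N^{2}$ hitting indices for each of the first $N$ basic sets, and then invokes Proposition~\ref{manic} to pass from l-$(m_{N})$-hypercyclicity of the sequence $(T(t_{F^{-1}(N)}))_{N}$ back to $f$-frequent hypercyclicity of the semigroup. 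You instead stay entirely in continuous time: your key step is the clean measure-theoretic statement $m(S(x,V))=+\infty$, obtained by fattening each hitting time $t_{j}$ into a window $t_{j}+\Delta_{\rho}$ of fixed positive measure (shrinking $V$ to $V'$ and absorbing the error $T(h)(T(t_{j})x-z)$ uniformly in $h\in\Delta_{\rho}$ by local equicontinuity -- this is exactly the mechanism hidden inside the proof of Proposition~\ref{manic}), and you then build $f$ directly as a generalized inverse of the cumulative measure functions $a_{k}(R)=m(S(x,V_{k})\cap\Delta_{R})$ rather than through the pairing function. All the individual steps check out: $S(x,V_{k})$ is open hence measurable, the windows are pairwise disjoint after passing to a subsequence with $|t_{j+1}|>|t_{j}|+2\rho$, the function $f(t)=1+t+\max_{1\le k\le\lceil t\rceil}\varphi_{k}(t)$ is increasing because each $\varphi_{k}$ is nondecreasing and the max is over a growing index set, and $f(t)>\varphi_{k}(t)$ for $t\ge k$ gives $a_{k}(f(t))\ge t^{2}$ by monotonicity of $a_{k}$ (no right-continuity needed). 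What your route buys is self-containedness and transparency -- it isolates the statement $m(S(x,V))=+\infty$, which is of independent interest, and avoids the combinatorics of the pairing function; what the paper's route buys is reusability, since the same discretization plus Proposition~\ref{manic} is the engine behind several other results there (e.g.\ Proposition~\ref{mjuzika}).
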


\begin{proof}
Let $(O_{n})_{n\in {\mathbb N}}$ be a base of the topology on $E,$ where $O_{n}\neq \emptyset$ for all $n\in {\mathbb N}.$ Assume that $x\in E$ is a hypercyclic vector of $(T(t))_{t\in \Delta}.$ Using Lemma \ref{lemara} as well as the hypercyclicity and strong continuity of $(T(t))_{t\in \Delta},$ we can simply prove that for each $n\in {\mathbb N}$ there exists a sequence $(t_{n,k})_{k\in {\mathbb N}}$ in $\Delta$ such that $T(t_{n,k})x\in O_{n}$ for all $k\in {\mathbb N}$ as well that $|t_{n,k}-t_{n',k'}|\geq 1$ for all $(n,k),\ (n',k')\in {\mathbb N}^{2}$ such that $(n,k)\neq (n',k').$ Define $F : {\mathbb N}^{2} \rightarrow {\mathbb N}$ by
$$
F(n,k):=\frac{(n+k-2)(n+k-1)}{2}+n,\quad (n,k)\in {\mathbb N}^{2}.
$$
Then $F(\cdot,\cdot)$ is a bijection together with its inverse mapping $F^{-1} : {\mathbb N} \rightarrow {\mathbb N}^{2}.$ We define $f_{0} : [0,\infty) \rightarrow [1,\infty)$
by $f_{0}(x):=f_{0}(\lceil x \rceil)$ for all $x\geq 0$, where the sequence $(f_{0}(N))_{N\in {{\mathbb N}_{0}}}$ is defined inductively as follows: Set $f_{0}(0):=1.$ For any $N\in {\mathbb N},$ we define $f_{0}(N)$ such that $f_{0}(N)>f_{0}(N-1)+1+\max_{1\leq j\leq N,1\leq k\leq N^{2}}F(n,k).$ It is clear that there exists an increasing mapping $g : [1,\infty) \rightarrow [1,\infty)$ such that $|t_{F^{-1}(N)}|\leq g(N)$ for all $N\in {\mathbb N}.$ Set $f:=g\circ f_{0}.$
We will prove that $(T(t))_{t\in \Delta}$ is $f$-frequently hypercyclic and \eqref{tommyger} holds
by applying Proposition \ref{manic} with the sequence $(T(t_{N}))_{N\in {\mathbb N}},$ where $t_{N}:=t_{F^{-1}(N)}$ for all $n\in {\mathbb N}.$
Evidently, the condition \eqref{krke-rke} holds with $\delta=1$ and $|t_{N}|\leq g(N)$ for all $N\in {\mathbb N}.$ Let $V$ be an open non-empty subset $V$ of $E.$ It is sufficient to prove that $x$ is an
l-$(m_{N})$-hypercyclic vector of the sequence $(T(t_{N}))_{N\in {\mathbb N}},$
as well as that for each $n\in {\mathbb N}$ there exists a strictly increasing sequence $(N_{n}^{k})_{k\in {\mathbb N}}$ of positive integers such that the interval $[1,f(N_{n}^{k})]$ contains at least $(N_{n}^{k})^{2}$ positive integers $m\in {\mathbb N}$ such that $F^{-1}(m)=(n,k)$ for some $k\in {\mathbb N}.$ But, this simply follows with the sequence $(N_{n}^{k}\equiv F(n,k))_{k\in {\mathbb N}},$ by using the fact that $f(N)>F(n,N^{2})$ for $1\leq n\leq N.$
\end{proof}

The following result is a simple consequence of \cite[Theorem 2.5]{C-DS}, in the case that $\Delta =[0,\infty).$ The proof in the case of general region $\Delta$ is similar and therefore omitted:

\begin{theorem}\label{oma}
Assume that $(T(t))_{t\in \Delta}$ is a $C_{0}$-semigroup on $E$ and $f : [0,\infty) \rightarrow [1,\infty)$ is an increasing mapping. Define $m_{k}:=f(k),$ $k\in {\mathbb N}_{0}.$
Suppose that there are a number $t_{0}\in \Delta \setminus \{0\},$ a dense subset $E_{0}$ of $E$ and mappings $S_{n} : E_{0} \rightarrow E$ ($n\in {\mathbb N}$) such that
the following conditions hold for all $y\in E_{0}$:
\begin{itemize}
\item[(i)] The sequence $\sum_{n=1}^{k}T(t_{0}\lfloor m_{k}\rfloor)S_{\lfloor m_{k-n}\rfloor}y, k\in\mathbb N$ converges unconditionally, uniformly in $k\in {\mathbb N}.$
\item[(ii)] The series $\sum_{n=1}^{\infty}T(t_{0}\lfloor m_{k}\rfloor)S_{\lfloor m_{k+n} \rfloor}y$ converges unconditionally, uniformly in $k\in {\mathbb N}.$
\item[(iii)] The series $\sum_{n=1}^{\infty}S_{\lfloor m_{n} \rfloor}y$ converges unconditionally.
\item[(iv)] $\lim_{n\rightarrow \infty}T(t_{0}\lfloor m_{n} \rfloor)S_{\lfloor m_{n} \rfloor}y=y.$
\end{itemize}
Then $(T(t))_{t\in \Delta}$ is $f$-frequently hypercyclic, $(T(te^{i\arg(t_{0})}))_{t\geq 0}$ is $f$-frequently hypercyclic and the operator $T(t_{0})$ is l-$(m_{k})$-frequently hypercyclic.
\end{theorem}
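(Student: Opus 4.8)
The plan is to isolate the statement about the single operator $T(t_0)$ — that conditions (i)--(iv) produce a vector which is l-$(m_k)$-frequently hypercyclic for $T(t_0)$ — and then to transport it to the whole semigroup $(T(t))_{t\in\Delta}$ and to the ray $R$ through $0$ and $t_0$ by a single application of Proposition \ref{manic}(iii). Since $T(t_0)^{n}=T(nt_0)$, hence $T(t_0)^{\lfloor m_k\rfloor}=T(t_0\lfloor m_k\rfloor)$, hypotheses (i)--(iv) are precisely the assumptions of the $f$-Frequent Hypercyclicity Criterion behind \cite[Theorem 2.5]{C-DS}, applied to $T:=T(t_0)$ with the right-inverse type maps $S_n$ on the dense set $E_0$; for $\Delta=[0,\infty)$ this is literally that result, and for a general sector only the transfer step below is new.

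For completeness, I recall the mechanism behind the single-operator step. Fix a dense sequence $(y_j)_{j\in\mathbb N}$ in $E_0$ and, by the classical combinatorial lemma (see \cite{bay1}, \cite{boni}), choose pairwise disjoint sets $A_j\subseteq\mathbb N$ of positive lower density, with mutual separation increasing in $j$ and with $\min A_j$ sufficiently large, and set
\[
x:=\sum_{j\ge 1}\ \sum_{n\in A_j}S_{\lfloor m_n\rfloor}y_j .
\]
Unconditional convergence of this double series follows from (iii) and the late start of the $A_j$. For $n_0\in A_{j_0}$, linearity and continuity of $T(t_0\lfloor m_{n_0}\rfloor)$ decompose $T(t_0\lfloor m_{n_0}\rfloor)x$ into the diagonal term $T(t_0\lfloor m_{n_0}\rfloor)S_{\lfloor m_{n_0}\rfloor}y_{j_0}$, tending to $y_{j_0}$ by (iv), plus a ``past'' sum over indices $n<n_0$ bounded uniformly via (i) and a ``future'' sum over $n>n_0$ bounded uniformly via (ii), the separation of the $A_j$ making the off-diagonal contributions summable. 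Hence $T(t_0\lfloor m_{n_0}\rfloor)x$ lies in any prescribed neighbourhood of $y_{j_0}$ for large $n_0$, so for every nonempty open $V\subseteq E$ the set $\{k\in\mathbb N: T(t_0)^{\lfloor m_k\rfloor}x\in V\}$ contains a cofinite subset of some $A_{j_0}$, whence $\underline{d}_{m_{k}}(\{p\in\mathbb N: T(t_0)^{p}x\in V\})>0$; this is the l-$(m_k)$-frequent hypercyclicity of $T(t_0)$.

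Now apply Proposition \ref{manic}(iii) with $t_n:=nt_0$, so that $(T(t_n))_{n\in\mathbb N}=(T(t_0)^n)_{n\in\mathbb N}$, $m_k=f(k)$, and \eqref{krke-rke} holds with $\delta=|t_0|$. Since $x$ is an l-$(m_n)$-hypercyclic vector of $(T(t_n))_{n\in\mathbb N}$, Proposition \ref{manic}(iii) immediately yields that $x$ is an $f$-frequently hypercyclic vector of $(T(t))_{t\in\Delta}$ and of $(T(te^{i\arg(t_0)}))_{t\ge 0}$; together with the l-$(m_k)$-frequent hypercyclicity of $T(t_0)$ already obtained, this is exactly the three conclusions. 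Nothing in the argument used the shape of $\Delta$, which explains why the general region needs no separate treatment.

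The real obstacle is the single-operator step: reconciling the three uniform estimates for $T(t_0\lfloor m_{n_0}\rfloor)x$ with convergence of the series defining $x$ forces the delicate simultaneous requirements on the $A_j$ — positive lower density, separation growing with $j$, and starting points late enough that, for the seminorm and accuracy coming from the given $V$, the total off-diagonal error falls below the threshold. A secondary nuisance is the behaviour of the floors $\lfloor m_n\rfloor$ when $f$ is only non-decreasing, which must be watched when passing from $(m_n)$-density of index sets to l-$(m_k)$-density of the orbit. Past this (essentially known) construction, everything — including the passage to complex sectors via Proposition \ref{manic} — is routine.
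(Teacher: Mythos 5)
Your proposal is correct and follows essentially the same route as the paper: the paper disposes of Theorem \ref{oma} by citing the single-operator/half-line criterion (\cite[Theorem 2.5]{C-DS}, i.e.\ the standard Frequent Hypercyclicity Criterion construction with the disjoint sets $A_j$ of positive lower density that you reproduce) and then treats the sector case as "similar", which is precisely the transfer you make explicit via Proposition \ref{manic}(iii) with $t_n=nt_0$ — the same two-step pattern the paper itself spells out in the proof of Theorem \ref{oma-niz}. The floor-collapsing issue you flag is real but is also left implicit in the paper (and is effectively ruled out for nonzero $y$ by combining hypotheses (ii) and (iv)), so it does not constitute a gap relative to the paper's own argument.
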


Setting $f(t):=t^{q}+1,$ $t\geq 0$ ($q\geq 1$), we obtain a sufficient condition for $q$-frequent hypercyclicity of $(T(t))_{t\in \Delta},$  $(T(te^{i\arg(t_{0})}))_{t\geq 0}$ and $T(t_{0}).$

In the following theorem, we state $f$-Frequent Hypercyclic Criterion for $C_{0}$-semigroups on complex sectors.

\begin{theorem}\label{f-fhc}
Assume that $(T(t))_{t\in \Delta}$ is a $C_{0}$-semigroup on $E,$ $f : [0,\infty) \rightarrow [1,\infty)$ is an increasing mapping and $m_{k}=f(k),$ $k\in {\mathbb N}_{0}.$ Set
$A_{k,n}^{f}:=[\lfloor m_{k+n} \rfloor -\lfloor m_{k} \rfloor -1, \lfloor m_{k+n} \rfloor -\lfloor m_{k} \rfloor ]$ ($k,\ n\in {\mathbb N}$)
and $B_{k,n}^{f}:=[\lfloor m_{k} \rfloor -\lfloor m_{k-n} \rfloor , \lfloor m_{k} \rfloor -\lfloor m_{k-n} \rfloor +1]$ ($k,\ n\in {\mathbb N},$ $k\geq n$).
Suppose that there exists a dense subset $E_{0}$ of $E$ such that $T(t)E_{0}\subseteq E_{0},$ $t\in \Delta$ and mappings $S(t) : E_{0}\rightarrow E_{0}$ ($t\in \Delta$) such that:
\begin{itemize}
\item[(i)] $T(t)S(t)x=x,$ $x\in E_{0},$ $t\in \Delta$ and $S(r)T(t)x=T(t)S(r)x=S(r-t)x,$ $x\in E_{0}$ if $r,\ t\in \Delta \setminus \{0\},$ $\arg(r)=\arg(t)$ and $|r|>|t|;$
\item[(ii)] For every $t_{0}\in \Delta \setminus \{0\}$ and $s\geq 0,$ the sequence $\sum_{n=1}^{k}\int_{s+B_{k,n}^{f}}T(tt_{0}e^{i\arg(t_{0})})x\, dt$, $n\in \mathbb N$ converges unconditionally, uniformly in $k\in {\mathbb N},$
for every $x\in E_{0}$.
\item[(iii)]  For every $t_{0}\in \Delta \setminus \{0\},$ the mapping $t\mapsto S(tt_{0}e^{i\arg(t_{0})})x,$ $t\in A_{k,n}^{f}$ is Bochner integrable for $k,\ n\in {\mathbb N},$ $x\in E_{0},$ and the series $\sum_{n=1}^{\infty}\int_{A_{k,n}^{f}}S(tt_{0}e^{i\arg(t_{0})})x\, dt$ converges unconditionally for $x\in E_{0}$, uniformly in $k\in {\mathbb N}.$
\end{itemize}
Then $(T(t))_{t\in \Delta}$ is $f$-frequently hypercyclic and, for every $t_{0}\in \Delta \setminus \{0\},$ $(T(te^{i\arg(t_{0})}))_{t\geq 0}$ is $f$-frequently hypercyclic and the operator $T(t_{0})$ is l-$(m_{k})$-frequently hypercyclic.
\end{theorem}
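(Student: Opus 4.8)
The plan is to produce, for a fixed $t_0\in\Delta\setminus\{0\}$, one vector $x\in E$ that is simultaneously an $f$-frequently hypercyclic vector of $(T(t))_{t\in\Delta}$, of the ray semigroup $(T(te^{i\arg(t_0)}))_{t\geq 0}$, and --- after passing to the integers $\lfloor m_k\rfloor$ --- an l-$(m_k)$-hypercyclic vector of the operator $T(t_0)$; this is the $C_0$-semigroup analogue, on complex sectors and with the $(m_k)$-densities in place of the natural density, of the Bayart--Grivaux/Mangino--Peris construction behind the Frequent Hypercyclicity Criterion, organised through Bochner integrals over unit intervals so as to dispense with Pettis integrability. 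First I would fix a dense sequence $(y_j)_{j\in\mathbb{N}}$ in $E_0$ and invoke a combinatorial selection lemma producing pairwise disjoint sets $A(j,\nu)\subseteq\mathbb{N}$, $j,\nu\in\mathbb{N}$, such that each tail $\bigcup_{\nu\geq\nu_0}A(j,\nu)$ has positive lower density, $\min A(j,\nu)\to\infty$ fast enough in $(j,\nu)$, and --- whenever $k\in A(j,\nu)$, $k'\in A(j',\nu')$, $k\neq k'$ --- one has $|\lfloor m_k\rfloor-\lfloor m_{k'}\rfloor|\geq\max\{\nu,\nu'\}$; in particular the numbers $\lfloor m_k\rfloor$, $k\in\bigcup_{j,\nu}A(j,\nu)$, are pairwise distinct. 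This is the classical selection lemma with the counting function $n\mapsto n$ replaced by $n\mapsto\lfloor m_n\rfloor$, the monotonicity and (essential) unboundedness of $f$ keeping everything consistent.

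Next I would put
\[
x:=\sum_{j\in\mathbb{N}}\ \sum_{\nu\in\mathbb{N}}\ \sum_{k\in A(j,\nu)}\ \int_{[\lfloor m_k\rfloor-1,\ \lfloor m_k\rfloor]}S(\tau t_0)\,y_j\,d\tau ,
\]
all inner integrals being Bochner integrals by hypothesis (iii). Hypothesis (iii) also shows that $\sum_{k}\int_{[\lfloor m_k\rfloor-1,\lfloor m_k\rfloor]}S(\tau t_0)y_j\,d\tau$ converges unconditionally, so, $\min A(j,\nu)$ being large, the norm of each partial group $\sum_{k\in A(j,\nu)}\int_{[\lfloor m_k\rfloor-1,\lfloor m_k\rfloor]}S(\tau t_0)y_j\,d\tau$ is small and these norms are summable over $(j,\nu)$; hence $x$ is a well-defined element of $E$, with a seminorm bound in terms of the tails appearing in (ii) and (iii).

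The core is the error analysis. Fix $j,\nu$, choose $k_0\in A(j,\nu)$, and compute $T(t)x$ for $t=(\lfloor m_{k_0}\rfloor+\theta)t_0$ with $\theta\geq 0$ small (the computation for $t$ in a small ball of $\Delta$ about $\lfloor m_{k_0}\rfloor t_0$ is the same). Applying $T(t)$ termwise and using the pseudo-inverse identities of (i) --- which turn $T(\tau t_0)S(\sigma t_0)$ into $S((\sigma-\tau)t_0)$ when $\sigma>\tau$ and into $T((\tau-\sigma)t_0)$ when $\tau>\sigma$ --- one decomposes $T(t)x$ into three parts: the diagonal part $\int_{\theta}^{1+\theta}T(rt_0)y_j\,dr$, which is within $o_\theta(1)$ of the fixed vector $z_j:=\int_0^1 T(rt_0)y_j\,dr$; a ``future'' part (contributions of indices $k>k_0$), each of whose summands is an integral of $\tau\mapsto S(\tau t_0)y_{j'}$ over a translate of some $A_{k_0,n}^{f}$, so that by (iii) and the separation bound its total seminorm is at most $\varepsilon(\nu)$ with $\varepsilon(\nu)\to 0$; and a ``past'' part ($k<k_0$), each summand an integral of $\tau\mapsto T(\tau t_0)y_{j'}$ over $\theta+B_{k_0,n}^{f}$ for an appropriate $n$, controlled the same way by (ii). Thus, for any neighbourhood $W$ of $z_j$, there is $\nu_0$ so that $T(t)x\in W$ whenever $\nu\geq\nu_0$, $k\in A(j,\nu)$ and $t\in B(\lfloor m_k\rfloor t_0,\delta)\cap\Delta$, where $\delta>0$ is fixed and small. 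The density bookkeeping is then routine: the balls $B(\lfloor m_k\rfloor t_0,\delta)$ are pairwise disjoint by the separation bound and each has fixed positive two-dimensional measure, so $m\bigl(S(x,W)\cap\Delta_{f(r)}\bigr)\gtrsim\#\{k\in\bigcup_{\nu\geq\nu_0}A(j,\nu):\lfloor m_k\rfloor\,|t_0|\leq f(r)\}$, which is $\gtrsim r$ by the choice of the $A(j,\nu)$ and monotonicity of $f$ (one may, for the statement about $(T(t))_{t\in\Delta}$, take $|t_0|=1$); the analogous one-dimensional count along the ray, and the count of the integers $\lfloor m_k\rfloor$ themselves (for which $T(t_0)^{\lfloor m_k\rfloor}x\in W$), give the statements for $(T(te^{i\arg(t_0)}))_{t\geq 0}$ and for $T(t_0)$. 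It remains to know that the $z_j$ are dense in $E$; once that is granted, every nonempty open $V\subseteq E$ contains some $z_j$ with a neighbourhood $W\subseteq V$, and the proof is finished.

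I expect two steps to carry the real weight. The first is the combinatorial selection lemma in the $(m_k)$-setting, together with the precise form of the separation bound: it must simultaneously force the convergence of the series defining $x$ and the smallness of the ``future''/``past'' tails, while keeping those tails literally indexed by the prescribed intervals $A_{k,n}^{f}$, $B_{k,n}^{f}$ of the statement --- the familiar delicate part of the Frequent Hypercyclicity Criterion, here further complicated by the non-uniform growth of $n\mapsto\lfloor m_n\rfloor$. The second, more genuinely new obstacle is establishing that the diagonal targets $z_j=\int_0^1 T(rt_0)y_j\,dr$ are dense in $E$: since a single operator of the semigroup need not be hypercyclic, $T(t_0)^{\ast}$ may have nonzero fixed vectors, and one cannot argue naively that the averaging operator $y\mapsto\int_0^1 T(rt_0)y\,dr$ has dense range. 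This is exactly the point where one uses that hypotheses (ii) and (iii) are assumed for \emph{every} $t_0\in\Delta\setminus\{0\}$: by enlarging the index set with a countable dense family of steps $t_0$ (equivalently, of base points of the sector) one makes the union of the associated averaged targets dense in $E$, and the thing to check carefully is that this enrichment does not spoil the pseudo-inverse bookkeeping underlying the error estimates.
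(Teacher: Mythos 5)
Your overall route differs from the paper's: you rebuild the frequent-hypercyclicity construction from scratch (selection sets $A(j,\nu)$, a vector $x$ assembled from blocks $\int_{[\lfloor m_k\rfloor-1,\lfloor m_k\rfloor]}S(\tau t_0)y_j\,d\tau$, and a diagonal/future/past error analysis), whereas the paper simply converts hypotheses (ii)--(iii) into the series conditions of Theorem \ref{oma} via the identities $\sum_{n=1}^{k}T(t_0\lfloor m_k\rfloor)S_{\lfloor m_{k-n}\rfloor}y=t_0e^{i\arg(t_0)}\sum_{n=1}^{k}\int_{B_{k,n}^{f}}T(tt_0e^{i\arg(t_0)})x\,dt$ (and its analogues over $A_{k,n}^{f}$), with $y=\int_0^{t_0}T(t)x\,dt$, and then invokes that criterion. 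Your inlined construction is the standard machinery and its combinatorial and error-estimate parts are essentially sound (note only that the hypotheses control integrals over the exact intervals $A_{k,n}^{f}$ and $s+B_{k,n}^{f}$, which your future/past sums do hit because $k'=k_0\mp n$; and that the passage from the discrete times $\lfloor m_k\rfloor t_0$ to balls in $\Delta$ is better done afterwards via local equicontinuity, as in Proposition \ref{manic}, since hypothesis (iii) carries no shift parameter $s$).

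The genuine gap is exactly where you place the weight: the density of the targets $z_j=\int_0^1 T(rt_0)y_j\,dr$. Your proposed fix --- enlarging the family of steps $t_0$ --- does not close it. Steps off the ray of $t_0$ destroy the pseudo-inverse identities of (i) (which require $\arg(r)=\arg(t)$), and steps $t_0'$ on the same ray produce blocks whose images under $T(\lfloor m_k\rfloor t_0)$ are $S$- and $T$-integrals over rescaled intervals that are no longer of the prescribed form $A_{k,n}^{f}$, $B_{k,n}^{f}$, so hypotheses (ii)--(iii) no longer apply to them; you acknowledge this but it is precisely the unresolved point. Moreover the detour is unnecessary: hypothesis (ii) already yields the density for the given $t_0$. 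If $x^*\in E^*$ annihilates $R(I-T(t_0))$, then $\langle x^*,x\rangle=\langle x^*,T(nt_0)x\rangle$ for all $n$, so every term $\int_{s+B_{k,n}^{f}}\langle x^*,T(tt_0e^{i\arg(t_0)})x\rangle\,dt$ of the unconditionally convergent (uniformly in $k$) series in (ii) equals $\int_0^{s}\langle x^*,T(r)x\rangle\,dr$; uniform unconditional convergence forces this common value to be $0$ for every $s\geq 0$, whence $x^*=0$. Since the generator is densely defined and $T(t_0)y-y=\int_0^{t_0}T(s)(Ay)\,ds$ on $D(A)$, the dense range of $I-T(t_0)$ gives that $y\mapsto\int_0^{t_0}T(t)y\,dt$ has dense range, and hence that $\{\int_0^{t_0}T(t)x\,dt:x\in E_0\}$ is dense --- which is the set of targets your construction needs. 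With this substitution your argument closes; without it, it does not.
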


\begin{proof}
We will include the most relevant details of proof, which is  similar to that of
\cite[Proposition 2.8]{man-peris}. Let a number $t_{0}\in \Delta \setminus \{0\}$ be fixed. First of all, we will prove that the range of operator $I-T(t_{0})$ is dense in $E$. To verify this, assume that $x^{\ast} \in E^{\ast}$ and $\langle x^{\ast}, x-T(t_{0})x\rangle =0,$ $x\in E.$ Using the semigroup property, this inductively implies
\begin{align}\label{ovo}
\bigl \langle x^{\ast}, x-T(nt_{0})x \bigr \rangle =0,\quad x\in E,\ n\in {\mathbb N}.
\end{align}
Fix a number $s\geq 0.$
The continuity of $x^{\ast}$ and unconditional convergence of series\\ $\sum_{n=1}^{k}\int_{s+B_{k,n}^{f}}T(tt_{0}e^{i\arg(t_{0})})x\, dt,$ uniformly in $k\in {\mathbb N},$ imply that for each $\epsilon>0$ there exists $N\in {\mathbb N}$ such that, for every $k\in {\mathbb N}$ with $k\geq N,$ we have
$$
\Biggl| \int^{s+t_{0}(\lfloor m_{k} \rfloor -\lfloor m_{k-N} \rfloor +1)}_{s+t_{0}(\lfloor m_{k} \rfloor -\lfloor m_{k-N} \rfloor)}\bigl \langle x^{\ast} , T(r)x \bigr \rangle \, dr \Biggr| \leq \epsilon .
$$
Due to \eqref{ovo}, we have
$$
\int^{s+t_{0}(\lfloor m_{k} \rfloor -\lfloor m_{k-N} \rfloor +1)}_{s+t_{0}(\lfloor m_{k} \rfloor -\lfloor m_{k-n} \rfloor)}\bigl \langle x^{\ast} , T(r)x \bigr \rangle \, dr=\int^{s}_{0}\bigl \langle x^{\ast} , T(r)x \bigr \rangle \, dr=0,\quad x\in E,
$$
so that $\int^{s}_{0} \langle x^{\ast} , T(r)x  \rangle \, dr=0,$ $x\in E,$ $s\geq 0,$ which yields $x^{\ast}=0.$
Since the integral generator of semigroup $(T(te^{i\arg(t_{0})}))_{t\geq 0}$ is dense in $E$ (see e.g. \cite{komura}), we can proceed as in the proof of \cite[Theorem 2.2]{man-peris} to deduce that the set $E_{0}':=\{ \int^{t_{0}}_{0}T(t)x\, dt : x\in E_{0}\}$ is dense in $E.$ For any
$x\in E_{0}$ we have $y=\int^{t_{0}}_{0}T(t)x\, dt \in E_{0}'$ and, similarly as in the proof of \cite[Proposition 2.8]{man-peris}, the condition (i) and an elementary argumentation show that:
\begin{align*}
\sum_{n=1}^{k}T(t_{0}\lfloor m_{k}\rfloor)S_{\lfloor m_{k-n}\rfloor}y=t_{0}e^{i\arg(t_{0})}\sum_{n=1}^{k}\int_{\lfloor m_{k} \rfloor -\lfloor m_{k-n} \rfloor}^{\lfloor m_{k} \rfloor -\lfloor m_{k-n} \rfloor +1}T\bigl(tt_{0}e^{i\arg(t_{0})}\bigr)x\, dt,
\end{align*}
\begin{align*}
\sum_{n=1}^{\infty}T(t_{0}\lfloor m_{k}\rfloor)S_{\lfloor m_{k+n} \rfloor}y=t_{0}e^{i\arg(t_{0})}\sum_{n=1}^{\infty}\int_{\lfloor m_{k+n} \rfloor -\lfloor m_{k} \rfloor}^{\lfloor m_{k+n} \rfloor -\lfloor m_{k} \rfloor -1}S\bigl(tt_{0}e^{i\arg(t_{0})}\bigr)x\, dt
\end{align*}
and
\begin{align*}
\sum_{n=1}^{\infty}S_{\lfloor m_{n} \rfloor}y=t_{0}e^{i\arg(t_{0})}\sum_{n=1}^{\infty}\int_{\lfloor m_{n} \rfloor}^{\lfloor m_{n} \rfloor  -1}S\bigl(tt_{0}e^{i\arg(t_{0})}\bigr)x\, dt.
\end{align*}
The result now follows by applying Theorem \ref{oma}.
\end{proof}

Compared with \cite[Proposition 2.8]{man-peris},
the established criterion is not so nice-looking; besides that, there is no easy way to generalize \cite[Theorem 2.2]{man-peris} for $f$-frequent hypercyclicity.
Because of that,
we will focus our attention henceforth to Theorem \ref{oma} and its applications, primarily. But, considering of just one single operator $T(t_{0})$ may not be the ideal option for work in the case that $\Delta \neq [0,\infty),$ and we need to state the following slight modification of Theorem \ref{oma} (we can formulate a similar statement corresponding to Theorem \ref{f-fhc}, as well):

\begin{theorem}\label{oma-niz}
Assume that $(T(t))_{t\in \Delta}$ is a $C_{0}$-semigroup on $E,$ $\delta >0$ and $f : [0,\infty) \rightarrow [1,\infty)$ is an increasing mapping. Define $m_{k}:=f(k),$ $k\in {\mathbb N}_{0}.$
Suppose that there are a sequence $(t_{n})_{n\in {\mathbb N}}\subseteq \Delta \setminus \{0\},$ a dense subset $E_{0}$ of $E$ and mappings $S_{n} : E_{0} \rightarrow E$ ($n\in {\mathbb N}$) such that
the following conditions hold for all $y\in E_{0}$:
\begin{itemize}
\item[(i)]The series $\sum_{n=1}^{k}T(t_{\lfloor m_{k}\rfloor})S_{\lfloor m_{k-n}\rfloor}y, k\in\mathbb N$ converges unconditionally, uniformly in $k\in {\mathbb N}.$
\item[(ii)] The series $\sum_{n=1}^{\infty}T(t_{\lfloor m_{k}\rfloor})S_{\lfloor m_{k+n} \rfloor}y$ converges unconditionally, uniformly in $k\in {\mathbb N}.$
\item[(iii)] The series $\sum_{n=1}^{\infty}S_{\lfloor m_{n} \rfloor}y$ converges unconditionally.
\item[(iv)] $\lim_{n\rightarrow \infty}T(t_{\lfloor m_{n}\rfloor})S_{\lfloor m_{n} \rfloor}y=y,$ $y\in E_{0}.$
\item[(v)] The equation \eqref{krke-rke} holds and the increasing mapping $g : [1,\infty) \rightarrow [1,\infty)$ satisfies that $|t_{n}|\leq g(n)$ for all $n\in {\mathbb N}.$
\end{itemize}
Then the sequence of operators $(T_{n}:=T(t_{n}))_{n\in {\mathbb N}}$ is l-$(m_{k})$-frequently hypercyclic and
$(T(t))_{t\in \Delta}$ is $(g\circ f)$-frequently hypercyclic.
\end{theorem}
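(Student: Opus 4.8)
The plan is to split the proof into two independent parts, exactly mirroring the structure of Theorem \ref{oma}: first establish l-$(m_{k})$-frequent hypercyclicity of the \emph{sequence} $(T_{n}:=T(t_{n}))_{n\in {\mathbb N}}$, and then transfer this to the whole sector $\Delta$ by means of Proposition \ref{manic}. Concretely, conditions (i)--(iv) are precisely the hypotheses of the Frequent Hypercyclicity Criterion underlying Theorem \ref{oma} (i.e.\ \cite[Theorem 2.5]{C-DS}), \emph{read at the level of a sequence of operators}: one simply has the sequence $(T(t_{n}))_{n\in {\mathbb N}}$ and the mappings $(S_{n})_{n\in {\mathbb N}}$ on the dense set $E_{0}$, and the series built from the operators $T(t_{\lfloor m_{k}\rfloor})$ and $S_{\lfloor m_{\bullet}\rfloor}$ enjoy the required uniform unconditional convergence. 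Hence the same argument yields an l-$(m_{k})$-frequently hypercyclic vector $x\in E$ of $(T_{n})_{n\in {\mathbb N}}$; in particular, $x$ is an l-$(m_{n})$-hypercyclic element of $(T(t_{n}))_{n\in {\mathbb N}}$.

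For the reader's orientation let me recall how that first part runs: fix a countable base $(V_{j})_{j\in {\mathbb N}}$ of open non-empty subsets of $E$; for each $j$ use (iv) to pick $y_{j}\in E_{0}$ with $T(t_{\lfloor m_{j}\rfloor})S_{\lfloor m_{j}\rfloor}y_{j}$ close to some point of $V_{j}$; split ${\mathbb N}$ into pairwise disjoint sets $A_{j}$ of positive lower density with gaps growing sufficiently fast (the classical partition lemma of Bonilla--Grosse-Erdmann); put $x:=\sum_{j}\sum_{k\in A_{j}}S_{\lfloor m_{k}\rfloor}y_{j}$, which converges by (iii); and then, for $k\in A_{j}$, apply $T(t_{\lfloor m_{k}\rfloor})$ to $x$ and control the three resulting pieces, namely the ``diagonal'' term $T(t_{\lfloor m_{k}\rfloor})S_{\lfloor m_{k}\rfloor}y_{j}$ (close to $V_{j}$ by the choice of $y_{j}$), the ``past'' sum $\sum_{n\geq 1}T(t_{\lfloor m_{k}\rfloor})S_{\lfloor m_{k-n}\rfloor}(\cdot)$ (small, uniformly in $k$, by (i)), and the ``future'' sum $\sum_{n\geq 1}T(t_{\lfloor m_{k}\rfloor})S_{\lfloor m_{k+n}\rfloor}(\cdot)$ (small, uniformly in $k$, by (ii), using also the local equicontinuity of $(T(t))_{t\in \Delta}$ to push the estimates through the operators). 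This shows that $\{k\in {\mathbb N}: T(t_{k})x\in V_{j}\}$ contains $A_{j}$ up to finitely many exceptions, whence $\underline{d}_{m_{n}}(\{k: T(t_{k})x\in V_{j}\})>0$ for every $j$.

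For the second part, condition (v) supplies exactly what Proposition \ref{manic} requires: the separation \eqref{krke-rke} holds and the increasing map $g:[1,\infty)\to[1,\infty)$ satisfies $|t_{n}|\leq g(n)$ for all $n\in {\mathbb N}$, with $m_{k}=f(k)$. Applying Proposition \ref{manic} to the l-$(m_{n})$-hypercyclic vector $x$ of $(T(t_{n}))_{n\in {\mathbb N}}$ obtained above yields that $x$ is a $(g\circ f)$-frequently hypercyclic vector of $(T(t))_{t\in \Delta}$, which is the second assertion and completes the proof. The only genuinely delicate point is the first part, and even there the novelty over Theorem \ref{oma} is essentially notational: one must only check that the cited construction never uses an algebraic relation among the $T(t_{n})$ (such as the identities $T(nt_{0})=T(t_{0})^{n}$ available when $t_{n}=nt_{0}$) — and it does not, since (i)--(iii) are already phrased as direct convergence statements, so that the head/tail estimates along $A_{j}$ are the same routine computation as in \cite[Theorem 2.5]{C-DS}.
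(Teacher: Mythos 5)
Your proposal is correct and takes essentially the same route as the paper: the paper's proof simply invokes \cite[Theorem 3.1]{1211212018} to get that $(T(t_{n}))_{n\in{\mathbb N}}$ is l-$(m_{k})$-frequently hypercyclic from conditions (i)--(iv), and then applies Proposition \ref{manic} via condition (v), exactly as you do. The only difference is that you unfold the cited criterion into its standard Bonilla--Grosse-Erdmann-type construction, which is a faithful (if sketchy) account of what that reference proves.
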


\begin{proof}
By \cite[Theorem 3.1]{1211212018}, we easily infer that the sequence $(T_{n})_{n\in {\mathbb N}}$ is l$-(m_{k})$-frequently hypercyclic.
Then the final statement follows from Proposition \ref{manic}.
\end{proof}

\subsection{Examples I}\label{zajeb}

We reconsider several illustrative examples proposed in \cite{kuckanje}-\cite{kuckanje-prim} and  \cite{knjigaho}:

%\begin{example}\label{zajeb}
\begin{itemize}
\item[(i)] (\cite[Example 2]{kuckanje}) We will prove that the translation semigroups $(T(t))_{t\in \Delta}$ and $(T(te^{-i\pi/4}))_{t\geq 0}$ are frequently hypercyclic as well as that any semigroup $(T(te^{i\varphi}))_{t\geq 0},$ where $-\pi/4<\varphi \leq \pi/4,$ is not hypercyclic.

Let $\Delta:=\Delta(\pi/4),$ let
\begin{align*}
\rho(x+iy):=
\left\{
\begin{array}{l}
1,\ x+y\geq \sqrt{x-y},\\
e^{x+y-\sqrt{x-y}},\mbox{ otherwise,}
\end{array}
\right.
\end{align*}
and let $E:=L^{p}_{\rho}(\Delta, {\mathbb K}).$
Then we know that the translation semigroups $(T(t))_{t\in \Delta}$ and $(T(te^{-i\pi/4}))_{t\geq 0}$ are chaotic as well as that any periodic point of $(T(t))_{t\in \Delta}$ lies on the boundary of $\Delta ;$ see \cite{kuckanje} for the notion.

 In order to see that $(T(te^{-i\pi/4}))_{t\geq 0}$ is frequently hypercyclic, we will apply the Frequent Hypercyclic Criterion for $C_{0}$-semigroups \cite[Theorem 2.2]{man-peris}. Let $E_{0}$ denote the set consisting of all continuous functions $f : \Delta \rightarrow {\mathbb K}$ with compact support; then $E_{0}$ is dense in $E.$
Define the mapping $S(t) f: \Delta \rightarrow {\mathbb K}$ by $(S(t) f)(x+iy):=f(x+iy-te^{-i\pi/4}),$ $x+iy\in te^{-i\pi/4}+supp(f)$ and $(S(t) f)(x):=0,$ otherwise ($f\in E_{0},$ $n\in {\mathbb N}$). Then $T(t)S(t)f=f,$ $f\in E_{0}$ and $T(t)S(r)f=S(r-t)f,$ $f\in E_{0},$ $r>t>0$, so that it suffices to show that the mappings $t\mapsto T(t)f,$ $t\geq 0$ and $t\mapsto S(t)f,$ $t\geq 0$ are Bochner integrable. This is clear for the first mapping because, for every $f\in E_{0},$ there exists $t_{0}>0$ such that $T(t)f=0$ for all $t\geq t_{0}.$ To see that the mapping $t\mapsto S(t)f,$ $t\geq 0$ is Bochner integrable for any $f\in E_{0}
$ with $supp(f)=K,$ observe first that:
\begin{align*}
\int^{\infty}_{0}\|S(t)f\|\, dt \leq \|f\|_{\infty}\int^{\infty}_{0}\Biggl( \int_{K}\rho\bigl( x+iy+te^{-i\pi/4}\bigr)\, dx\, dy \Biggr)^{1/p}\, dt .
\end{align*}
It is clear that there exist two finite numbers $t_{0}> 0$ and $c_{0}>0$ such that $x+y<\sqrt{x-y+t\sqrt{2}}$ for all $t\geq t_{0}.$ Hence,
\begin{align*}
\int^{\infty}_{0}&\|S(t)f\|\, dt \leq \|f\|_{\infty}\int^{t_{0}}_{0}\Biggl( \int_{K}\rho\bigl( x+iy+te^{-i\pi/4}\bigr)\, dx\, dy \Biggr)^{1/p}\, dt
\\ & + \|f\|_{\infty}\int_{t_{0}}^{\infty}\Biggl( \int_{K}\rho\bigl( x+iy+te^{-i\pi/4}\bigr)\, dx\, dy \Biggr)^{1/p}\, dt
\\ & \leq c_{0}\|f\|_{\infty}+c_{0}\|f\|_{\infty}\int_{t_{0}}^{\infty}\Biggl( \int_{K} e^{-\sqrt{t\sqrt{2}}} \, dx  \, dy\Biggr)^{1/p}\, dt<\infty,
\end{align*}
proving the claimed. The frequent hypercyclicity of $(T(t))_{t\in \Delta}$ follows by applying \cite[Proposition 2.1]{man-peris} and Proposition \ref{manic}. If $-\pi/4<\varphi \leq \pi/4,$
then there exists a finite constant $t_{\varphi}>0$ such that
\begin{align*}
t(\cos \varphi +\sin \varphi)\geq \sqrt{t(\cos \varphi -\sin \varphi)},\quad t\geq t_{\varphi},
\end{align*}
so that
\begin{align}\label{sel-man}
\rho(te^{i\varphi})=1,\quad t\geq t_{\varphi}.
\end{align}
Assume that the semigroup $(T(te^{i\varphi}))_{t\geq 0}$ is hypercyclic on $E$. Repeating literally the argumentation given in the first part of proof of \cite[Theorem 4.7]{fund}, with appealing to \cite[Lemma 4.2]{kuckanje-prim} in place of \cite[Lemma 4.2]{fund}, we obtain that there exists an increasing sequence $(t_{n})_{n\in {\mathbb N}}$ of positive reals tending to infinity such that $\lim_{n\rightarrow \infty}\rho(t_{n}e^{i\varphi})=0.$ This is in contradiction with \eqref{sel-man}.
\item[(ii)] (\cite[Example 4.17]{kuckanje-prim}) Let $\Delta:=\Delta(\pi/2),$ let $\rho(x+iy):=e^{-x+|y|}$ for $x+iy\in \Delta ,$ and let $E:=L^{p}_{\rho}(\Delta, {\mathbb K}).$ Applying
the Frequent Hypercyclic Criterion for $C_{0}$-semigroups, it can be easily seen that for each $\varphi \in (-\pi/4,\pi/4)$ the semigroup $(T(te^{i\varphi}))_{t\geq 0}$ is frequently hypercyclic on $E,$
so that the frequent hypercyclicity of $(T(t))_{t\in \Delta}$ follows again by applying \cite[Proposition 2.1]{man-peris} and Proposition \ref{manic}. If $|\varphi|\geq \pi/4,$ then the semigroup $(T(te^{i\varphi}))_{t\geq 0}$ cannot be hypercyclic, which can be established by the arguments given in the part (i) and the fact that there is no increasing sequence $(t_{n})_{n\in {\mathbb N}}$ of positive reals tending to infinity such that $\lim_{n\rightarrow \infty}\rho(t_{n}e^{i\varphi})=0.$
\item[(iii)] We will show that %$(T(t))_{t\in \Delta}$ is frequently hypercyclic for $\zeta>1,$ so that
there exists a frequently hypercyclic semigroup with the index set $\Delta(\pi/4)$ and without any (frequently) hypercyclic single operator.

This example is based on \cite[Example 4.14]{kuckanje-prim}. Let $\Delta:=\Delta(\pi/4),$ $\zeta \geq 1,$ let
\begin{align*}
\rho(x+iy):=
\left\{
\begin{array}{l}
\Bigl(\frac{x+y+1}{x-y+1}\Bigr)^{2\zeta},\ x+y+1\geq \sqrt{x-y+1},\\
\Bigl(\frac{1}{x+y+1}\Bigr)^{2\zeta},\ x+y+1< \sqrt{x-y+1},
\end{array}
\right.
\end{align*}
and let $E:=L^{p}_{\rho}(\Delta, {\mathbb K}).$ Then $\rho(\cdot)$ is an admissible weight function and, arguing as in the afore-mentioned example, we can prove that $(T(t))_{t\in \Delta}$ is hypercyclic and that there is no number $t_{0}\in \Delta \setminus \{0\}$ such that
$T(t_{0})$ is hypercyclic.

 We  apply Theorem \ref{oma-niz} with $E_{0}$ being the space of continuous compactly supported functions, the sequence $T_{n}=T(t_{n})$ with
$$
t_{n}:=n+i\frac{-3-2n+\sqrt{8n+9}}
{2},\quad n\in {\mathbb N},
$$
and mappings  $S_{n} f: \Delta \rightarrow {\mathbb K}$ given by $(S_{n} f)(x):=f(x-t_{n} ),$ $x\in t_{n}+supp(f)$ and $(S_{n} f)(x):=0,$ otherwise ($f\in E_{0},$ $n\in {\mathbb N}$). In order to do that,
let us observe first that the condition (v) of Theorem \ref{oma-niz} holds wih a certain positive number $\delta >0$ and an increasing mapping $g(\cdot)$ with $g(n) \sim n\sqrt{2}$ as $n\rightarrow \infty.$
Fix now a function $f\in E_{0}$ with $supp(f)=K$ and observe first that for each $x+iy\in \Delta$ we have the following equivalence relation:
$x+y+1\geq \sqrt{x-y+1}$ iff $y\leq \frac{-3-2x+\sqrt{8x+9}}
{2}.$
Consider first the case $p=1.$ Then $(T_{t_{k}}S_{n+k}f)(x)=f(x-t_{k+n}+t_{k} ),$ $x\in t_{k+n}-t_{k}+supp(f)$ and $(T_{t_{k}}S_{n+k}f)(x)=0,$ otherwise ($k,\ n\in {\mathbb N}$). Furthermore, it can be easily seen that
\begin{align*}
\sum_{n=1}^{\infty}\bigl\|& T_{t_{k}}S_{n+k}f\bigr\|\leq \|f\|_{\infty}\sum_{n=1}^{\infty}\int_{t_{n+k}-t_{k}+K}\rho(x+iy)\, dx\, dy
\\& \leq \|f\|_{\infty}\sum_{n=1}^{\infty}\int_{t_{n+k}-t_{k}+K}\Biggl(\frac{1}{x+y+1}\Biggr)^{2\zeta}\, dx\, dy
\\&  = \|f\|_{\infty}\sum_{n=1}^{\infty}\int_{K}\Biggl(\frac{1}{x+y+1+\sqrt{8(n+k)+9}-\sqrt{8k+9}}\Biggr)^{2\zeta}\, dx\, dy
\\ & \leq \|f\|_{\infty}\sum_{n=1}^{\infty}m(K)2^{-2\zeta}n^{-\zeta},\quad k,\ n\in {\mathbb N},
\end{align*}
so that the series $\sum_{n=1}^{\infty}T(t_{k})S_{k+n }f$ converges absolutely, uniformly in $k\in {\mathbb N}.$ Similarly, the series $\sum_{n=1}^{k}T(t_{k})S_{k-n}f$ converges absolutely, uniformly in $k\in {\mathbb N},$
and the series $\sum_{n=1}^{\infty}S_{n }f$ converges absolutely. Since $ T_{t_{k}}S_{k}f=f,$ the claimed assertion follows.
Suppose now that $p>1.$ Then $E$ does not contain the space $c_{0}$ and it suffices to prove that for each functional $\psi \in L^{p'}_{\rho}(\Delta, {\mathbb K}),$ where $p'\in (1,\infty)$ satisfies $1/p+1/p'=1,$ we have that the series $\sum_{n=1}^{k}|\langle \psi,T(t_{k})S_{k-n}f\rangle |,$
$\sum_{n=1}^{\infty}|\langle \psi ,T(t_{k})S_{k+n }f \rangle |$ and $\sum_{n=1}^{\infty}|\langle \psi, S_{n }f \rangle |$ unconditionally converge, the first two of them uniformly in $k\in {\mathbb N}.$ This can be deduced as in the final part of proof of \cite[Proposition 3.3]{man-peris}. Observe that $(T(t))_{t\in \Delta}$ is $q$-frequently hypercyclic for any $q>1,$ provided that $\zeta=1,$ and that it is not clear whether $(T(t))_{t\in \Delta}$ is frequently hypercyclic in this case.

Finally, we would like to note, without going into full details, that the trick used above in combination with the analysis contained in \cite[Example 4.15]{kuckanje-prim} enables one to simply construct an example of translation semigroup $(T(t))_{t\in \Delta(\pi/4)}$ satisfying the following conditions: {\it for any ray $R,$ the semigroup  $(T(t))_{t\in R}$ is both topologically mixing and frequently hypercyclic but the whole semigroup is $(T(t))_{t\in \Delta(\pi/4)}$ is frequently hypercyclic and not topologically mixing (see \cite{kuckanje-prim} for the notion)}.
\item[(iv)]
 Let
$\Delta={\mathbb C}$ or $\Delta=\Delta(\alpha)$ for some $\alpha \in (0,\frac{\pi}{2}],$
let $m\in {{\mathbb N}_{0}},$ and let $C^{m}(\Delta, {\mathbb K})$
denote the vector space consisting of all functions $\varphi : \Delta
\rightarrow {\mathbb K}$ that are $m$ times continuously
differentiable in $\Delta^{\circ}$ and whose partial derivatives
$D^{\alpha} \varphi$ can be extended continuously throughout $\Delta
.$  Define $C^{\infty}(\Delta, {\mathbb
	K}):=\bigcap _{m\in {\mathbb N}}C^{m}(\Delta, {\mathbb K}).$
The Fr\' echet topology on $C^{m}(\Delta, {\mathbb K}),$ resp.
$C^{\infty}(\Delta, {\mathbb K}),$ is induced by the following system of increasing
seminorms:
$$
p_{n}(f):=\sup_{\tau \in \Delta_{n}}\sup_{|\alpha|\leq m}\bigl|D^{\alpha}
f (\tau)\bigr|,\ f\in C^{m}(\Delta, {\mathbb K}) \mbox{, resp. }
$$
$$
p_{n}(f):=\sup_{\tau \in \Delta_{n}}\sup_{|\alpha|\leq n}\bigl|D^{\alpha}
f(\tau)\bigr|, \ f\in C^{\infty}(\Delta, {\mathbb K}),\ n\in {\mathbb N}.
$$
Let $E:=C^{m}(\Delta, {\mathbb K})$ for some $m\in
{{\mathbb N}_{0}}$ or $E:=C^{\infty}(\Delta, {\mathbb K}).$
Based on  the consideration carried out in \cite[Example 3.1.29]{knjigaho},
we know that the translation semigroup $(T(t))_{t\in
	\Delta}$ is a locally equicontinuous $C_{0}$-semigroup in $E,$ and we already know that $(T(t))_{t\in
	\Delta}$ is both chaotic and topologically mixing. An easy application of Theorem \ref{oma} shows that $(T(t))_{t\in
	\Delta},$ $(T(t))_{t\in R}$ and any single operator $T(t_{0})$ are frequently hypercyclic, as well ($R\subseteq \Delta$ is a ray, $t_{0}\in \Delta \setminus \{0\}$).
 Then the translation semigroup $(T(t))_{t\in
\Delta}$ is a locally equicontinuous $C_{0}$-semigroup in $E,$ and we already know that $(T(t))_{t\in
\Delta}$ is both chaotic and topologically mixing. An easy application of Theorem \ref{oma} shows that $(T(t))_{t\in
\Delta},$ $(T(t))_{t\in R}$ and any single operator $T(t_{0})$ are frequently hypercyclic, as well ($R\subseteq \Delta$ is a ray, $t_{0}\in \Delta \setminus \{0\}$).
\end{itemize}

At the end of this section
we will prove the following extension of \cite[Proposition 2.1]{man-peris} for $f$-frequently hypercyclic semigroups on Fr\' echet spaces:

\begin{theorem}\label{f-freq-21.}
Let $(T(t))_{t\geq 0}$ be a $C_{0}$-semigroup on $E,$ let $f : [0,\infty) \rightarrow [1,\infty)$ be an increasing mapping, and let $m_{k}=f(k),$ $k\in {\mathbb N}_{0}.$
Suppose that
\begin{align}\label{baqart}
\liminf_{k\rightarrow \infty}\frac{m_{k}}{k}>0.
\end{align}
Then
the following statements are equivalent:
\begin{itemize}
\item[(i)] $(T(t))_{t\geq 0}$ is $f$-frequently hypercyclic;
\item[(ii)] for every $t>0$ the operator $T(t)$ is l-$(m_k)$-frequently hypercyclic;
\item[(iii)] there exists $t>0$ such that the operator $T(t)$ is l-$(m_k)$-frequently hypercyclic.
\end{itemize}
\end{theorem}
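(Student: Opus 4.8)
The plan is to prove the equivalences by running the cycle (iii) $\Rightarrow$ (i) $\Rightarrow$ (ii) $\Rightarrow$ (iii), following \cite[Proposition 2.1]{man-peris} while keeping track of the distortion caused by $f.$ The implication (ii) $\Rightarrow$ (iii) is trivial, and (iii) $\Rightarrow$ (i) I would read off from Proposition \ref{manic}(iii): if $T(t_{1})$ is l-$(m_{k})$-frequently hypercyclic with vector $x,$ then $(T(nt_{1}))_{n\in{\mathbb N}}$ is l-$(m_{k})$-hypercyclic with vector $x,$ and Proposition \ref{manic}(iii) applied with $t_{n}:=nt_{1}$ --- for which \eqref{krke-rke} holds with $\delta=t_{1}$ --- gives that $x$ is an $f$-frequently hypercyclic element of $(T(t))_{t\ge0}.$ So the real work is the implication (i) $\Rightarrow$ (ii).

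For (i) $\Rightarrow$ (ii), fix $t_{1}>0$ and an $f$-frequently hypercyclic vector $x$ of $(T(t))_{t\ge0}.$ Given a non-empty open $U\subseteq E,$ I would first pick $\eta\in(0,t_{1})$ and a non-empty open $V\subseteq E$ with $T(h)V\subseteq U$ for all $h\in[0,\eta]$; here $V$ can be taken to be $\{y\in E : T(h)y\in U\text{ for every }h\in[0,\eta]\},$ which is open because $\{T(h)y_{0} : h\in[0,\eta]\}$ is compact and $(T(t))_{t\ge0}$ is locally equicontinuous, and which exhausts $U$ as $\eta\downarrow 0.$ Since $m(\{s\in[0,f(t)] : T(s)x\in V\})\ge c\,t$ for some $c>0$ and all large $t,$ and $[0,f(t)]$ is covered by $\lceil f(t)/t_{1}\rceil$ consecutive blocks of length $t_{1},$ at least $c\,t/t_{1}$ of these blocks contain a point $kt_{1}+r_{k}$ with $r_{k}\in[0,t_{1}]$ and $T(kt_{1}+r_{k})x\in V.$ Subdividing $[0,t_{1}]$ into finitely many, say $N,$ pieces of length $<\eta$ and applying the pigeonhole principle, I would obtain one $r^{\ast}\in[0,t_{1}]$ and at least $c\,t/(Nt_{1})$ of those blocks for which $0\le r^{\ast}-r_{k}<\eta,$ whence, for the corresponding $k,$
\[T(kt_{1})\bigl(T(r^{\ast})x\bigr)=T(r^{\ast}-r_{k})\bigl(T(kt_{1}+r_{k})x\bigr)\in T\bigl([0,\eta]\bigr)V\subseteq U.\]
So $z:=T(r^{\ast})x$ is the candidate vector, with $\bigl|\{k\in{\mathbb N} : k\le f(t)/t_{1},\ T(kt_{1})z\in U\}\bigr|\ge\frac{c}{2Nt_{1}}\,t$ for all large $t.$

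To finish, I must pass from this estimate to $\underline{d}_{m_{k}}\bigl(\{k : T(kt_{1})z\in U\}\bigr)>0,$ that is, $\liminf_{n}\bigl|\{k\le f(n) : T(kt_{1})z\in U\}\bigr|/n>0.$ If $t_{1}\ge1$ this is immediate with $t=n,$ because $f(n)/t_{1}\le f(n).$ If $t_{1}<1,$ one instead, for each $n,$ takes the largest $t$ with $f(t)\le t_{1}f(n)$ and invokes \eqref{baqart} --- which makes $m_{k}=f(k)$ grow at least linearly --- to keep this $t$ comparable to $n,$ say $t\ge\gamma n$ for a fixed $\gamma>0,$ so that $\bigl|\{k\le f(n) : \cdot\}\bigr|\ge\bigl|\{k\le f(t)/t_{1} : \cdot\}\bigr|\ge\frac{c\gamma}{2Nt_{1}}\,n.$ Finally, a standard exhaustion over a countable base of $E$ --- combining at each stage only finitely many of the above constraints and using local equicontinuity, exactly as in the proof of \cite[Proposition 2.1]{man-peris} --- produces a single vector witnessing that $T(t_{1})$ is l-$(m_{k})$-frequently hypercyclic, completing (i) $\Rightarrow$ (ii).

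The step I expect to be the main obstacle is precisely this last scale-matching. A discrete orbit hit at index $n$ has to be recorded against the window $[1,m_{n}]=[1,f(n)],$ but after passing to the semigroup it sits at real time $nt_{1}$ and is naturally recorded against a window of size proportional to $f(t),$ the two windows differing by the factor $t_{1};$ for $f(t)=t+1$ (and more generally $f(t)=t^{q}+1$) this factor is harmless because the lower $q$-density is invariant under scaling up to a multiplicative constant, but for a general increasing $f$ it is exactly the linear lower bound \eqref{baqart} on $m_{k}/k$ that must be exploited in order to absorb it when passing to the lower limit.
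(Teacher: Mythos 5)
Your reduction (ii) $\Rightarrow$ (iii) $\Rightarrow$ (i) via Proposition \ref{manic} is exactly what the paper does, and your closing remark about where \eqref{baqart} enters (absorbing the scale factor $t_{1}$ between the windows $[0,f(t)/t_{1}]$ and $[1,f(n)]$) is correct. The problem is the core of (i) $\Rightarrow$ (ii). Your pigeonhole produces a subdivision point $r^{\ast}$ that depends not only on $U$ but on the scale $t$: for each large $t$ you know only that \emph{some} one of the $N$ pieces captures $\geq ct/(Nt_{1})$ of the good blocks, and the popular piece may change with $t$. Since a union of finitely many sets can have positive lower density while every one of them has lower density zero, you cannot conclude that any fixed $z=T(r^{\ast})x$ satisfies $\liminf_{n}|\{k\leq f(n):T(kt_{1})z\in U\}|/n>0$, even for a single $U$. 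The final appeal to ``a standard exhaustion over a countable base'' does not repair this: that device builds hypercyclic (residual) vectors, whereas the set of frequently hypercyclic vectors of an operator is in general meager, so there is no Baire-type argument that merges a family of per-$U$ (let alone per-$t$) candidate vectors into one l-$(m_{k})$-frequently hypercyclic vector; and the proof of \cite[Proposition 2.1]{man-peris} that you invoke does not proceed this way.

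What the paper does instead, following \cite{cone2}, is arrange matters so that the \emph{same} vector $x$ works for every target. Frequent recurrence is applied only to a neighbourhood of $x$ itself, i.e.\ to the set $L=\{t\in[0,m_{N}]:T(t)x-x\in V\}$, whose lower $f$-density is positive independently of the target $y+U$; the approximation of an arbitrary $y$ is then achieved by composing with the finite, $U$-dependent but $t$-independent, family of operators $T(n_{j}t_{0}+\tfrac{j}{k}t_{0})$, using \cite[Theorem 2.3]{cone2} to guarantee that each orbit point $T(t_{0}j/k)x$ is a hypercyclic vector of $T(t_{0})$. The translations $f_{j}$ push $L_{j}$ into the window $I_{k-1}$ where rounding to an integer multiple of $t_{0}$ costs at most $t_{0}/k$, and the measure estimate survives the rounding. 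This is the idea your proposal is missing; without it, or some substitute mechanism that decouples the recurrence set from the target $U$, the implication (i) $\Rightarrow$ (ii) is not established.
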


\begin{proof}
The implication (ii) $\Rightarrow$ (iii) is trivial, while the implication (iii) $\Rightarrow$ (i) follows from an application of Proposition \ref{manic}. Let a number $t_{0}>0$ be given and
let (i) hold. All that we need to show is that the operator $T(t_{0})$ is l-$(m_k)$-frequently hypercyclic. In order to see that, we will slightly modify the arguments given in the proofs of \cite[Lemma 3.1, Theorem 3.2]{cone2}. Without loss of generality, we may assume that $R(f)\subseteq {\mathbb N}.$ Let $x\in E$ be an $f$-frequently hypercyclic vector of $(T(t))_{t\geq 0}$. Fix $k\in {\mathbb N},$ $y\in E$ and two open balanced neighbourhoods of the origin $U$ and $U'$ in $E$
such that $U'+U'\subseteq U.$ Then, due to \cite[Theorem 2.3]{cone2}, $T(t_{0}j/k)x$ is a hypercyclic vector of $T(t_{0})$ for $j=0,1,2,\cdot \cdot \cdot,k-1,$ which means that there exist positive integers $n_{j}$ such that $T(n_{j}t_{0}+t_{0}(j/k))x\in y+U'$ for $j=0,1,2,\cdot \cdot \cdot,k-1.$
On the other hand, $(T(t))_{t\geq0}$ is strongly continuous so that there exists an open balanced neighbourhood of the origin $V$ in $E$ such that $T(s)(V)\subset U'$, for $s\leq N_0=\max\{n_j\, :\, j=0,1,2,\cdot \cdot \cdot,k-1\}+\max(1,t_{0})$.
By our assumption, there are numbers $C>0$ and $N_1\in{\mathbb N}$ such that $m(\{t\in[0,m_N]\, :\, T(t)x-x\in V\})\geq CN$, for every $N\geq N_1$.
For every $N\in{\mathbb N},$ we define $L:=\{t\in[0,m_N]\, :\, T_tx-x\in V\}$. Furthermore, for every $j\in {\mathbb N},$ we define
$$
I_j:=\bigcup\limits_{n\in {\mathbb N},m\in {{\mathbb N}_{0}}}\Biggl[t_{0}m_{n}+t_{0}s+\frac{j}{k}t_{0},t_{0}m_{n}+t_{0}s+\frac{j-1}{k}t_{0}\Biggr), \mbox{ } \ \ L_j:=I_j\cap L
$$
and the mappings $f_j:[0,\infty)\rightarrow[0,\infty)$ by $f_j(t):=t+n_{k-j-1}t_{0}+\frac{k-j-1}{k}t_{0}$.
Then it can be easily seen that $f_j(t)\in I_{k-1}$ and $T(f_j(t))x\subseteq U $ for every $t\in L_j.$
Hence, we have
\begin{align*}
m\Bigl(\bigl\{t\in [& 0,m_{N}+N_{0}+1]\, :\, T_tx-y\in U\mbox{ and }t\in I_{k-1}\bigr\}\Bigr)
\\ & \geq m\Biggl(\bigcup\limits_{j=0}^{k-1}f_j(L_j)\Biggr)\geq \sum\limits_{j=0}^{k-1}\frac{m\bigl(f_j(L_j)\bigr)}{k}
\\& =\sum\limits_{j=0}^{k-1}\frac{m(L_j)}{k}\geq \frac{m_{N}-m_{1}}{k}.
\end{align*}
Using \eqref{baqart} and this estimate, it readily follows that
$$
\liminf_{N\rightarrow \infty}\frac{m\bigl(\{ m_{N}\geq t\geq 0 : T(t)x-y\in V,\ t\in I_{k-1} \}\bigr)}{N}>0.
$$
Hence, there exist a positive constant $c>0$ and a strictly increasing sequence of positive integers $(N_{l})_{l\in {\mathbb N}}$ such that
$$
\frac{m\bigl(\{ m_{N_{l}}\geq t\geq 0 : T(t)x-y\in V,\ t\in I_{k-1} \}\bigr)}{N_{l}}>c,\quad l\in {\mathbb N}.
$$
For fixed $l\in {\mathbb N},$
there exists a number $t_{1}\in [0,m_{N_{l}}]$ such that $T(t)x-y\in V$ and $ t\in I_{k-1}.$ If  $t_{1}\in [t_{0}m_{n_{1}}+t_{0}s_{1}+\frac{k-1}{k}t_{0},t_{0}m_{n_{1}}+t_{0}s_{1}+t_{0})$ for some integers $n_{1}\in {\mathbb N}$ and $s_{1}\in {\mathbb N}_{0},$ then $t_{1}':=t_{0}(m_{n_{1}}+s_{1}+1)$ satisfies $|t_{1}-t_{1}|'\leq t_{0}/k$
and
$T(t_{1}')x\in y+U$ by the proof of \cite[Theorem 3.2]{cone2}. There exist integers $n_{2}\in {\mathbb N},\ s_{2}\in {\mathbb N}_{0}$ and a positive real number
$t_{2}\in [0,m_{N_{l}}]$ such that $m_{n_{2}}+s_{2}\neq m_{n_{1}}+s_{1},$
$T(t_{2})x-y\in V$ and $t_{2}\in [t_{0}m_{n_{2}}+t_{0}s_{2}+\frac{k-1}{k}t_{0},t_{0}m_{n_{2}}+t_{0}s_{2}+t_{0});$ otherwise, the Lebesgue measure of set $m(\{ m_{N_{l}}\geq t\geq 0 : T(t)x-y\in V,\ t\in I_{k-1} \})$ cannot be greater than $t_{0}/k.$ Set $t_{2}':=t_{0}(m_{n_{2}}+s_{2}+1).$ Then $t_{2}'\neq t_{1}'$ and $T(t_{2}')x\in y+U,$ as well. Proceeding in such a way, we may construct at least $\lfloor c'N_{l}\rfloor $ positive numbers $t_{1}',\cdot \cdot \cdot,t_{\lfloor c'N_{l}\rfloor}\in  [0,m_{N_{l}}]$ such that $T(t_{j}')x\in y+U$ for every $j=1,\cdot \cdot \cdot, \lfloor c'N_{l}\rfloor,$ where $c'>0$ is a certain positive constant. This simply implies that $x$ is an  l-$(m_k)$-frequently hypercyclic vector of $T(t_{0}),$ finishing the proof of theorem.
\end{proof}

It is worth noting that our result is optimal since the estimate \eqref{baqart} is necessary for the operator $T(1)$ to be l-$(m_k)$-frequently hypercyclic; it is also clear that \eqref{baqart} holds iff $\liminf_{x\rightarrow +\infty}\frac{f(x)}{x}>0,$ which is a necessary condition for $(T(t))_{t\geq 0}$ to be $f$-frequently hypercyclic. By the proofs of Proposition \ref{manic} and Theorem \ref{f-freq-21.}, we may conclude that, for a given element $x\in E,$ we have that $x$ is an $f$-frequently hypercyclic vector of $(T(t))_{t\geq 0}$ iff $x$ is an l-$(m_k)$-frequently hypercyclic vector of $T(t)$ for all $t>0$ iff $x$ is an l-$(m_k)$-frequently hypercyclic vector of $T(t)$ for some $t>0.$ Using this fact as well as the method proposed in the proof of Proposition \ref{markknop}, we may deduce the following:

\begin{proposition}\label{mjuzika}
Suppose that $(T(t))_{t\geq 0}$ is a hypercyclic $C_{0}$-semigroup on $E.$ Then there exist $x\in E$ and a strictly increasing sequence $(m_{n})_{n\in {\mathbb N}}$ of positive integers
such that for each open non-empty subset $V$ of $E$ and for each $t>0$ one has $\underline{d}_{m_{n}}(\{ k\in {\mathbb N} : T(kt) \in V\})=+\infty.$
\end{proposition}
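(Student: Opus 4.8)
The plan is to manufacture the pair $(x,(m_{n}))$ once and for all with Proposition \ref{markknop}, and then to transfer the equality $\underline{d}_{f}(\cdot)=+\infty$ from the full semigroup to each ray $\{kt_{0}:k\in{\mathbb N}\}$ by a quantitative rereading of the implication (i) $\Rightarrow$ (ii) of Theorem \ref{f-freq-21.}; the merely qualitative version of this transfer is precisely the equivalence recorded just before the statement, so only the sharpening to ``$+\infty$'' requires new work.

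First I would apply Proposition \ref{markknop} with $\Delta=[0,\infty)$: it yields a hypercyclic vector $x\in E$ and an increasing $f:[0,\infty)\rightarrow[1,\infty)$ with $\underline{d}_{f}(\{t\geq 0:T(t)x\in V\})=+\infty$ for every open non-empty $V\subseteq E$. Inspecting the construction $f=g\circ f_{0}$ in that proof, one may take $f$ to be integer-valued at the integers with $(f(k))_{k}$ strictly increasing (choose the $f_{0}(N)$ to be integers and replace $g$ by a strictly increasing integer-valued function on $\{f_{0}(k):k\in{\mathbb N}_{0}\}$, extended increasingly to $[1,\infty)$). Put $m_{k}:=f(k)$, $k\in{\mathbb N}$; this is the desired strictly increasing sequence of positive integers. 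Since $m(\Delta_{f(t)})=f(t)$ when $\Delta=[0,\infty)$, the equality $\underline{d}_{f}(\cdot)=+\infty$ forces $\liminf_{t\to\infty}f(t)/t=+\infty$, hence $\liminf_{k\to\infty}m_{k}/k=+\infty$; in particular the hypothesis \eqref{baqart} of Theorem \ref{f-freq-21.} is satisfied.

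Now fix $t_{0}>0$ and an open non-empty $V\subseteq E$, and pick $y\in V$ together with an open balanced neighbourhood $U$ of the origin with $y+U\subseteq V$. I would rerun the argument establishing (i) $\Rightarrow$ (ii) of Theorem \ref{f-freq-21.} for the semigroup $(T(t))_{t\geq 0}$, the operator $T(t_{0})$ and the $f$-frequently hypercyclic vector $x$, but keeping the constant coming from the continuous density explicit throughout. As there, one fixes a subdivision parameter $k\in{\mathbb N}$ and integers $n_{j}$ with $T(t_{0}j/k)x\in y+U'$ for $j=0,\dots,k-1$ (here \cite[Theorem 2.3]{cone2} is what makes $T(t_{0}j/k)x$ a hypercyclic vector of $T(t_{0})$), and then an open balanced neighbourhood $V_{0}$ of the origin with $T(s)(V_{0})\subseteq U'$ for $s\leq N_{0}$. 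The key point is that $x+V_{0}$ is an open neighbourhood of $x$, so the previous paragraph gives $m(\{t\in[0,m_{N}]:T(t)x-x\in V_{0}\})\geq CN$ for all large $N$, with $C$ as large as we please. Feeding this bound into the covering/translation estimate built on the sets $I_{j}$ in that proof shows that $m(\{t\in[0,m_{N}+N_{0}+1]:T(t)x-y\in U,\ t\in I_{k-1}\})$ is, for all large $N$, at least a fixed positive multiple of $CN$ (depending on $k$ but not on $C$ or $N$), and the final step of that proof then produces at least $\lfloor c'N\rfloor$ integers $\ell$ with $\ell t_{0}\leq m_{N}$ (so $\ell\leq m_{N}$ after the harmless rescaling for $t_{0}<1$ used there) and $T(\ell t_{0})x\in y+U\subseteq V$, where $c'$ is again a fixed positive multiple of $C$. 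Letting $C\to\infty$ yields $\underline{d}_{m_{n}}(\{k\in{\mathbb N}:T(kt_{0})x\in V\})=+\infty$; since neither $x$ nor $(m_{n})$ depends on $t_{0}$ or $V$, this completes the proof.

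The step I expect to be the main obstacle is the quantitative rereading of the proof of Theorem \ref{f-freq-21.}: one must verify that the measure estimate there really scales linearly in the continuous density constant $C$---rather than being absorbed, as it is there, where \eqref{baqart} is invoked only to keep the resulting $(m_{k})$-density nonzero---and that the passage from the real return set to its integer subnet along $\{kt_{0}:k\in{\mathbb N}\}$, including the mild rescaling needed when $t_{0}<1$ so that the constructed multiples lie in $[1,m_{N}]$, carries the factor $C$ through. Once that dependence is exposed, letting $C\to\infty$ is immediate and no further idea is needed.
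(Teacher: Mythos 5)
Your proposal is correct and follows essentially the same route as the paper, which establishes this proposition only by a one-sentence pointer to exactly the ingredients you use: the construction of Proposition \ref{markknop} to produce $x$ and $(m_{n})$ with infinite lower $f$-density of every return set, followed by a quantitative rereading of the proof of Theorem \ref{f-freq-21.} (via Proposition \ref{manic}) to transfer the infinite density to each discretization $(T(kt))_{k\in{\mathbb N}}$. Your explicit tracking of the constant $C$ through the covering argument, and your flagging of the $t_{0}<1$ rescaling (which can be settled by imposing, say, $m_{N+1}\geq 2m_{N}$ in the construction of Proposition \ref{markknop}), is in fact more detail than the paper itself supplies.
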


We refer the reader to \cite{jaker}, \cite{kunze} and \cite{kazimir} for the notion of Pettis integrability in locally convex spaces. We close this section with the following observation:
\begin{remark} \label{nrem}
Since the assertions of  \cite[Theorem 4.3, Corollary 4.4]{man-peris} remain true in the setting of Fr\' echet spaces (see e.g. \cite[pp. 281-282, 291]{pettis}) and since the infinitesimal generator of a strongly continuous semigroup $(T(t))_{t\geq 0}$ on a Fr\'echet space is densely defined, the proof of \cite[Theorem 2.2]{man-peris} can be repated verbatim in order to see that
this statement holds true for $C_{0}$-semigroups in infinite-dimensional separable Fr\' echet spaces. The same holds with the statements of \cite[Proposition 2.7, Proposition 2.8]{man-peris}.
\end{remark}

\section[Generalized frequent hypercyclicity for translation semigroups...]{Generalized frequent hypercyclicity for translation semigroups and semigroups induced by semiflows}\label{srboljube}

In this section, we will use the same notion and notation as in Subsection \ref{srbo}. Define $\tilde{\rho} : {\mathbb C} \rightarrow [0,\infty)$ by $\tilde{\rho}(x):=\rho(x),$ $x\in \Delta$ and $\tilde{\rho}(x):=0,$ otherwise.

%We start our work by stating the following result:

\begin{theorem}\label{dov}
Assume that $1\leq p<\infty,$ $E:=
L^{p}_{\rho}(\Delta, {\mathbb K}),$ $t_{0}\in \Delta' \setminus \{0\},$ $f : [0,\infty) \rightarrow [1,\infty)$ is an increasing mapping and $m_{k}=f(k),$ $k\in {\mathbb N}_{0}.$
Suppose that, for every $k\in {\mathbb N}$ and $\delta >0,$ there exist only finite number $c_{k,\delta}\in {\mathbb N}$ of tuples $(n_{1},n_{2})\in {\mathbb N}_{0}^{2}$
such that $n_{1}\neq n_{2},$
\begin{align}\label{lo-le}
\Bigl| \lfloor m_{k + n_{1}} \rfloor -\lfloor m_{k +n_{2}} \rfloor\Bigr| \leq \delta , \mbox{ and }\Bigl| \lfloor m_{k - n_{1}} \rfloor -\lfloor m_{k -n_{2}} \rfloor\Bigr| \leq \delta,\mbox{ provided }k\geq \max \bigl(n_{1},n_{2}\bigr),
\end{align}
as well as that the sequence $(c_{k,\delta})_{k\in {\mathbb N}}$ is bounded for every fixed number $\delta>0.$
Assume
that
the following conditions hold for any compact subset $K$ of $\Delta$:
\begin{itemize}
\item[(i)] The series $\sum_{n=1}^{k}\int_{K}\tilde{\rho}(x-t_{0}(\lfloor m_{k} \rfloor -\lfloor m_{k-n} \rfloor))\, dx$
converges unconditionally, uniformly in $k\in {\mathbb N}.$
\item[(ii)] The series $\sum_{n=1}^{\infty}\int_{ K}\tilde{\rho}(x-t_{0}(\lfloor m_{k+n} \rfloor -\lfloor m_{k} \rfloor))\, dx$ converges unconditionally, uniformly in $k\in {\mathbb N}.$
\item[(iii)] The series $\sum_{n=1}^{\infty}\int_{K}\rho(x+t_{0}\lfloor m_{n} \rfloor)\, dx$ converges unconditionally.
\end{itemize}
Then $(T(t))_{t\in \Delta'}$ is $f$-frequently hypercyclic, $(T(te^{i\arg(t_{0})}))_{t\geq 0}$ is $f$-frequently hypercyclic and the operator $T(t_{0})$ is l-$(m_{k})$-frequently hypercyclic.

In particular, if $f(t)=t^q+1$, $t\geq 0$, $q>1$ and $\tilde \rho(t)=(1+|t|)^{-s},$ $t\geq 0,$ where $qs>1,$ then
\eqref{lo-le} holds as well as \emph{(i)-(iii)}. Thus, $(T(t))_{t\in \Delta'}$ satisfies all the conclusions given above.
\end{theorem}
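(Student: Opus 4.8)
The plan is to deduce the statement from Theorem~\ref{oma} by equipping the translation semigroup with the usual ``pseudo-inverse'' operators. Take $E_{0}$ to be the set of all continuous compactly supported functions $f:\Delta\to\mathbb{K}$, which is dense in $E=L^{p}_{\rho}(\Delta,\mathbb{K})$, and for $n\in\mathbb{N}_{0}$ define $S_{n}f:\Delta\to\mathbb{K}$ by $(S_{n}f)(x):=f(x-t_{0}n)$ for $x\in t_{0}n+supp(f)$ and $(S_{n}f)(x):=0$ otherwise. Since $\Delta$ is a convex cone containing the origin and $t_{0}\in\Delta'\subseteq\Delta$, the set $t_{0}n+supp(f)$ is a compact subset of $\Delta$, so $S_{n}f\in E$; moreover $T(t_{0}n)S_{n}f=f$ identically on $\Delta$, so condition (iv) of Theorem~\ref{oma} holds (in fact with equality for every $n$, not merely in the limit). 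It then remains to verify conditions (i)--(iii) of Theorem~\ref{oma} for every $y=f\in E_{0}$ and to invoke that theorem with the index set $\Delta'$ (note $t_{0}\in\Delta'\setminus\{0\}$), whose conclusion is exactly the first assertion of the theorem.

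For $f\in E_{0}$ with $K:=supp(f)$, a direct computation, using $\lfloor m_{k}\rfloor\ge\lfloor m_{k-n}\rfloor$ and $\lfloor m_{k+n}\rfloor\ge\lfloor m_{k}\rfloor$ (as $f$ is increasing), shows that each summand of the three series in Theorem~\ref{oma} is a translate of $f$ along the line $\mathbb{R}t_{0}$, extended by zero outside $\Delta$; a change of variables then yields
\[
\bigl\|T(t_{0}\lfloor m_{k}\rfloor)S_{\lfloor m_{k-n}\rfloor}f\bigr\|_{p}^{p}\le\|f\|_{\infty}^{p}\int_{K}\tilde{\rho}\bigl(x-t_{0}(\lfloor m_{k}\rfloor-\lfloor m_{k-n}\rfloor)\bigr)\,dx,
\]
together with analogous estimates for the summands of (ii) and (iii); the right-hand sides are precisely $\|f\|_{\infty}^{p}$ times the summands appearing in Theorem~\ref{dov}(i)--(iii). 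When $p=1$ this already completes the verification: unconditional convergence of the vector series, uniformly in $k$, follows at once from unconditional convergence of the scalar series of norms, which is exactly hypotheses (i)--(iii).

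The delicate case is $p>1$, where $\sum_{n}\|v_{n}\|_{p}^{p}<\infty$ (writing $v_{n}$ for the $n$-th summand of the series at hand) no longer implies that $\sum_{n}v_{n}$ converges; this is where \eqref{lo-le} is used. One observes that the $v_{n}$ are supported on translates of $K$, and two of these supports intersect only when the corresponding floor-differences are at most $\mathrm{diam}(K)/|t_{0}|$ apart, so by \eqref{lo-le} and the boundedness of $(c_{k,\delta})_{k}$ these supports overlap with multiplicity bounded by a constant $c$ independent of $k$. Hölder's inequality then gives $\|\sum_{n\in F}v_{n}\|_{p}^{p}\le c^{p-1}\sum_{n\in F}\|v_{n}\|_{p}^{p}$ for every finite $F$, so the required unconditional convergence, uniformly in $k$, again follows from the scalar estimates. (Alternatively, $L^{p}_{\rho}(\Delta,\mathbb{K})$ is reflexive and hence contains no copy of $c_{0}$, which reduces unconditional convergence to weak unconditional Cauchyness tested against functionals from $L^{p'}_{\rho}(\Delta,\mathbb{K})$.) I expect the main obstacle to lie precisely here --- keeping the overlap multiplicity uniform in $k$ while matching the norms of the summands with the integrals of Theorem~\ref{dov}; once this is done, Theorem~\ref{oma} delivers all three conclusions at once.

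For the ``in particular'' part one checks the hypotheses by hand with $f(t)=t^{q}+1$ ($q>1$) and $\tilde{\rho}(t)=(1+|t|)^{-s}$, $qs>1$. By the mean value theorem $\lfloor m_{k+n_{2}}\rfloor-\lfloor m_{k+n_{1}}\rfloor\ge q(k+n_{1})^{q-1}(n_{2}-n_{1})-1$ for $n_{1}<n_{2}$, and similarly for the $\lfloor m_{k-n}\rfloor$ differences, so the inequalities in \eqref{lo-le} can hold only for boundedly many pairs, indeed for no pair once $k$ is large; thus \eqref{lo-le} holds with $(c_{k,\delta})_{k}$ bounded. Using $\lfloor m_{n}\rfloor\ge n^{q}$ and $(k+n)^{q}-k^{q}\ge n^{q}$ for $q\ge1$, each series in (i)--(iii) restricted to a compact $K$ is dominated by a constant multiple of $\sum_{n}n^{-qs}$, which converges since $qs>1$ (and the summands in (i)--(ii) even vanish for large $k$, as the shifted copies of $K$ then leave $\Delta$). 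Hence the first part applies and all the stated conclusions follow.
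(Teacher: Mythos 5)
Your proposal is correct and follows essentially the same route as the paper: both reduce to Theorem~\ref{oma} with $E_{0}=C_{c}(\Delta,\mathbb{K})$ and the backward-translation pseudo-inverses $S_{\lfloor m_{n}\rfloor}$, both control the overlap multiplicity of the translated supports via \eqref{lo-le} and apply H\"older with the constant $c_{k,\delta}^{(p-1)/p}$ to pass from the scalar hypotheses (i)--(iii) to unconditional convergence of the vector series uniformly in $k$, and both verify the particular case by the mean value theorem together with $(a+b)^{q}\geq a^{q}+b^{q}$. No gaps.
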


\begin{proof}
	Without loss of generality, we may assume that $\Delta'=\Delta.$

First, 	we will prove the particular case.
Assume that  the numbers $k\in {\mathbb N}$ and $\delta>0$ are given as well as that $| \lfloor m_{k + n_{1}} \rfloor -\lfloor m_{k +n_{2}} \rfloor| \leq \delta$ for some positive integers $n_{1},\ n_{2}\in {\mathbb N}$ such that $n_{1}< n_{2}.$ Then $|(k+n_{1})^{q}-(k+n_{2})^{q}|\leq \delta +2$ and since, by the mean value theorem, $|(k+n_{1})^{q}-(k+n_{2})^{q}|\geq q|n_{1}-n_{2}|(k+n_{1})^{q-1},$ we get $q(k+n_{1})^{q-1}\leq \delta +2,$ which immediately implies
$$
n_{1}\leq \Bigl( \frac{\delta+2}{q} \Bigr)^{1/(q-1)}-k\ \mbox{ and }\  n_{2}\leq \Bigl( \delta +2+\bigl[(\delta+2)/q\bigr]^{q/(q-1)} \Bigr)^{1/q}-k.
$$
Hence, there exist only finite number $c_{k,\delta}'\in {\mathbb N}$ of tuples $(n_{1},n_{2})\in {\mathbb N}_{0}^{2}$
such that $n_{1}\neq n_{2},$ the first inequality in \eqref{lo-le} holds and the sequence $(c_{k,\delta}')_{k\in {\mathbb N}}$ is bounded for every fixed number $\delta>0.$ Similarly, the assumption $| \lfloor m_{k - n_{1}} \rfloor -\lfloor m_{k -n_{2}} \rfloor| \leq \delta$ for $k\geq \max (n_{1},n_{2})$ implies
\begin{align}\label{lako-je}
[n_{1},n_{2}]\subseteq \Biggl[ k-\Bigl( \delta +2+ [(\delta+2)/q]^{1/q}\Bigr)^{q/(q-1)}, k-\Bigl( \frac{\delta +2}{q}\Bigr)^{1/(q-1)}\Biggr],
\end{align}
on account of which
there exist only finite number $c_{k,\delta}''\in {\mathbb N}$ of tuples $(n_{1},n_{2})\in {\mathbb N}_{0}^{2}$
such that $n_{1}\neq n_{2},$ the second inequality in \eqref{lo-le} holds and the sequence $(c_{k,\delta}'')_{k\in {\mathbb N}}$ is bounded for every fixed number $\delta>0$ ($n_{1},\ n_{2}$ are integers and the interval appearing on the right hand side of \eqref{lako-je} contains only finite number of integers, which is independent of $k\in {\mathbb N}$).
Suppose now that $qs>1.$
Since $t_0\neq 0,$ elementary inequalities give
$$\frac{1}{(1+|x-t_0(\lfloor k^q\rfloor-\lfloor (k-n)^q \rfloor )|)^s}\sim \frac{1}{|t_0|^s(\lfloor k^q\rfloor-\lfloor (k-n)^q\rfloor)^s}\leq \frac{1}{|t_0|^sn^{qs}}
$$
(since $(a+b)^q\geq a^q+b^q$), uniformly in $x\in K.$ Thus, (i) holds; the proofs of (ii) and (iii) follow in the same way.
Thus, in this case, $(T(t))_{t\in \Delta'}$ satisfies all the assumptions of the main part of the theorem which will be proved now.

The main result is a  consequence of Theorem \ref{oma}, with $E_{0}$ being the set consisting of all continuous functions with compact support and the mapping $S_{\lfloor m_{n} \rfloor} f: \Delta \rightarrow {\mathbb K}$ given by $(S_{\lfloor m_{n} \rfloor} f)(x):=f(x-t_{0}\lfloor m_{n} \rfloor),$ $x\in t_{0}\lfloor m_{n} \rfloor+supp(f)$ and $(S_{\lfloor m_{n} \rfloor} f)(x):=0,$ otherwise ($f\in E_{0},$ $n\in {\mathbb N}$).
Then
$T(t_{0}\lfloor m_{n} \rfloor)S_{\lfloor m_{n} \rfloor}f=f$ for all $f\in E_{0},\ n\in {\mathbb N}$ and the conditions (i)-(iii) enable one to see that the requirements of Theorem \ref{oma} are satisfied. Strictly speaking, let a function $f\in E_{0}$ be given and let $supp(f)=K\subseteq \Delta.$ Set
$\delta :=\max\{2|t| : t\in K\}/|t_{0}|.$
To see that the series $\sum_{n=1}^{k}T(t_{0}\lfloor m_{k}\rfloor)S_{\lfloor m_{k-n}\rfloor}f$ converges unconditionally, uniformly in $k\in {\mathbb N},$ observe first that
\begin{align*}
\Biggl\|\sum_{n=1}^{k}& T(t_{0}\lfloor m_{k}\rfloor)S_{\lfloor m_{k-n}\rfloor}f\Biggl\|
\\ & =\Biggl( \int_{\Delta}\Biggl|\sum_{n=1}^{k}f\bigl(x+t_{0}(\lfloor m_{k} \rfloor -\lfloor m_{k-n} \rfloor)\bigr)\chi_{K-t_{0}(\lfloor m_{k} \rfloor -\lfloor m_{k-n} \rfloor)}(x) \Biggr|^{p}\rho(x)\, dx\Biggr)^{1/p}.
\end{align*}
For any fixed number $x\in \Delta,$ the sum
$$
\sum_{n=1}^{k}f\bigl(x+t_{0}(\lfloor m_{k} \rfloor -\lfloor m_{k-n} \rfloor)\bigr)\chi_{K-t_{0}(\lfloor m_{k} \rfloor -\lfloor m_{k-n} \rfloor)}(x)
$$
is consisted of at most $c_{k,\delta}$ addends since the assumption $x\in K-t_{0}(\lfloor m_{k} \rfloor -\lfloor m_{k-n_{1}} \rfloor)$ and $x\in K-t_{0}(\lfloor m_{k} \rfloor -\lfloor m_{k-n_{2}} \rfloor)$ for some integers $n_{1},\ n_{2} \in [1,k]$ implies $| \lfloor m_{k+n_{1}} \rfloor -\lfloor m_{k+n_{2}} \rfloor| \leq \delta .$
Hence,
\begin{align*}
& \Biggl\|\sum_{n=1}^{k} T(t_{0}\lfloor m_{k}\rfloor)S_{\lfloor m_{k-n}\rfloor}f\Biggl\|
\\ & \leq c_{k,\delta}^{(p-1)/p}\Biggl( \int_{\Delta}\sum_{n=1}^{k}\bigl|f\bigl(x+t_{0}(\lfloor m_{k} \rfloor -\lfloor m_{k-n} \rfloor)\bigr) \chi_{K-t_{0}(\lfloor m_{k} \rfloor -\lfloor m_{k-n} \rfloor)}(x) \bigr|^{p}\tilde{\rho}(x)\, dx\Biggr)^{1/p}
\\ & =c_{k,\delta}^{(p-1)/p}\Biggl(\sum_{n=1}^{k} \int_{K \cap (K +t_{0}(\lfloor m_{k} \rfloor -\lfloor m_{k-n} \rfloor))}\bigl|f(x)\bigr|^{p}\tilde{\rho}\bigl(x-t_{0}(\lfloor m_{k} \rfloor -\lfloor m_{k-n} \rfloor)\bigr)\, dx\Biggr)^{1/p}
\\ & \leq c_{k,\delta}^{(p-1)/p}\Biggl(\sum_{n=1}^{k} \int_{K}\bigl|f(x)\bigr|^{p}\tilde{\rho}\bigl(x-t_{0}(\lfloor m_{k} \rfloor -\lfloor m_{k-n} \rfloor)\bigr)\, dx\Biggr)^{1/p}
\\ & \leq  c_{k,\delta}^{(p-1)/p}\|f\|_{\infty}\Biggl(\sum_{n=1}^{k}\int_{K}\tilde{\rho} \bigl(x-t_{0}(\lfloor m_{k} \rfloor -\lfloor m_{k-n} \rfloor)\bigr)\, dx\Biggr)^{1/p}.
\end{align*}
Similarly, the series $\sum_{n=1}^{\infty}T(t_{0}\lfloor m_{k}\rfloor)S_{\lfloor m_{k+n} \rfloor}f$ converges unconditionally, uniformly in $k\in {\mathbb N},$ due to (ii) and the monotone convergence theorem:
\begin{align*}
\Biggl\|&\sum_{n=1}^{\infty}T(t_{0}\lfloor m_{k}\rfloor)S_{\lfloor m_{k+n} \rfloor}f\Biggl\|
\\ &
\leq c_{k,\delta}^{(p-1)/p}\Biggl( \int_{\Delta}\sum_{n=1}^{\infty}\bigl|f\bigl(x+t_{0}(\lfloor m_{k+n} \rfloor -\lfloor m_{k} \rfloor)\bigr) \chi_{K-t_{0}(\lfloor m_{k+n} \rfloor -\lfloor m_{k} \rfloor)}(x) \bigr|^{p}\tilde{\rho}(x)\, dx\Biggr)^{1/p}
\\ & =c_{k,\delta}^{(p-1)/p}\Biggl( \sum_{n=1}^{\infty} \int_{\Delta}\bigl|f\bigl(x+t_{0}(\lfloor m_{k+n} \rfloor -\lfloor m_{k} \rfloor)\bigr) \chi_{K-t_{0}(\lfloor m_{k+n} \rfloor -\lfloor m_{k} \rfloor)}(x) \bigr|^{p}\tilde{\rho}(x)\, dx\Biggr)^{1/p}
\\ &
\leq c_{k,\delta}^{(p-1)/p}\Biggl(\sum_{n=1}^{\infty} \int_{K}\bigl|f(x)\bigr|^{p}\tilde{\rho} \bigl(x-t_{0}(\lfloor m_{k+n} \rfloor -\lfloor m_{k} \rfloor)\bigr)\, dx\Biggr)^{1/p}
\\ & \leq  c_{k,\delta}^{(p-1)/p}\|f\|_{\infty}\Biggl(\sum_{n=1}^{\infty}\int_{K}\tilde{\rho} \bigl(x-t_{0}(\lfloor m_{k+n} \rfloor -\lfloor m_{k} \rfloor)\bigr)\, dx\Biggr)^{1/p}.
\end{align*}
Since \eqref{lo-le} holds, for any compact set $K$ of $\Delta$ there exists a finite number $n_{K}\in {\mathbb N}$ such that any point $x\in \Delta$ is contained at most $n_{K}$ sets of the form $t_{0}\lfloor m_{n} \rfloor +K,$ where $n\in {\mathbb N}.$ Using this condition and
the condition (iii), we can similarly verify that the series $\sum_{n=1}^{\infty}S_{\lfloor m_{n} \rfloor}f$ converges unconditionally.
\end{proof}

\begin{remark}\label{hnme}
	The condition \eqref{lo-le} does not hold for  frequent hypercyclicity ($q=1$).
\end{remark}

\begin{remark}\label{hnme-f}
	Let $f : [0,\infty) \rightarrow [1,\infty)$ be two times continuously differentiable function satisfying additionally that $f^{\prime \prime}(x)\geq 0$ for all $x\geq 0,$ as well as $\lim_{x\rightarrow +\infty}f(x)=\lim_{x\rightarrow +\infty}f^{\prime}(x)=+\infty .$
	Arguing in the same manner, we can prove that
	the condition \eqref{lo-le} holds for $f$-frequent hypercyclicity, i.e., for any numbers $k\in {\mathbb N}$ and $\delta>0$ given in advance,
	there exist only finite number $c_{k,\delta}\in {\mathbb N}$ of tuples $(n_{1},n_{2})\in {\mathbb N}_{0}^{2}$
	such that $n_{1}\neq n_{2},$ \eqref{lo-le} holds and the sequence $(c_{k,\delta})_{k\in {\mathbb N}}$ is bounded for every fixed number $\delta>0.$
	By a careful analysis, one can find a corresponding $\tilde {\rho}(\cdot)$ so that conditions (i)-(iii) automatically hold. We will not consider these cases.
\end{remark}

For translation semigroups acting on $
C_{0,\rho}(\Delta, {\mathbb K}),$ we have the following result:

\begin{theorem}\label{dovs}
Assume that $E:=
C_{0,\rho}(\Delta, {\mathbb K}),$ $f : [0,\infty) \rightarrow [1,\infty)$ is an increasing mapping and $m_{k}=f(k),$ $k\in {\mathbb N}_{0}.$ Suppose that, for every $k\in {\mathbb N}$ and $\delta >0,$ there exist only finite number $c_{k,\delta}\in {\mathbb N}$ of tuples $(n_{1},n_{2})\in {\mathbb N}_{0}^{2}$
such that $n_{1}\neq n_{2},$
\eqref{lo-le} holds
as well as that the sequence $(c_{k,\delta})_{k\in {\mathbb N}}$ is bounded for every fixed number $\delta>0.$
Suppose that there is a number $t_{0} \in \Delta' \setminus \{0\}$ such that
the following conditions hold for any compact subset $K$ of $\Delta$:
\begin{itemize}
\item[(i)] For any $\epsilon>0$ there is an integer
$N\in {\mathbb N}$ such that for any finite set $F\subseteq [N,\infty) \cap {\mathbb N}$ and for any $k\in {\mathbb N}$ we have
$$
\sup_{n\in F,k\geq n, x\in K}\tilde{\rho} \bigl(x-t_{0}(\lfloor m_{k} \rfloor -\lfloor m_{k-n} \rfloor)\bigr) \in L(0,\epsilon).
$$
\item[(ii)] For any $\epsilon>0$ there is an integer
$N\in {\mathbb N}$ such that for any finite set $F\subseteq [N,\infty) \cap {\mathbb N}$ and for any $k\in {\mathbb N}$ we have
$$
\sup_{n\in F,x\in K}\tilde{\rho} \bigl(x-t_{0}(\lfloor m_{k+n} \rfloor -\lfloor m_{k} \rfloor)\bigr) \in L(0,\epsilon).
$$
\item[(iii)] For any $\epsilon>0$ there is an integer
$N\in {\mathbb N}$ such that for any finite set $F\subseteq [N,\infty) \cap {\mathbb N}$ we have
$$
\sup_{n\in F,x\in K}\rho \bigl(x+t_{0}\lfloor m_{n} \rfloor \bigr) \in L(0,\epsilon).
$$
\end{itemize}
Then $(T(t))_{t\in \Delta'}$ is $f$-frequently hypercyclic, $(T(te^{i\arg(t_{0})}))_{t\geq 0}$ is $f$-frequently hypercyclic and the operator $T(t_{0})$ is l-$(m_{k})$-frequently hypercyclic.

In particular, if $f(t)=t^q+1$, $t\geq 0$, $q>1$ and $\tilde \rho(t)=(1+|t|)^{-s},$ $t\geq 0,$ where $qs>1,$ then
\eqref{lo-le} holds as well as \emph{(i)-(iii)}. Thus, $(T(t))_{t\in \Delta'}$ satisfies all the conclusions given above.
\end{theorem}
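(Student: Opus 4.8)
The plan is to follow the proof of Theorem \ref{dov} almost line by line, replacing the $L^{p}$-norm by the sup-norm of $C_{0,\rho}(\Delta,{\mathbb K})$; in fact the estimates become simpler, since one no longer needs the auxiliary combinatorial factor $c_{k,\delta}^{(p-1)/p}$ coming from H\"older's inequality. As there, we may assume that $\Delta'=\Delta$, and the asserted conclusions will be obtained by invoking Theorem \ref{oma} with $t_{0}$ the given point, with $E_{0}$ the (dense) subspace of $C_{0,\rho}(\Delta,{\mathbb K})$ consisting of continuous, compactly supported functions, and with $(S_{\lfloor m_{n}\rfloor}f)(x):=f(x-t_{0}\lfloor m_{n}\rfloor)$ for $x\in t_{0}\lfloor m_{n}\rfloor+supp(f)$ and $(S_{\lfloor m_{n}\rfloor}f)(x):=0$ otherwise, exactly as in the proof of Theorem \ref{dov}. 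Since $T(t_{0}\lfloor m_{n}\rfloor)S_{\lfloor m_{n}\rfloor}f=f$, condition (iv) of Theorem \ref{oma} is automatic, so everything reduces to verifying conditions (i)--(iii) of Theorem \ref{oma} for every $f\in E_{0}$.

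For the particular case $f(t)=t^{q}+1$ $(q>1)$, $\tilde\rho(t)=(1+|t|)^{-s}$ with $qs>1$, the verification that \eqref{lo-le} holds and that $(c_{k,\delta})_{k}$ is bounded is \emph{verbatim} the mean value theorem argument from the proof of Theorem \ref{dov}, since it involves only $f$. For (i)--(iii) one uses the superadditivity inequality $(a+b)^{q}\geq a^{q}+b^{q}$ $(a,b\geq 0$, $q\geq 1)$, which gives $\lfloor m_{k}\rfloor-\lfloor m_{k-n}\rfloor\geq n^{q}-1$, $\lfloor m_{k+n}\rfloor-\lfloor m_{k}\rfloor\geq n^{q}-1$ and $\lfloor m_{n}\rfloor\geq n^{q}$; hence, uniformly in $k$ and in $x$ ranging over a fixed compact $K\subseteq\Delta$, each of $\tilde\rho(x-t_{0}(\lfloor m_{k}\rfloor-\lfloor m_{k-n}\rfloor))$, $\tilde\rho(x-t_{0}(\lfloor m_{k+n}\rfloor-\lfloor m_{k}\rfloor))$ and $\rho(x+t_{0}\lfloor m_{n}\rfloor)$ is bounded by a constant multiple of $|t_{0}|^{-s}n^{-qs}$ for all $n$ large. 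Since $qs>1$ we have $\sup_{n\geq N}n^{-qs}\to 0$ as $N\to\infty$, which is precisely conditions (i)--(iii) (with $L(0,\epsilon)$ understood in ${\mathbb K}$); thus the hypotheses of the main part are met.

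For the main part, fix $f\in E_{0}$ with $supp(f)=K$ and put $\delta:=\max\{2|t|:t\in K\}/|t_{0}|$. Expanding $T(t_{0}\lfloor m_{k}\rfloor)S_{\lfloor m_{k-n}\rfloor}f$ as in the proof of Theorem \ref{dov}, one sees that at each point of $\Delta$ the function $\sum_{n\in F}T(t_{0}\lfloor m_{k}\rfloor)S_{\lfloor m_{k-n}\rfloor}f$ (for a finite $F\subseteq{\mathbb N}$) has at most $c_{k,\delta}$ nonzero summands --- the overlap argument being that if a point belongs to the translated supports indexed by $n_{1}$ and $n_{2}$, then $|\lfloor m_{k-n_{1}}\rfloor-\lfloor m_{k-n_{2}}\rfloor|\leq\delta$ --- while each such summand, after multiplication by $\rho$, is dominated in absolute value by $\|f\|_{\infty}\,\tilde\rho$ evaluated at the corresponding translate of a point of $K$. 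Consequently
\begin{align*}
\Bigl\|\sum_{n\in F}T(t_{0}\lfloor m_{k}\rfloor)S_{\lfloor m_{k-n}\rfloor}f\Bigr\|\leq\Bigl(\sup_{k}c_{k,\delta}\Bigr)\|f\|_{\infty}\cdot\sup_{n\in F,\ k\geq n,\ x\in K}\tilde\rho\bigl(x-t_{0}(\lfloor m_{k}\rfloor-\lfloor m_{k-n}\rfloor)\bigr),
\end{align*}
and, since $\sup_{k}c_{k,\delta}<\infty$ by hypothesis, condition (i) of the statement forces the left-hand side to be uniformly (in $k$) small once $F\subseteq[N,\infty)$ with $N$ large; this is condition (i) of Theorem \ref{oma}. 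Conditions (ii) and (iii) of Theorem \ref{oma} follow in exactly the same way from conditions (ii) and (iii) of the statement, the bounded overlap being supplied, for (ii), by the first inequality in \eqref{lo-le}, and, for (iii), by the fact --- noted in the proof of Theorem \ref{dov} --- that \eqref{lo-le} makes every point of $\Delta$ lie in at most $n_{K}$ of the translates $t_{0}\lfloor m_{n}\rfloor+K$. Theorem \ref{oma} then yields all the conclusions.

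The only point requiring care is the passage, inside $C_{0,\rho}$, from the pointwise-supremum hypotheses (i)--(iii) to unconditional convergence of the corresponding operator series uniformly in $k$; this is where the boundedness of $(c_{k,\delta})_{k}$ is essential --- it is what, in the sup-norm, plays the role of the factor $c_{k,\delta}^{(p-1)/p}$ in the $L^{p}$ computation --- and beyond this the argument is a transcription of the proof of Theorem \ref{dov}.
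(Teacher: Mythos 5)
Your proposal is correct and follows essentially the same route as the paper: the paper's own proof likewise reduces to Theorem \ref{oma} with the same $E_{0}$ and the same maps $S_{\lfloor m_{n}\rfloor}$, and derives exactly the estimate $\|\sum_{1\leq n\leq k,\,n\in F}T(t_{0}\lfloor m_{k}\rfloor)S_{\lfloor m_{k-n}\rfloor}f\|\leq c_{k,\delta}\|f\|_{\infty}\sup_{n\in F,\,k\geq n,\,x\in K}\tilde{\rho}(x-t_{0}(\lfloor m_{k}\rfloor-\lfloor m_{k-n}\rfloor))$ before invoking hypotheses (i)--(iii), with the particular case handled by the same mean-value-theorem and superadditivity arguments carried over from Theorem \ref{dov}.
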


\begin{proof}
The proof is similar to that of Theorem \ref{dov} and the basic differences are given below (the particular case is the same). Again,
we may assume that $\Delta'=\Delta.$
We can apply Theorem \ref{oma}, with $E_{0}$ being the set consisting of all continuous functions with compact support and the mappings $S_{\lfloor m_{n} \rfloor} \cdot$
defined as in the proof of Theorem \ref{dov} ($n\in {\mathbb N}$).
Then
$T(t_{0}\lfloor m_{n} \rfloor)S_{\lfloor m_{n} \rfloor}f=f$ for all $f\in E_{0},\ n\in {\mathbb N}$ and (i)-(iii) implies the validity of conditions necessary for applying Theorem \ref{oma}. Strictly speaking, suppose that a function $f\in E_{0}$ is given and $supp(f)=K\subseteq \Delta.$ Set
$\delta :=\max\{2|t| : t\in K\}/|t_{0}|.$
To see that the series $\sum_{n=1}^{k}T(t_{0}\lfloor m_{k}\rfloor)S_{\lfloor m_{k-n}\rfloor}f$ converges unconditionally, uniformly in $k\in {\mathbb N},$ observe first that for each finite set $F\subseteq {\mathbb N}$ and each fixed number $x\in \Delta,$ the sum
$$
\sum_{1\leq n \leq k,n\in F}f\bigl(x+t_{0}(\lfloor m_{k} \rfloor -\lfloor m_{k-n} \rfloor)\bigr)\chi_{K-t_{0}(\lfloor m_{k} \rfloor -\lfloor m_{k-n} \rfloor)}(x)
$$
consists at most $c_{k,\delta}$ addends and therefore
\begin{align*}
\Biggl|\sum_{1\leq n \leq k,n\in F}& f\bigl(x+t_{0}(\lfloor m_{k} \rfloor -\lfloor m_{k-n} \rfloor)\bigr)\chi_{K-t_{0}(\lfloor m_{k} \rfloor -\lfloor m_{k-n} \rfloor)}(x)\Biggr|
\\ & \leq c_{k,\delta}\sup_{1\leq n\leq k,n\in F}\Bigl| f\bigl(x+t_{0}(\lfloor m_{k} \rfloor -\lfloor m_{k-n} \rfloor)\bigr)\Bigr|.
\end{align*}
This implies
\begin{align*}
&  \Biggl\|\sum_{1\leq n \leq k,n\in F}T(t_{0}\lfloor m_{k}\rfloor)S_{\lfloor m_{k-n}\rfloor}f\Biggl\|
\\ & =\sup_{x\in \Delta}\Biggl|\sum_{1\leq k\leq n,n\in F}f\bigl(x+t_{0}(\lfloor m_{k} \rfloor -\lfloor m_{k-n} \rfloor)\bigr)\chi_{K-t_{0}(\lfloor m_{k} \rfloor -\lfloor m_{k-n} \rfloor)}(x) \Biggr|\tilde{\rho}(x)
\\ & \leq c_{k,\delta}\sup_{1\leq n\leq k,n\in F,x\in \Delta}\Bigl| f\bigl(x+t_{0}(\lfloor m_{k} \rfloor -\lfloor m_{k-n} \rfloor)\bigr)\Bigr|\tilde{\rho}(x)
\\ &\leq c_{k,\delta}\sup_{1\leq n\leq k,n\in F,x\in K}| f(x)|\tilde{\rho} \bigl(x-t_{0}(\lfloor m_{k} \rfloor -\lfloor m_{k-n} \rfloor)\bigr)
\\ & \leq  c_{k,\delta}\|f\|_{\infty}\sup_{1\leq n\leq k,n\in F,x\in K}\tilde{\rho} \bigl(x-t_{0}(\lfloor m_{k} \rfloor -\lfloor m_{k-n} \rfloor)\bigr).
\end{align*}
Using this estimate and condition (i), we get that the series $\sum_{n=1}^{k}T(t_{0}\lfloor m_{k}\rfloor)S_{\lfloor m_{k-n}\rfloor}f$ converges unconditionally, uniformly in $k\in {\mathbb N}.$
Similarly, we can prove that the series $\sum_{n=1}^{\infty}T(t_{0}\lfloor m_{k}\rfloor)S_{\lfloor m_{k+n} \rfloor}f$ converges unconditionally, uniformly in $k\in {\mathbb N},$ and that the series $\sum_{n=1}^{\infty}S_{\lfloor m_{n} \rfloor}f$ converges unconditionally.
\end{proof}

We can similarly prove the following results continuing our previous analysis from Theorem \ref{oma-niz} (it is only worth noting that we need the condition \eqref{modric} below for proving that for any compact set $K$ of $\Delta$ there exists a finite number $n_{K}\in {\mathbb N}$ such that any point $x\in \Delta$ is contained at most $n_{K}$ sets of the form $t_{\lfloor m_{n} \rfloor}+K,$ where $n\in {\mathbb N}$):

\begin{theorem}\label{dov-niz}
Assume that $1\leq p<\infty,$ $E:=
L^{p}_{\rho}(\Delta, {\mathbb K}),$ $(t_{n})_{n\in {\mathbb N}}$ is a sequence in $\Delta' \setminus \{0\},$
$\delta'>0,$ \eqref{krke-rke} holds with the number $\delta$ replaced therein with the number $\delta',$
there exists an increasing mapping $g : [1,\infty) \rightarrow [1,\infty)$ such that $|t_{n}|\leq g(n)$ for all $n\in {\mathbb N},$
$f : [0,\infty) \rightarrow [1,\infty)$ is an increasing mapping and $m_{k}=f(k),$ $k\in {\mathbb N}_{0}.$
Suppose that, for every $k\in {\mathbb N}$ and $\delta >0,$ there exist only finite number $c_{k,\delta}\in {\mathbb N}$ of tuples $(n_{1},n_{2})\in {\mathbb N}_{0}^{2}$
such that $n_{1}\neq n_{2},$ \eqref{lo-le} holds
and the sequence $(c_{k,\delta})_{k\in {\mathbb N}}$ is bounded for every fixed number $\delta>0.$
Assume that
\begin{align}\label{modric}
\lim_{n\rightarrow \infty}\bigl| t_{\lfloor m_{n+1} \rfloor}-t_{\lfloor m_{n} \rfloor}\bigr|=+\infty
\end{align}
as well as
the following conditions hold for any compact subset $K$ of $\Delta$:
\begin{itemize}
\item[(i)] The sequence $\sum_{n=1}^{k}\int_{K}\tilde{\rho}(x-t_{\lfloor m_{k} \rfloor}+t_{\lfloor m_{k-n} \rfloor})\, dx,$ $ k\in\mathbb N$ converges unconditionally, uniformly in $k\in {\mathbb N}.$
\item[(ii)] The series $\sum_{n=1}^{\infty}\int_{ K}\tilde{\rho}(x-t_{\lfloor m_{k+n} \rfloor}+ t_{\lfloor m_{k} \rfloor})\, dx$ converges unconditionally, uniformly in $k\in {\mathbb N}.$
\item[(iii)] The series $\sum_{n=1}^{\infty}\int_{K}\rho(x+t_{\lfloor m_{n} \rfloor})\, dx$ converges unconditionally.
\end{itemize}
Then $(T(t))_{t\in \Delta'}$ is $(g\circ f)$-frequently hypercyclic and the sequence of operators
$(T_{n}:=T(t_{n}))_{n\in {\mathbb N}}$ is l-$(m_{k})$-frequently hypercyclic.
\end{theorem}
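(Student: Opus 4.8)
The plan is to derive this exactly as Theorem~\ref{dov} was derived from Theorem~\ref{oma}, but invoking Theorem~\ref{oma-niz} in its place. After the harmless reduction to $\Delta'=\Delta$ (one simply relabels $\Delta'$ as $\Delta$, as at the start of the proof of Theorem~\ref{dov}), I would take $E_{0}$ to be the dense subspace of $E=L^{p}_{\rho}(\Delta,\mathbb{K})$ consisting of the continuous compactly supported functions and define $S_{n}f:\Delta\to\mathbb{K}$ by $(S_{n}f)(x):=f(x-t_{n})$ for $x\in t_{n}+\mathrm{supp}(f)$ and $(S_{n}f)(x):=0$ otherwise. Then $T(t_{n})S_{n}f=f$ for every $f\in E_{0}$, so condition~(iv) of Theorem~\ref{oma-niz} is automatic, while condition~(v) is precisely the standing hypotheses \eqref{krke-rke} (with $\delta'$ in place of $\delta$) and $|t_{n}|\le g(n)$. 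Everything therefore reduces to checking conditions (i)--(iii) of Theorem~\ref{oma-niz} for this choice of data; once they are verified, its conclusion gives directly that $(T_{n}=T(t_{n}))_{n\in\mathbb{N}}$ is l-$(m_{k})$-frequently hypercyclic and $(T(t))_{t\in\Delta'}$ is $(g\circ f)$-frequently hypercyclic.

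For the verification I would fix $f\in E_{0}$ with $K:=\mathrm{supp}(f)$ and perform the same change of variables as in the proof of Theorem~\ref{dov}: $T(t_{\lfloor m_{k}\rfloor})S_{\lfloor m_{k-n}\rfloor}f$ is, up to the scalar $\|f\|_{\infty}$, an indicator of a translate of $K$, and one obtains
\[
\bigl\|T(t_{\lfloor m_{k}\rfloor})S_{\lfloor m_{k-n}\rfloor}f\bigr\|^{p}\le\|f\|_{\infty}^{p}\int_{K}\tilde{\rho}\bigl(x-t_{\lfloor m_{k}\rfloor}+t_{\lfloor m_{k-n}\rfloor}\bigr)\,dx,
\]
together with entirely analogous bounds for $\|T(t_{\lfloor m_{k}\rfloor})S_{\lfloor m_{k+n}\rfloor}f\|^{p}$ and $\|S_{\lfloor m_{n}\rfloor}f\|^{p}$ in terms of the integrands appearing in hypotheses (ii) and (iii) respectively. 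When $p=1$ these pointwise norm estimates combined with (i), (ii), (iii) immediately give absolute, hence unconditional, convergence, uniformly in $k$ where required, i.e.\ exactly conditions (i)--(iii) of Theorem~\ref{oma-niz}. When $p>1$ one argues as in Theorem~\ref{dov}: at a fixed $x$, two of the translated copies of $K$ contributing to one of these sums can overlap only for pairs of indices allowed by \eqref{lo-le} (with $\delta$ governed by the diameter of $K$), so the number of nonzero terms at $x$ is at most $c_{k,\delta}$; Hölder's inequality then yields $\|\sum(\cdot)\|^{p}\le c_{k,\delta}^{\,p-1}\sum\|\cdot\|^{p}$, and the assumed boundedness of $(c_{k,\delta})_{k}$ reduces the claim once more to hypotheses (i)--(iii).

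The genuinely new ingredient, and the one I expect to require the most care, is the use of \eqref{modric}: it is needed to upgrade norm convergence of $\sum_{n\ge1}S_{\lfloor m_{n}\rfloor}f$ to unconditional convergence, which in turn requires, for every compact $K\subseteq\Delta$, a constant $n_{K}\in\mathbb{N}$ such that each $x\in\Delta$ lies in at most $n_{K}$ of the sets $t_{\lfloor m_{n}\rfloor}+K$. I would obtain this as follows. By \eqref{modric} we have $t_{\lfloor m_{n+1}\rfloor}\neq t_{\lfloor m_{n}\rfloor}$ for all large $n$, and since the $t_{j}$ are pairwise distinct by \eqref{krke-rke} and $n\mapsto\lfloor m_{n}\rfloor$ is nondecreasing (because $f$ is increasing), the sequence $n\mapsto\lfloor m_{n}\rfloor$ is eventually strictly increasing, hence every integer value is attained only a bounded number of times. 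On the other hand the $\delta'$-separation \eqref{krke-rke} permits only finitely many of the points $t_{j}$, say at most $C_{0}=C_{0}(\delta',K)$ of them, to lie inside any ball of radius $\mathrm{diam}(K)$, and $x\in t_{\lfloor m_{n}\rfloor}+K$ forces $t_{\lfloor m_{n}\rfloor}\in x-K$; combining the two bounds gives the required uniform $n_{K}$. The reason this step is subtler than in Theorem~\ref{dov} is that there the shift times form the arithmetic family $t_{0}\lfloor m_{n}\rfloor$, so the intersection pattern of the translates of $K$ is read off literally from the index differences $|\lfloor m_{k\pm n_{1}}\rfloor-\lfloor m_{k\pm n_{2}}\rfloor|$, whereas here the shift times $t_{\lfloor m_{n}\rfloor}$ are tied to the indices only through \eqref{krke-rke}, \eqref{modric} and $g$; apart from this bookkeeping, the argument is a routine transcription of the proof of Theorem~\ref{dov}.
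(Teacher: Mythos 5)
Your proposal follows exactly the route the paper intends: Theorem \ref{dov-niz} is obtained by transcribing the proof of Theorem \ref{dov} with Theorem \ref{oma-niz} in place of Theorem \ref{oma}, and the paper's only explicit addition is precisely your observation that \eqref{modric} (together with \eqref{krke-rke}) yields the uniform bound $n_{K}$ on how many of the sets $t_{\lfloor m_{n}\rfloor}+K$ can contain a given point. One caveat: for the first two series the overlap of $K-t_{\lfloor m_{k}\rfloor}+t_{\lfloor m_{k-n_{1}}\rfloor}$ and $K-t_{\lfloor m_{k}\rfloor}+t_{\lfloor m_{k-n_{2}}\rfloor}$ forces $|t_{\lfloor m_{k-n_{1}}\rfloor}-t_{\lfloor m_{k-n_{2}}\rfloor}|\leq \mathrm{diam}(K)$ rather than $|\lfloor m_{k-n_{1}}\rfloor-\lfloor m_{k-n_{2}}\rfloor|\leq \delta$, so the uniform bound on the number of nonzero terms at a fixed point should be drawn from the $\delta'$-separation \eqref{krke-rke} combined with the bounded multiplicity of $n\mapsto\lfloor m_{n}\rfloor$ (exactly the argument you give for the third series), and not from \eqref{lo-le} alone.
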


\begin{theorem}\label{dovs-niz}
Assume that $E:=
C_{0,\rho}(\Delta, {\mathbb K}),$ $f : [0,\infty) \rightarrow [1,\infty)$ is an increasing mapping and $m_{k}=f(k),$ $k\in {\mathbb N}_{0}.$ Suppose that, for every $k\in {\mathbb N}$ and $\delta >0,$ there exist only finite number $c_{k,\delta}\in {\mathbb N}$ of tuples $(n_{1},n_{2})\in {\mathbb N}_{0}^{2}$
such that $n_{1}\neq n_{2},$ \eqref{lo-le} holds
and the sequence $(c_{k,\delta})_{k\in {\mathbb N}}$ is bounded for every fixed number $\delta>0.$
Suppose, further, that \eqref{modric} holds as well as that there are a  sequence $(t_{n})_{n\in {\mathbb N}}$ in $\Delta' \setminus \{0\}$ and a number
$\delta'>0$ such that \eqref{krke-rke} holds with the number $\delta$ replaced therein with the number $\delta'.$ Let $g : [1,\infty) \rightarrow [1,\infty)$ be an
increasing mapping such that $|t_{n}|\leq g(n)$ for all $n\in {\mathbb N},$
as well as that
the following conditions hold for any compact subset $K$ of $\Delta$:
\begin{itemize}
\item[(i)] For any $\epsilon>0$ there is an integer
$N\in {\mathbb N}$ such that for any finite set $F\subseteq [N,\infty) \cap {\mathbb N}$ and for any $k\in {\mathbb N}$ we have
$$
\sup_{n\in F,k\geq n, x\in K}\tilde{\rho} \bigl(x-t_{\lfloor m_{k} \rfloor}+t_{\lfloor m_{k-n} \rfloor}\bigr) \in L(0,\epsilon).
$$
\item[(ii)] For any $\epsilon>0$ there is an integer
$N\in {\mathbb N}$ such that for any finite set $F\subseteq [N,\infty) \cap {\mathbb N}$ and for any $k\in {\mathbb N}$ we have
$$
\sup_{n\in F,x\in K}\tilde{\rho} \bigl(x-t_{\lfloor m_{k+n} \rfloor}+ t_{\lfloor m_{k} \rfloor}\bigr) \in L(0,\epsilon).
$$
\item[(iii)] For any $\epsilon>0$ there is an integer
$N\in {\mathbb N}$ such that for any finite set $F\subseteq [N,\infty) \cap {\mathbb N}$ we have
$$
\sup_{n\in F,x\in K}\rho \bigl(x+t_{\lfloor m_{n} \rfloor}\bigr) \in L(0,\epsilon).
$$
\end{itemize}
Then $(T(t))_{t\in \Delta'}$ is $(g\circ f)$-frequently hypercyclic and the sequence of operators
$(T_{n}:=T(t_{n}))_{n\in {\mathbb N}}$ is l-$(m_{k})$-frequently hypercyclic.
\end{theorem}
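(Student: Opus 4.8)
The plan is to deduce Theorem~\ref{dovs-niz} from Theorem~\ref{oma-niz} in exactly the way Theorem~\ref{dovs} was deduced from Theorem~\ref{oma}, the only change being that the single ray through $t_{0}$ is now replaced by the prescribed sequence $(t_{n})_{n\in{\mathbb N}}$. As in the preceding proofs we may assume $\Delta'=\Delta$. Take $E_{0}$ to be the dense subspace of $E=C_{0,\rho}(\Delta,{\mathbb K})$ consisting of the continuous compactly supported functions, and for $j\in{\mathbb N}$ define $S_{j} : E_{0}\rightarrow E$ by $(S_{j}f)(x):=f(x-t_{j})$ for $x\in t_{j}+supp(f)$ and $(S_{j}f)(x):=0$ otherwise, exactly as the $S_{\lfloor m_{n}\rfloor}$ were defined in the proof of Theorem~\ref{dovs}. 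Then $S_{j}f\in E_{0}\subseteq E$ and $T(t_{j})S_{j}f=f$ for every $f\in E_{0}$; in particular the sequence occurring in condition (iv) of Theorem~\ref{oma-niz} is constantly equal to $y$, so (iv) holds trivially, and condition (v) of Theorem~\ref{oma-niz} is precisely what is hypothesised here, where the constant called $\delta$ in Theorem~\ref{oma-niz} is taken to be $\delta'$ and the increasing mapping there is $g$.

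It then remains to verify conditions (i)--(iii) of Theorem~\ref{oma-niz} for each $f\in E_{0}$; fix such an $f$ and put $K:=supp(f)$. One computes the $C_{0,\rho}$-norm of a partial sum such as $\sum_{1\le n\le k,\,n\in F}T(t_{\lfloor m_{k}\rfloor})S_{\lfloor m_{k-n}\rfloor}f$ ($F\subseteq{\mathbb N}$ finite) as a supremum over $x\in\Delta$, applies the translation change of variables that carries the argument of $f$ back into $K$, and --- using that at each point only boundedly many summands are nonzero --- bounds it by a bounded multiple of $\|f\|_{\infty}\sup_{n\in F,\,k\ge n,\,x\in K}\tilde{\rho}\bigl(x-t_{\lfloor m_{k}\rfloor}+t_{\lfloor m_{k-n}\rfloor}\bigr)$; the two tail series $\sum_{n\ge1}T(t_{\lfloor m_{k}\rfloor})S_{\lfloor m_{k+n}\rfloor}f$ and $\sum_{n\ge1}S_{\lfloor m_{n}\rfloor}f$ are estimated in the same manner and are dominated by bounded multiples of $\|f\|_{\infty}$ times the suprema of $\tilde{\rho}\bigl(x-t_{\lfloor m_{k+n}\rfloor}+t_{\lfloor m_{k}\rfloor}\bigr)$ and $\rho\bigl(x+t_{\lfloor m_{n}\rfloor}\bigr)$, respectively. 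This is the computation carried out verbatim in the proof of Theorem~\ref{dovs}. Hypotheses (i)--(iii) of Theorem~\ref{dovs-niz} assert exactly that these suprema lie in $L(0,\epsilon)$ once $\min F$ is large, uniformly in $k$, which is the required unconditional, uniform convergence. An application of Theorem~\ref{oma-niz} then yields that $(T_{n}=T(t_{n}))_{n\in{\mathbb N}}$ is l-$(m_{k})$-frequently hypercyclic and that $(T(t))_{t\in\Delta}$ is $(g\circ f)$-frequently hypercyclic, as asserted.

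The one point requiring a separate argument, and the sole place where \eqref{modric} enters, is the bounded-overlap bookkeeping invoked above: one must know that at each $x\in\Delta$ only a number of summands bounded independently of $k$ and $x$ is nonzero, and that any $x$ lies in at most $n_{K}$ of the translates $t_{\lfloor m_{n}\rfloor}+K$. The first is handled by the finiteness hypothesis \eqref{lo-le} on the indices together with the $\delta'$-separation \eqref{krke-rke}, which by a packing argument in ${\mathbb C}$ bounds the number of distinct $t_{j}$ inside any fixed compact set; this is exactly how the constants $c_{k,\delta}$ were used in Theorems~\ref{dov} and \ref{dovs}. The second follows because \eqref{modric} forces $\lfloor m_{n}\rfloor$ to be strictly increasing, hence to tend to $+\infty$, for all large $n$ (otherwise $t_{\lfloor m_{n+1}\rfloor}-t_{\lfloor m_{n}\rfloor}$ would vanish infinitely often), so that $\{\,n : x\in t_{\lfloor m_{n}\rfloor}+K\,\}$ embeds, apart from a bounded initial segment, into the finite set $\{\,j : t_{j}\in x-K\,\}$. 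I expect this bounded-overlap bookkeeping to be the main --- though still essentially routine --- obstacle; the rest is a transcription of the proofs of Theorems~\ref{dovs} and \ref{dov-niz}, which may be omitted.
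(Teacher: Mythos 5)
Your proposal is correct and follows exactly the route the paper intends: the paper gives no separate proof of Theorem~\ref{dovs-niz}, stating only that it follows "similarly, continuing the analysis from Theorem~\ref{oma-niz}" with \eqref{modric} needed for the bounded-overlap count $n_{K}$, and your reduction to Theorem~\ref{oma-niz} via the maps $(S_{j}f)(x)=f(x-t_{j})$, the sup-norm estimates patterned on Theorem~\ref{dovs}, and the bookkeeping via \eqref{lo-le}, the $\delta'$-separation \eqref{krke-rke} and \eqref{modric} is precisely that argument. The only refinement worth noting is that, unlike in Theorems~\ref{dov}--\ref{dovs} where \eqref{lo-le} alone controls the overlaps, here one genuinely needs the packing bound from \eqref{krke-rke} combined with \eqref{lo-le} (to control repeated values of $\lfloor m_{k\pm n}\rfloor$), which you correctly identify.
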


Keeping in mind \cite[Lemma 4.2]{kuckanje-prim}, it is almost straightforward to extend the assertions of \cite[Proposition 3.6, Proposition 3.8]{man-peris} for $f$-frequently hypercyclic $C_{0}$-semigroups on complex sectors which do have at least one single  l-$(m_{n})$-hypercyclic operator (the situation is not so clear for  $C_{0}$-semigroups defined on complex sector $\Delta \neq [0,\infty)$ without any single  l-$(m_{n})$-hypercyclic operator):

\begin{proposition}\label{backspace}
\begin{itemize}
\item[(i)] Assume that $1\leq p<\infty,$ $E:=
L^{p}_{\rho}(\Delta, {\mathbb K}),$ $t_{0}\in \Delta' \setminus \{0\},$ $f : [0,\infty) \rightarrow [1,\infty)$ is an increasing mapping and $m_{k}=f(k),$ $k\in {\mathbb N}_{0}.$ Let the operator $T(t_{0})$ be l-$(m_{n})$-hypercyclic. Then for each $\epsilon>0$ there exists a strictly increasing sequence $(n_{k})_{k\in {\mathbb N}}$ in ${\mathbb N}$ such that the set $\{n_{k} : k\in {\mathbb N}\}$ has a positive
lower $(m_{n})$-density and that, for every $i\in {\mathbb N},$
$$
\sum_{k>i}\rho \Bigl( \bigl(n_{k}-n_{i}\bigr)e^{i\arg(t_{0})}\Bigr)<\epsilon.
$$
In particular, if $f(t)=t+1$ for all $t\geq 0,$ then the mapping $t\mapsto \rho(te^{i\arg(t_{0})}),$ $t\geq 0$ is bounded.
\item[(ii)] Assume that $E:=
C_{0,\rho}(\Delta, {\mathbb K}),$ $t_{0}\in \Delta' \setminus \{0\},$ $f : [0,\infty) \rightarrow [1,\infty)$ is an increasing mapping and $m_{k}=f(k),$ $k\in {\mathbb N}_{0}.$ Let the operator $T(t_{0})$ be l-$(m_{n})$-hypercyclic. Then for each $\epsilon>0$ there exists a strictly increasing sequence $(n_{k})_{k\in {\mathbb N}}$ in ${\mathbb N}$ such that the set $\{n_{k} : k\in {\mathbb N}\}$ has a positive
lower $(m_{n})$-density and that, for every $i\in {\mathbb N},$ we have
$\rho ( (n_{k}-n_{i})e^{i\arg(t_{0})})<\epsilon.
$ In particular, if $f(t)=t+1$ for all $t\geq 0,$ then the mapping $t\mapsto \rho(te^{i\arg(t_{0})}),$ $t\geq 0$ is bounded.
\end{itemize}
\end{proposition}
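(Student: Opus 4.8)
The plan is to prove both parts by the same argument, carrying out (i) in detail and indicating the modification for (ii); in each case the sequence $(n_{k})$ will be essentially the set of ``good return times'' of an l-$(m_{n})$-hypercyclic vector, read along the ray $\{re^{i\arg(t_{0})}:r\geq 0\}$. First I would invoke \cite[Lemma 4.2]{kuckanje-prim} to pass from the single operator $T(t_{0})$ on $L^{p}_{\rho}(\Delta,\mathbb K)$ (resp. $C_{0,\rho}(\Delta,\mathbb K)$) to the situation along that ray, which in particular normalises $|t_{0}|=1$, so that $(n_{k}-n_{i})t_{0}=(n_{k}-n_{i})e^{i\arg(t_{0})}$. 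Fix $\epsilon>0$, fix a radius $r_{0}\in(0,|t_{0}|)$ and an amplitude $\lambda>0$, and choose a continuous $g_{0}\in E$ with $0\le g_{0}\le \lambda$, $\operatorname{supp}g_{0}\subseteq \overline{B(0,r_{0})}\cap\Delta$ and $g_{0}\equiv\lambda$ on $B(0,r_{0}/2)\cap\Delta$. If $x$ is an l-$(m_{n})$-hypercyclic vector of $T(t_{0})$, then the open set $V_{\epsilon_{0}}:=\{u\in E:\|u-g_{0}\|<\epsilon_{0}\}$ meets the orbit along a set $A=A_{\epsilon_{0}}:=\{n\in\mathbb N:T(nt_{0})x\in V_{\epsilon_{0}}\}$ of positive lower $(m_{n})$-density; enumerate $A=\{n_{1}<n_{2}<\cdots\}$. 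This $(n_{k})$ is the candidate sequence, and the parameter $\epsilon_{0}=\epsilon_{0}(\epsilon)$ is to be taken small at the end.

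The core is a disjointification estimate. Fix $i<k$ in $A$ and set $y:=T(n_{i}t_{0})x$; then $\|y-g_{0}\|<\epsilon_{0}$ and, by the semigroup law, $\|T((n_{k}-n_{i})t_{0})y-g_{0}\|=\|T(n_{k}t_{0})x-g_{0}\|<\epsilon_{0}$. Since $g_{0}$ is supported in $\overline{B(0,r_{0})}$, the first inequality shows that $y$ is $\epsilon_{0}$-small in the norm of $E$ over $\Delta\setminus\overline{B(0,r_{0})}$; since $T((n_{k}-n_{i})t_{0})y$ is $\epsilon_{0}$-close to $g_{0}$, the modulus $|y|$ is bounded below by a fixed constant of size $\asymp\lambda$ (once $\epsilon_{0}$ is small) on the translated neighbourhood $U_{k}:=(n_{k}-n_{i})t_{0}+(B(0,r_{0}/2)\cap\Delta)$. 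The sets $U_{k}$ ($k>i$) are pairwise disjoint and contained in $\Delta\setminus\overline{B(0,r_{0})}$ (their centres lie at distance $\ge|t_{0}|$ from the origin and from one another, while $r_{0}<|t_{0}|$), and the admissibility inequality $\rho(s)\le Me^{\omega|u|}\rho(s+u)$ gives $\rho\ge \rho((n_{k}-n_{i})t_{0})/(Me^{\omega r_{0}/2})$ on $U_{k}$. In the $L^{p}$ case this yields $\int_{U_{k}}|y|^{p}\rho\ge c\,\rho((n_{k}-n_{i})t_{0})$ with $c=c(\lambda,r_{0},p,M,\omega)>0$; summing over the disjoint $U_{k}\subseteq\Delta\setminus\overline{B(0,r_{0})}$ and using $\int_{\Delta\setminus\overline{B(0,r_{0})}}|y|^{p}\rho=\int_{\Delta\setminus\overline{B(0,r_{0})}}|y-g_{0}|^{p}\rho<\epsilon_{0}^{p}$ gives $\sum_{k>i}\rho((n_{k}-n_{i})e^{i\arg(t_{0})})\le c^{-1}\epsilon_{0}^{p}<\epsilon$ for $\epsilon_{0}$ small. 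In the $C_{0}$ case one simply evaluates at the centre $w=(n_{k}-n_{i})t_{0}$, where $|y|\,\rho<\epsilon_{0}$ (off-support bound) while $|y|\ge\lambda/2$ (near-$g_{0}$ bound), so $\rho((n_{k}-n_{i})e^{i\arg(t_{0})})<2\epsilon_{0}/\lambda<\epsilon$. The difference between the parts is exactly that in $L^{p}$ the masses of disjoint pieces add under $\|\cdot\|_{p}^{p}$, giving a bound on the \emph{sum}, whereas in $C_{0}$ only the single worst piece is controlled, giving a bound on \emph{each} term.

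For the ``in particular'' clauses take $f(t)=t+1$, so lower $(m_{n})$-density is ordinary lower density, and apply the above with $\epsilon=1$: this produces a set $\{n_{k}\}$ of positive lower density with $\rho((n_{k}-n_{i})e^{i\arg(t_{0})})<1$ for all $i<k$ (in (i) a single term of the convergent sum, in (ii) the displayed inequality). The difference set of a set of positive lower density is syndetic, say with gap bound $G$, so for every $s\ge 0$ there are $i<k$ with $s\le n_{k}-n_{i}\le s+G$; writing $\sigma:=n_{k}-n_{i}$ and using admissibility in the form $\rho(se^{i\arg(t_{0})})\le Me^{\omega(\sigma-s)|t_{0}|}\rho(\sigma e^{i\arg(t_{0})})$ (legitimate since $(\sigma-s)t_{0}\in\Delta$) together with $\rho(\sigma e^{i\arg(t_{0})})<1$, one obtains $\rho(se^{i\arg(t_{0})})\le Me^{\omega G|t_{0}|}$ for all $s$, i.e. $t\mapsto\rho(te^{i\arg(t_{0})})$ is bounded.

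The main obstacle I anticipate is getting the strict bound ``$<\epsilon$'' rather than merely ``$\le$ constant'': a naive disjointification against $g_{0}$ bounds $\sum_{k>i}\rho((n_{k}-n_{i})t_{0})$ only by a multiple of $\|y\|^{p}\approx\|g_{0}\|^{p}$, which does not shrink, and choosing $g_{0}$ of small norm does not help because admissibility forces $\rho$ to be bounded below near the origin. The resolution — the one genuinely delicate point — is to notice that the small quantity is not the full norm of $y=T(n_{i}t_{0})x$ but its norm over $\Delta\setminus\operatorname{supp}g_{0}$, which is already $<\epsilon_{0}$ and which contains all the translated neighbourhoods $U_{k}$; once this is recognised, the $\epsilon$ is for free. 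Two further points needing care are that admissibility propagates $\rho$ only in the ``forward'' direction along $\Delta$ — precisely the direction used both for the lower bound on $\rho$ over the $U_{k}$ and for the syndetic-window step — and the bookkeeping of the normalisation $|t_{0}|=1$ supplied by \cite[Lemma 4.2]{kuckanje-prim}.
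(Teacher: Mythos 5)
Your argument is correct and is essentially the paper's intended proof: the paper supplies no details for Proposition~\ref{backspace}, referring only to \cite[Propositions 3.6 and 3.8]{man-peris} together with \cite[Lemma 4.2]{kuckanje-prim}, and your scheme --- returns of an l-$(m_n)$-hypercyclic vector to a small ball around a bump $g_{0}$ supported near the origin, followed by disjointification of the translated neighbourhoods $U_{k}$ inside the region where $T(n_{i}t_{0})x$ has $E$-mass less than $\epsilon_{0}$ --- is exactly the Mangino--Peris argument transplanted to the sector, including the $L^{p}$-versus-$C_{0,\rho}$ dichotomy (summable masses versus a bound on each term) and the syndeticity step for the boundedness of $t\mapsto\rho(te^{i\arg(t_{0})})$. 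Three small points deserve tidying but are not gaps: take $r_{0}<2|t_{0}|/3$ so that the $U_{k}$ genuinely avoid $\overline{B(0,r_{0})}$; the pointwise claim that $|y|\gtrsim\lambda$ \emph{on all of} $U_{k}$ is not literally what the $\epsilon_{0}$-closeness gives (only a lower bound on the $L^{p}(\rho)$-mass of $y$ over $U_{k}$, up to a small exceptional set), though the inequality $\int_{U_{k}}|y|^{p}\rho\geq c\,\rho((n_{k}-n_{i})t_{0})$ that you actually use is correct; and the discrepancy between $\rho((n_{k}-n_{i})t_{0})$ and $\rho((n_{k}-n_{i})e^{i\arg(t_{0})})$ when $|t_{0}|\neq 1$ is an imprecision already present in the statement of the proposition, which your appeal to a normalisation via \cite[Lemma 4.2]{kuckanje-prim} treats no less carefully than the paper itself does.
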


\subsection{Examples II}
\label{pogreska}
\begin{itemize}

\item[(i)] Let $\Delta =[0,\infty)$ or $\Delta={\mathbb R},$ and let $\rho(x):=(1+|x|)^{-1},$ $x\in \Delta.$ Then $\rho(\cdot)$ is an
admissible weight function and, due to \cite[Theorem 3.8, Theorem 3.9]{man-peris}, the translation semigroup $(T(t))_{t\geq 0}$ is not frequently hypercyclic on $
L^{p}_{\rho}(\Delta, {\mathbb K}),$ for any $p\geq 1.$ Suppose now that $q>1,$ $1\leq p <\infty$ and $f(t):=t^{q}+1,$ $t\geq 0.$ Then we can apply Theorem \ref{dov} in order to see that $(T(t))_{t\geq 0}$ is $q$-frequently hypercyclic on $
L^{p}_{\rho}(\Delta, {\mathbb K}).$ Further on, due to \cite[Proposition 2.7]{man-peris}, any single operator $T(t_{0})$ of a $C_{0}$-semigroup $(T(t))_{t\geq 0}$ satisfying the requirements of Frequent Hypercyclic Criterion for $C_{0}$-semigroups needs to be chaotic ($t_{0}>0$). Here, we would like to point out that, due to \cite[Corollary 4.8]{hasan-emamirad}, the translation $q$-frequently hypercyclic semigroup $(T(t))_{t\geq 0}$ constructed in this way, with $\Delta =[0,\infty)$, is not chaotic and that, for every $t_{0}>0,$ the single operator $T(t_{0})$ is not chaotic.
\item[(ii)] (\cite[Example
1]{takeo}; see also \cite[Example 3.7]{man-peris} and \cite[Example 3.1.28(ii)]{knjigaho}) Let $p\in [1,\infty),$ $\Delta =\Delta':=[0,\infty)$ and $\rho(x):=e^{-(x+1)\cos( \ln (x+1))+1},$ $x\geq 0.$ Assume first that $E:=L^{p}_{\rho}([0,\infty ),{\mathbb K}).$ Then the translation semigroup $(T(t))_{t\geq 0}$ is hypercyclic because $\liminf_{x\rightarrow \infty}\rho(x)=0,$ and not chaotic because $\int^{\infty}_{0}\rho(x)\, dx=+\infty;$ by \cite[Theorem 3]{man-marina}, it follows that $(T(t))_{t\geq 0}$ is not frequently hypercyclic. Let $\zeta \sim \pi/2+.$ We will prove that $(T(t))_{t\geq 0}$ is $f$-frequently hypercyclic with $f(x):=e^{(2x+1)\pi +\zeta},$ $x\geq 0.$ By Remark \ref{hnme-f}, we have that for given numbers
$k\in {\mathbb N}$ and $\delta >0,$ there exist only finite number $c_{k,\delta}\in {\mathbb N}$ of tuples $(n_{1},n_{2})\in {\mathbb N}_{0}^{2}$
such that $n_{1}\neq n_{2},$ \eqref{lo-le} holds
as well as that the sequence $(c_{k,\delta})_{k\in {\mathbb N}}$ is bounded for every fixed number $\delta>0.$ Let $t_{0}=1$ and let $K=[a,b]\subseteq [0,\infty)$ be a compact set. First of all, it can be easily seen that the series $\sum_{n=1}^{k}\int_{K}\tilde{\rho}(x-(\lfloor m_{k} \rfloor -\lfloor m_{k-n} \rfloor))\, dx$ and $\sum_{n=1}^{\infty}\int_{ K}\tilde{\rho}(x-(\lfloor m_{k+n} \rfloor -\lfloor m_{k} \rfloor))\, dx$ converge unconditionally, uniformly in $k\in {\mathbb N},$ because the sums are finite and equal zero for $k\geq k_{0},$ where $k_{0}\in {\mathbb N}$ depends only on $K.$ The series $\sum_{n=1}^{\infty}\int_{a}^{b}\rho(x+\lfloor m_{n} \rfloor)\, dx$ converges absolutely
because there exists an integer $n_{0}(K) \in {\mathbb N}$ such that $[\ln(a+1+\lfloor e^{\zeta+(2n+1)\pi} \rfloor) ,\ln(b+1+\lfloor e^{\zeta+(2n+1)\pi} \rfloor ) ]\subseteq  [\zeta +(2n+1) \pi,\pi-\zeta+(2n+3)\pi]$ for all $n\in {\mathbb N},$ which clearly implies that there exists a finite constant $c_{\zeta ,K}>0$ such that
\begin{align*}
\sum_{n=1}^{\infty}\int_{a}^{b}& \rho(x+\lfloor m_{n} \rfloor)\, dx
=\sum_{n=1}^{\infty}\int_{a+\lfloor m_{n} \rfloor}^{b+\lfloor m_{n} \rfloor} \rho(x)\, dx
\\ & = \sum_{n=1}^{\infty}\int_{a+\lfloor e^{\zeta +(2n+1)\pi  }\rfloor}^{b+\lfloor e^{\zeta +(2n+1)\pi  }\rfloor} \rho(x)\, dx \leq c_{\zeta ,K}\sum_{n=1}^{\infty}e^{-(2n+1) \pi}.
\end{align*}
Hence, an application of Theorem \ref{dov} shows that $(T(t))_{t\geq 0}$ is $f$-frequently hypercyclic, as claimed. Suppose now that
$E:=C_{0,\rho}([0,\infty ) ,{\mathbb K}).$ Then $(T(t))_{t\geq 0}$ is hypercyclic, not chaotic and moreover, $(T(t))_{t\geq 0}$ cannot be frequently hypercyclic by \cite[Proposition 3.8]{man-peris}. Arguing as above, we may conclude that $(T(t))_{t\geq 0}$ is $f$-frequently hypercyclic with the same choice of function $f(\cdot).$ Let us finally note that it is not clear how we can apply Proposition \ref{backspace} here in order to see that $(T(t))_{t\geq 0}$ is not $g$-frequently hypercyclic if $g(\cdot)$ is of subexponential growth.
\end{itemize}

\subsection{On frequently hypercyclic translation semigroups on complex sectors}

Our main aim here is to state the following slight extension of \cite[Theorem 6]{kuckanje}:

\begin{theorem}\label{nasser}
Assume that $1\leq p<\infty,$ $\Delta=\Delta(\alpha)$ for some $\alpha \in (0,\pi/2]$ or $\Delta={\mathbb C}$
and
$E=
L^{p}_{\rho}(\Delta, {\mathbb K}).$ Then the following statements are equivalent:
\begin{itemize}
\item[(i)] The translation semigroup $(T(t))_{t\in \Delta}$ is chaotic.
\item[(ii)] There exists a ray $R\subseteq \Delta$ starting at zero such that, for every $m\in {\mathbb N},$ we have
$$
\int_{F_{R,m}}\rho(s) \, ds <\infty, \mbox{ where }F_{R,m}:=\Bigl\{t\in \Delta : d(t,R)=\inf_{s\in R}|t-s|\leq m \Bigr\}.
$$
\item[(iii)] There exists a ray $R\subseteq \Delta$ starting at zero such that the translation semigroup $(T(t))_{t\in R}$ is frequently hypercyclic on $E.$
\end{itemize}
Any of these statements implies that $(T(t))_{t\in \Delta}$ is frequently hypercyclic on $E.$
\end{theorem}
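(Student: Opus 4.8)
Since $E=L^{p}_{\rho}(\Delta,{\mathbb K})$ is a Banach space, the equivalence (i)$\Leftrightarrow$(ii) is, up to a routine reformulation, the content of \cite[Theorem 6]{kuckanje}; I would take it for granted and then establish the cycle (ii)$\Rightarrow$(iii)$\Rightarrow$(ii), after which the final assertion follows from (iii).

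\emph{(ii)$\Rightarrow$(iii).} I would fix $t_{0}\in R\setminus\{0\},$ put $\theta:=\arg(t_{0}),$ and apply the Frequent Hypercyclicity Criterion for $C_{0}$-semigroups \cite[Theorem 2.2]{man-peris} to the one-parameter semigroup $(T(te^{i\theta}))_{t\geq 0}.$ As dense set I would take $E_{0}:=$ the continuous compactly supported functions on $\Delta,$ and I would let $S(t)f$ be the backward translation $x\mapsto f(x-te^{i\theta})$ on $te^{i\theta}+\mathrm{supp}(f),$ extended by zero; then $T(te^{i\theta})E_{0}\subseteq E_{0},$ $S(t):E_{0}\to E_{0},$ $T(te^{i\theta})S(t)=I$ on $E_{0},$ and $S(r)T(te^{i\theta})=S(r-t)$ on $E_{0}$ for $r>t>0.$ The two integrability requirements of the Criterion I would check as follows: for $f\in E_{0}$ the support of $T(te^{i\theta})f$ leaves $\Delta$ once $t$ is large, so $t\mapsto T(te^{i\theta})f$ is eventually zero and Bochner integrable on $[0,\infty);$ and for $t\mapsto S(t)f,$ with $K:=\mathrm{supp}(f),$ the bound $\|S(t)f\|_{p}\leq\|f\|_{\infty}\bigl(\int_{K}\rho(y+te^{i\theta})\,dy\bigr)^{1/p}$ together with the inclusion $K+te^{i\theta}\subseteq F_{R,m},$ $m:=\sup_{y\in K}|y|$ (since $d(y+te^{i\theta},R)\leq|y|$), and Tonelli's theorem reduce the matter to $\int_{F_{R,m'}}\rho<\infty$ for a suitable $m',$ which is (ii); for $p=1$ this gives Bochner integrability of $t\mapsto S(t)f,$ and for $1<p<\infty$ I would instead test against $\psi\in L^{p'}_{\rho}(\Delta,{\mathbb K})=E^{*},$ bound $\int_{0}^{\infty}|\langle\psi,S(t)f\rangle|\,dt\leq C_{f}\|\psi\|_{p'}$ by two applications of H\"older's inequality, and invoke reflexivity of $E$ to conclude Pettis integrability. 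The Criterion then yields that $(T(t))_{t\in R}$ is frequently hypercyclic.

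\emph{(iii)$\Rightarrow$(ii).} In coordinates making $R$ the ray of polar angle $\theta,$ the set $F_{R,m}$ is the strip of points of $\Delta$ whose component orthogonal to $R$ has modulus $\leq m,$ and this strip is invariant under translation along $R;$ hence the multiplication operator $\Pi_{m}f:=f\chi_{F_{R,m}}$ is a norm-one projection on $E$ commuting with every $T(t),$ $t\in R,$ and $E=L^{p}_{\rho}(F_{R,m},{\mathbb K})\oplus L^{p}_{\rho}(\Delta\setminus F_{R,m},{\mathbb K})$ is a $(T(t))_{t\in R}$-invariant direct-sum decomposition. I would then observe that, for every $m>0,$ a frequently hypercyclic vector $g$ of $(T(t))_{t\in R}$ is carried by $\Pi_{m}$ to a frequently hypercyclic vector of the translation semigroup on $L^{p}_{\rho}(F_{R,m},{\mathbb K}),$ and invoke the necessity part for frequent hypercyclicity of translation semigroups on weighted $L^{p}$-spaces (the tube analogue of \cite[Theorem 3]{man-marina} and of \cite[Theorem 3.8, Theorem 3.9]{man-peris}) to force $\int_{F_{R,m}}\rho(s)\,ds<\infty$ for each $m,$ i.e. (ii); with (i)$\Leftrightarrow$(ii) the cycle closes. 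The hard part will be exactly this necessity step — transcribing the known one-dimensional argument to the half-strip $F_{R,m},$ whose base is two-dimensional — and if a direct reference is not available I would reprove it by a conservation-of-mass estimate along the orbit of a frequently hypercyclic vector, combined with the positivity of the lower density of its return sets.

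\emph{Final assertion.} Granting (iii), \cite[Proposition 2.1]{man-peris} (the case $f(t)=t+1$ of Theorem \ref{f-freq-21.}) makes the single operator $T(t_{0})$ frequently hypercyclic, hence also the sequence $(T(nt_{0}))_{n\in{\mathbb N}}=(T(t_{0})^{n})_{n\in{\mathbb N}};$ since $|nt_{0}-mt_{0}|\geq|t_{0}|$ for $n\neq m,$ Proposition \ref{manic}(iii) applies to this sequence and yields that $(T(t))_{t\in\Delta}$ is frequently hypercyclic. By the equivalences established above, the same conclusion follows from (i) and from (ii).
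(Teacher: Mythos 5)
Your treatment of (i)$\Leftrightarrow$(ii), of (ii)$\Rightarrow$(iii), and of the final assertion is essentially the paper's: the equivalence of (i) and (ii) is quoted from \cite[Theorem 6]{kuckanje}; (ii)$\Rightarrow$(iii) is obtained from the Frequent Hypercyclicity Criterion \cite[Theorem 2.2]{man-peris} with the backward translations $S(t)$ on compactly supported continuous functions, with Bochner integrability for $p=1$ and a duality/Pettis argument for $p>1$ (your Tonelli bound over the tube $K+R\subseteq F_{R,m}$ is a clean substitute for the paper's estimate $\int_{0}^{\infty}\rho(te^{i\varphi})\,dt\leq \mbox{Const.}\int_{F_{R,m}}\rho(s)\,ds$ obtained from \cite[Lemma 4.2]{fund} and the last estimate in the proof of \cite[Theorem 6]{kuckanje}); and the last assertion follows from \cite[Proposition 2.1]{man-peris} together with Proposition \ref{manic}, exactly as you say.

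The genuine gap is in your (iii)$\Rightarrow$(ii). The ``necessity step'' you defer --- that frequent hypercyclicity of the translation semigroup on the tube $F_{R,m}$ forces $\int_{F_{R,m}}\rho<\infty$ --- is not a routine transcription of the one-dimensional case, and it cannot be recovered by a conservation-of-mass estimate combined with positivity of lower densities of return sets. Such an estimate only yields summability of $\rho$ along a set of translation times of positive lower density (or along its difference set); upgrading this to summability over \emph{all} of $[0,\infty)$ is precisely the content of the Bayart--Ruzsa theorem on frequently hypercyclic weighted backward shifts, a genuinely hard combinatorial result about difference sets of positive-density subsets of ${\mathbb N}$ --- and the fact that the analogous statement fails for shifts on $c_{0}(v)$ shows that no soft counting argument can succeed. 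The paper closes the cycle by proving (iii)$\Rightarrow$(i) through exactly this discretization: arguing as in \cite[Theorem 3.8]{man-marina} (with \cite[Lemma 4.2]{kuckanje-prim} and the regions $ke^{i\varphi}+\Delta_{1}$ replacing the segments $[k,k+1]$), statement (iii) forces the backward shift on $\ell^{p}_{v}$ with $v_{k}=\rho(ke^{i\varphi})$ to be frequently hypercyclic; then \cite[Theorem 12.3]{bayart-ruse} gives $\sum_{k}\rho(ke^{i\varphi})<\infty$, and \cite[Theorem 4]{kuckanje} yields (i). You should route your argument through this reduction rather than attempt a direct mass-counting proof on the tube. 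A secondary, repairable slip: $\Pi_{m}$ does not commute with $T(t)$ on $E$ (for $\Delta={\mathbb C}$ and $R=[0,\infty)$ a point far to the left of the origin lies outside $F_{R,m}$ while a suitable translate of it lies on $R$); only the one-sided intertwining $\Pi_{m}\circ T(t)=\tilde{T}(t)\circ \Pi_{m}$ into $L^{p}_{\rho}(F_{R,m},{\mathbb K})$ holds, which is in any case all that Lemma \ref{conjugacy} requires.
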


\begin{proof}
The equivalence of (i) and (ii) has been proved in \cite[Theorem 6]{kuckanje}. Assume that (ii) holds with the ray $R:=\{te^{i\varphi} : t\geq 0 \}\subseteq \Delta$ and some angle $\varphi \in [0,2\pi).$ The implication (ii) $\Rightarrow $ (iii) follows by applying the Frequent Hypercyclic Criterion for $C_{0}$-semigroups with the mapping $S(t) : E_{0} \rightarrow E$ defined by $(S(t)f)(x):=f(x-te^{i\varphi}),$ $x\in te^{i\varphi}+supp(f)$ and $(S(t)f)(x):=0,$ otherwise ($f\in E_{0},$ $t>0$); see the proof of \cite[Proposition 3.3]{man-peris}, with appealing to \cite[Lemma 4.2]{kuckanje-prim} in place of \cite[Lemma 4.2]{fund}. Here it is only worth noting that, in the case $p=1,$ the Bochner integrability of mapping $t\mapsto S(te^{i\varphi})f,$ $t\geq 0$ ($f\in E_{0}$) follows essentially from the corresponding part of the proof of \cite[Proposition 3.3]{man-peris} and the fact that $\int^{\infty}_{0}\rho(te^{i\varphi})\, dt$ is convergent, which follows from the next calculation involving \cite[Lemma 4.2]{fund} and the last estimate from the proof of \cite[Theorem 6]{kuckanje}:
\begin{align*}
\int^{\infty}_{0}& \rho\bigl(te^{i\varphi} \bigr)\, dt =\sum_{k=0}^{\infty}\int^{k+1}_{k}\rho\bigl(te^{i\varphi} \bigr)\, dt
\\ & \leq \mbox{Const.} \sum_{k=0}^{\infty} \rho \bigl(kte^{i\varphi}\bigr)\leq \mbox{Const.} \int_{F_{R,m}}\rho(s) \, ds;
\end{align*}
the Pettis integrability of mapping $t\mapsto S(te^{i\varphi})f,$ $t\geq 0$ ($f\in E_{0}$) for $p>1$ follows much easier, by taking functionals. Arguing as in the proof of \cite[Theorem 3.8]{man-marina}, it can be simply shown that the validity of (iii) in the case $\Delta={\mathbb C}$ implies that the backward shift operator $B$ is frequently hypercyclic on the Banach space $l_{p}^{v}:=\{(x_{k})_{k\in {\mathbb Z}} \, ; \, \|(x_{k})_{k\in {\mathbb Z}}\|:=(\sum_{k\in {\mathbb Z}}|x_{k}|^{p}v_{k})^{1/p} < \infty\},$ where $v_{k}:=\rho(ke^{i\varphi})$ for $k\in {\mathbb Z}$ (just use \cite[Lemma 4.2]{kuckanje-prim} and replace the segment $[k,k+1]$ in the proof with the region $ke^{i\varphi}+\Delta_{1}$). By \cite[Theorem 12.3]{bayart-ruse}, we get that $\sum_{k=0}^{\infty}\rho(ke^{i\varphi}) <\infty$ and (i) follows by applying \cite[Theorem 4]{kuckanje}. The proof of implication (iii) $\Rightarrow$ (i), in the case that $\Delta=\Delta(\alpha)$ for some $\alpha \in (0,\pi/2],$ is similar and therefore omitted.
\end{proof}

A simple modification of the proof of \cite[Proposition 3.4]{man-marina} yields the following:

\begin{proposition}\label{qwea}
Assume that $E:=
C_{0,\rho}(\Delta, {\mathbb K})$ and $\lim_{t\in \Delta',|t|\rightarrow \infty}\rho(t)=0.$ Then $(T(t))_{t\in \Delta'}$ is frequently hypercyclic and any single operator $T(t_{0})$ is frequently hypercyclic ($t_{0}\in \Delta'\setminus \{0\}$).
\end{proposition}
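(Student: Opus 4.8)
The plan is to apply Theorem \ref{oma} to the $C_{0}$-semigroup $(T(t))_{t\in \Delta'}$ on $E$, the sector in Theorem \ref{oma} now being $\Delta'$, and with $f(t):=t+1$; then $m_{k}=k+1$, $\lfloor m_{k}\rfloor=k+1$, so that $f$-frequent hypercyclicity is precisely frequent hypercyclicity, and l-$(m_{k})$-frequent hypercyclicity of a single operator is again ordinary frequent hypercyclicity (since $m_{k}/k\to 1$). Fix $t_{0}\in \Delta'\setminus\{0\}$, let $E_{0}:=C_{c}(\Delta,{\mathbb K})$, which is dense in $E=C_{0,\rho}(\Delta,{\mathbb K})$, and for $m\in{\mathbb N}$ define the ``right translation'' $S_{m}f:\Delta\rightarrow{\mathbb K}$ by $(S_{m}f)(x):=f(x-mt_{0})$ for $x\in mt_{0}+\mathrm{supp}(f)$ and $(S_{m}f)(x):=0$ otherwise. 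Since $T(mt_{0})S_{m}f=f$ for every $f\in E_{0}$, condition (iv) of Theorem \ref{oma} holds with equality, and it remains only to verify (i)--(iii).

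For $f\in E_{0}$ with $K:=\mathrm{supp}(f)$, carrying out the substitution $x\mapsto x\pm mt_{0}$ inside the supremum defining the norm of $C_{0,\rho}(\Delta,{\mathbb K})$ gives $\|T(t_{0}(k+1))S_{k-n+1}f\|\leq\|f\|_{\infty}\sup_{y\in K}\tilde{\rho}(y-nt_{0})$, $\|T(t_{0}(k+1))S_{k+n+1}f\|\leq\|f\|_{\infty}\sup_{y\in K}\rho(y+nt_{0})$ and $\|S_{n+1}f\|\leq\|f\|_{\infty}\sup_{y\in K}\rho(y+(n+1)t_{0})$. The characteristically $C_{0}$-type point is that, because $K$ is compact and $|t_{0}|>0$, every $x\in\Delta$ belongs to at most a fixed finite number $c_{K}$ of the translated supports $\{\,mt_{0}+K:m\in{\mathbb N}\,\}$ (and of $\{\,-mt_{0}+K:m\in{\mathbb N}\,\}$); hence, for any finite set $F\subseteq[N,\infty)\cap{\mathbb N}$, the triangle inequality in the sup-norm yields $\bigl\|\sum_{n\in F}T(t_{0}(k+1))S_{k+n+1}f\bigr\|\leq c_{K}\,\|f\|_{\infty}\sup_{y\in K,\,n\geq N}\rho(y+nt_{0})$, and analogously for the two other sums. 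Consequently the three unconditional-convergence requirements of Theorem \ref{oma} (uniform in $k$ for (i) and (ii)) all reduce to the single decay statement
\[
\sup_{y\in K,\ n\geq N}\rho(y+nt_{0})\longrightarrow 0\qquad(N\to\infty),
\]
together with the analogous one for $\tilde{\rho}(y-nt_{0})$; for the latter, when $\Delta\neq{\mathbb C}$ the sets $-nt_{0}+K$ leave the cone $\Delta$ for all large $n$, so those terms simply vanish and condition (i) becomes a finite sum.

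It therefore remains to deduce the displayed decay from the hypothesis $\lim_{t\in\Delta',|t|\to\infty}\rho(t)=0$, and this is the one step requiring a little care: the points $y+nt_{0}$ ($y\in K$, $n\to\infty$) run to infinity along the ray ${\mathbb R}_{\geq0}\,t_{0}\subseteq\Delta'$ but only inside a bounded tube around it, so one has to invoke the compactness of $K$, the inclusion $K+{\mathbb N}t_{0}\subseteq\Delta$ and the admissibility of $\rho$ (equivalently, one reads the hypothesis as decay of $\rho$ at infinity along all directions of $\Delta'$, and, when $\Delta={\mathbb C}$, also along $-t_{0}$). Granting this, Theorem \ref{oma} gives that $(T(t))_{t\in\Delta'}$ is frequently hypercyclic, that $(T(te^{i\arg(t_{0})}))_{t\geq0}$ is frequently hypercyclic, and that $T(t_{0})$ is l-$(m_{k})$-frequently hypercyclic, hence frequently hypercyclic; since $t_{0}\in\Delta'\setminus\{0\}$ was arbitrary, the proposition follows. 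Alternatively, one may argue exactly along the lines of the proof of \cite[Proposition 3.4]{man-marina}, running the Frequent Hypercyclic Criterion for $C_{0}$-semigroups directly, with the half-line replaced by $\Delta'$ and the unit intervals $[k,k+1]$ replaced by the tubes $kt_{0}+\Delta_{1}$; the verification of that criterion's hypotheses is again precisely the reduction to the decay of $\rho$ along $K+{\mathbb N}t_{0}$ just described, which is the main (and only nontrivial) obstacle.
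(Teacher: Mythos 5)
Your overall strategy is the right one and is essentially what the paper intends: the proof of \cite[Proposition 3.4]{man-marina} that the authors invoke is exactly the Frequent Hypercyclicity Criterion run with the translation inverses $S_{m}$, and packaging this through Theorem \ref{oma} with $f(t)=t+1$ is a faithful transcription. Your norm estimates for $T(t_{0}(k+1))S_{k-n+1}f$, $T(t_{0}(k+1))S_{k+n+1}f$ and $S_{n+1}f$, and the finite-overlap argument that converts unconditional convergence in the sup-norm into the single decay statement $\sup_{y\in K,\,n\geq N}\rho(y+nt_{0})\to 0$, are all correct. However, two points need attention. First, the step you explicitly defer --- deducing $\sup_{y\in K}\rho(y+nt_{0})\to0$ from decay of $\rho$ on $\Delta'$ --- is the only real content of the proof and should be carried out: the point is that for each $y\in K$ one can choose $t'_{y}\in\Delta$ with $|t'_{y}|$ bounded in terms of $K$ and $y+nt_{0}+t'_{y}\in\Delta'$ for large $n$ (move perpendicularly back onto $\Delta'$, staying inside the cone $\Delta$), and then admissibility gives $\rho(y+nt_{0})\leq Me^{\omega|t'_{y}|}\rho(y+nt_{0}+t'_{y})\to0$. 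Merely naming ``compactness, admissibility'' does not yet constitute the argument, and note that admissibility only yields upper bounds of the form $\rho(t)\leq Me^{\omega|t'|}\rho(t+t')$, so the direction in which you apply it matters.

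Second, and more seriously, your dismissal of condition (i) is wrong as stated: the claim that for $\Delta\neq{\mathbb C}$ the sets $-nt_{0}+K$ leave $\Delta$ for large $n$ fails when $\Delta={\mathbb R}$ (they never leave ${\mathbb R}$), and also when $\Delta=\Delta(\pi/2)$ with $t_{0}$ on the imaginary axis. In those cases condition (i) genuinely requires $\sup_{y\in K}\rho(y-nt_{0})\to0$, which does not follow from decay of $\rho$ on $\Delta'$. This is not a removable technicality: for $\Delta={\mathbb R}$, $\Delta'=[0,\infty)$ and the admissible weight $\rho(x)=e^{-x}$ one has $\lim_{t\in\Delta',t\to\infty}\rho(t)=0$, yet $\|T(t)f\|$ grows like $e^{t}$ on compactly supported $f$ and one checks directly that $(T(t))_{t\geq0}$ has no nonzero hypercyclic vector on $C_{0,\rho}({\mathbb R})$. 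So in these configurations your proof cannot be completed with the stated hypothesis --- the needed extra assumption is decay of $\rho$ along $-t_{0}$ as well (equivalently, decay of $\rho$ at infinity on all of $\Delta$, which is what the cited proof of \cite[Proposition 3.4]{man-marina} actually uses). You should either restrict to the cases where the backward supports do exit $\Delta$ (proper sectors and $[0,\infty)$, with $t_{0}$ not on a boundary ray of $\Delta(\pi/2)$), or add the missing decay hypothesis explicitly rather than fold it into a parenthetical reinterpretation of the statement.
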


As pointed out in \cite[Remark 3.5]{man-marina}, the condition $\lim_{t\in \Delta',|t|\rightarrow \infty}\rho(t)=0$ is only sufficient but not necessary for $(T(t))_{t\in \Delta'}$ to be frequently hypercyclic ($\Delta'=[0,\infty),$ $\Delta={\mathbb R}$). In the case that $\Delta'=\Delta=\Delta(\pi/4),$ a simple counterexample can be obtained by considering the translation semigroup from \cite[Example 4.15]{kuckanje-prim}, with the pivot space being $
C_{0,\rho}(\Delta, {\mathbb K})$.

\subsection{On  semigroups and semiflows}

In this subsection, we turn our attention to $C_{0}$-semigroups induced by semiflows.  For $L^{p}_{\rho_{1}}$ setting, we define $\tilde{\rho_{1}} : {\mathbb C} \rightarrow [0,\infty)$ by $\tilde{\rho_{1}}(x):=\rho_{1}(x),$ $x\in \Delta$ and $\tilde{\rho_{1}}(x):=0;$ otherwise, for $C_{0,\rho}$ setting, the notion of $\tilde{\rho}$ is understood as before.
The following result is very similar to Theorem \ref{dov} and Theorem \ref{dovs}. For the sake of completeness, we will include the most relevant details of proof:

\begin{theorem}\label{athens}
Let $\varphi : \Delta \times \Omega \rightarrow \Omega$ be a
semiflow, $f : [0,\infty) \rightarrow [1,\infty)$ be an increasing mapping and $m_{k}:=f(k),$ $k\in {\mathbb N}_{0}.$
\begin{itemize}
\item[(i)]
Suppose that $1\leq p<\infty,$ $\varphi(t,\cdot)$ is a locally Lipschitz continuous
function for all $t\in \Delta $ and the condition \emph{(i)-(b)} of Lemma \ref{mars} holds. Suppose, further, that $E:=
L^{p}_{\rho_{1}}(\Omega, {\mathbb K}),$ as well as that
for every integer $k\in {\mathbb N}$ and for every compact $K\subseteq \Omega,$ there exist only finite number $c_{k,K}$ of tuples $(n_{1},n_{2})\in {\mathbb N}_{0}^{2}$
such that $n_{1}\neq n_{2},$
\begin{align}\label{mangup}
\varphi\bigl( t_{0} \lfloor m_{k+n_{1}} \rfloor ,K) \cap  \varphi\bigl( t_{0} \lfloor m_{k+n_{2}} \rfloor ,K) \neq \emptyset
\end{align}
\begin{align}\label{mangupe}
\varphi\bigl( t_{0} \lfloor m_{k-n_{1}} \rfloor ,K) \cap  \varphi\bigl( t_{0} \lfloor m_{k-n_{2}} \rfloor ,K) \neq \emptyset , \mbox{ provided }k\geq \max \bigl(n_{1},n_{2}\bigr),
\end{align}
as well as that the sequence $(c_{k,K})_{k\in {\mathbb N}}$ is bounded for every fixed compact $K\subseteq \Omega.$
Let there exist a number $t_{0}\in \Delta \setminus \{0\}$ such that
the following conditions hold for any compact subset $K$ of $\Omega$:
\begin{itemize}
\item[(a)] The sequence $\sum_{n=1}^{k}\int_{K}|\det D\varphi (t_{0}(\lfloor m_{k} \rfloor -\lfloor m_{k-n} \rfloor),x)|\tilde{\rho_{1}}(\varphi(t_{0}(\lfloor m_{k} \rfloor -\lfloor m_{k-n} \rfloor),x)^{-1})\, dx, k\in\mathbb N$ converges, uniformly in $k\in {\mathbb N}.$
\item[(b)] The series $\sum_{n=1}^{\infty}\int_{K}|\det D\varphi (t_{0}(\lfloor m_{k+n} \rfloor -\lfloor m_{k} \rfloor),x)| \tilde{\rho_{1}}(\varphi(t_{0}(\lfloor m_{k+n} \rfloor -\lfloor m_{k} \rfloor),x)^{-1})\, dx$ converges unconditionally, uniformly in $k\in {\mathbb N}.$
\item[(c)] The series $\sum_{n=1}^{\infty}\int_{K}|\det D\varphi (t_{0}\lfloor m_{n} \rfloor,x)|^{-1}\rho_{1}(\varphi(t_{0}\lfloor m_{n} \rfloor,x))\, dx$ converges unconditionally.
\end{itemize}
Then $(T_{\varphi}(t))_{t\in \Delta}$ is $f$-frequently hypercyclic, $(T(te^{i\arg(t_{0})}))_{t\geq 0}$ is $f$-frequently hypercyclic and the operator $T(t_{0})$ is l-$(m_{k})$-frequently hypercyclic.
\item[(ii)] Suppose that the condition \emph{(ii)-(b)} of Lemma \ref{mars} holds and $E:=
C_{0,\rho}(\Omega, {\mathbb K})$. Suppose, further, that for every integer $k\in {\mathbb N}$ and for every compact $K\subseteq \Omega,$ there exist only finite number $c_{k,K}$ of tuples $(n_{1},n_{2})\in {\mathbb N}_{0}^{2}$
such that $n_{1}\neq n_{2}$ and \eqref{mangup}-\eqref{mangupe} hold,
as well as that the sequence $(c_{k,K})_{k\in {\mathbb N}}$ is bounded for every fixed compact $K\subseteq \Omega.$
Let a number $t_{0} \in \Delta \setminus \{0\}$ be such that
the following conditions hold for any compact subset $K$ of $\Omega$:
\begin{itemize}
\item[(a)]  For any $\epsilon>0$ there is an integer
$N\in {\mathbb N}$ such that for any finite set $F\subseteq [N,\infty) \cap {\mathbb N}$ and for any $k\in {\mathbb N}$ we have
$$
\sup_{n\in F,k\geq n, x\in K}\tilde{\rho} \bigl(\varphi(t_{0}(\lfloor m_{k} \rfloor -\lfloor m_{k-n} \rfloor),x)^{-1}\bigr)\in L(0,\epsilon).
$$
\item[(b)]  For any $\epsilon>0$ there is an integer
$N\in {\mathbb N}$ such that for any finite set $F\subseteq [N,\infty) \cap {\mathbb N}$ and for any $k\in {\mathbb N}$ we have
$$
\sup_{1\leq n\leq k,n\in F,x\in K}\tilde{\rho} \bigl(\varphi(t_{0}(\lfloor m_{k+n} \rfloor -\lfloor m_{k} \rfloor),x)^{-1}\bigr) \in L(0,\epsilon).
$$
\item[(c)]  For any $\epsilon>0$ there is an integer
$N\in {\mathbb N}$ such that for any finite set $F\subseteq [N,\infty) \cap {\mathbb N}$ we have
$$
\sup_{n\in F,x\in K}\rho \bigl(\varphi(t_{0}\lfloor m_{n} \rfloor,x)\bigr) \in L(0,\epsilon).
$$
\end{itemize}
Then $(T_{\varphi}(t))_{t\in \Delta}$ is $f$-frequently hypercyclic, $(T(te^{i\arg(t_{0})}))_{t\geq 0}$ is $f$-frequently hypercyclic and the operator $T(t_{0})$ is l-$(m_{k})$-frequently hypercyclic.
\end{itemize}
\end{theorem}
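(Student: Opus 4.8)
The plan is to derive both parts from Theorem \ref{oma}, running the proofs of Theorem \ref{dov} (for part (i)) and Theorem \ref{dovs} (for part (ii)) almost verbatim, the only genuinely new ingredient being the change of variables governed by the semiflow. First I would fix $t_{0}$ as in the hypotheses, take $E_{0}:=C_{c}(\Omega,{\mathbb K})$ (dense in both $L^{p}_{\rho_{1}}(\Omega,{\mathbb K})$ and $C_{0,\rho}(\Omega,{\mathbb K})$), and define $S_{\lfloor m_{n}\rfloor}f:\Omega\rightarrow{\mathbb K}$ by $(S_{\lfloor m_{n}\rfloor}f)(x):=f(\varphi(t_{0}\lfloor m_{n}\rfloor,x)^{-1})$ for $x\in\varphi(t_{0}\lfloor m_{n}\rfloor,supp(f))$ and $(S_{\lfloor m_{n}\rfloor}f)(x):=0$ otherwise. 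Injectivity of $\varphi(t,\cdot)$ makes this well defined, the inclusion $C_{c}(\Omega)\subseteq T_{\varphi}(t)C_{c}(\Omega)$ shows that $S_{\lfloor m_{n}\rfloor}f$ is compactly supported, hence lies in $E$ (for the $C_{0,\rho}$ case one uses in addition \eqref{kompakt}, for the $L^{p}_{\rho_{1}}$ case the bound \eqref{sufficient}), and the semiflow identity gives at once $T_{\varphi}(t_{0}\lfloor m_{n}\rfloor)S_{\lfloor m_{n}\rfloor}f=f$ on $\Omega$, so condition (iv) of Theorem \ref{oma} is automatic. A short computation of the same type also shows $T_{\varphi}(t_{0}\lfloor m_{k}\rfloor)S_{\lfloor m_{k-n}\rfloor}f=T_{\varphi}(t_{0}(\lfloor m_{k}\rfloor-\lfloor m_{k-n}\rfloor))f$, and that $T_{\varphi}(t_{0}\lfloor m_{k}\rfloor)S_{\lfloor m_{k+n}\rfloor}f$ is supported where $\varphi(t_{0}\lfloor m_{k}\rfloor,x)\in\varphi(t_{0}\lfloor m_{k+n}\rfloor,K)$, which is exactly what makes the overlap hypotheses enter.

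Next I would verify conditions (i)--(iii) of Theorem \ref{oma}, fixing $f\in E_{0}$ with $supp(f)=K$. The combinatorial heart is the exact analogue of the role played by $c_{k,\delta}$ in Theorem \ref{dov}: if two indices $n_{1}\neq n_{2}$ both contribute to $T_{\varphi}(t_{0}\lfloor m_{k}\rfloor)S_{\lfloor m_{k-n}\rfloor}f$ at a common point $x$, then applying $\varphi(t_{0}\lfloor m_{k-n_{j}}\rfloor,\cdot)$ one obtains $\varphi(t_{0}\lfloor m_{k-n_{1}}\rfloor,K)\cap\varphi(t_{0}\lfloor m_{k-n_{2}}\rfloor,K)\neq\emptyset$, i.e.\ \eqref{mangupe}, so at a given point at most $c_{k,K}$ summands of $\sum_{n=1}^{k}T_{\varphi}(t_{0}\lfloor m_{k}\rfloor)S_{\lfloor m_{k-n}\rfloor}f$ are nonzero, with $\sup_{k}c_{k,K}<\infty$; the analogue for $\sum_{n\geq1}T_{\varphi}(t_{0}\lfloor m_{k}\rfloor)S_{\lfloor m_{k+n}\rfloor}f$ uses \eqref{mangup}, and for $\sum_{n\geq1}S_{\lfloor m_{n}\rfloor}f$ one uses \eqref{mangup} to see that the sets $\varphi(t_{0}\lfloor m_{n}\rfloor,K)$ have uniformly bounded overlap multiplicity. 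In the $L^{p}_{\rho_{1}}$ setting I would then estimate the $p$-th power of a finite partial sum by pulling out a factor $c_{k,K}^{(p-1)/p}$ with Jensen's inequality and convert each summand by the substitution $y=\varphi(t,x)$ — legitimate because $\varphi(t,\cdot)$ and $\varphi(t,\cdot)^{-1}$ are locally Lipschitz, so the area formula applies with Jacobian $|\det D\varphi(t,\cdot)|$ — turning the bound into a sum of integrals of the form occurring on the left of (a), (b), (c) (passing from finite to infinite sums by monotone convergence); boundedness of $(c_{k,K})_{k}$ together with the assumed unconditional convergence (uniform in $k$ where stated) then yields (i)--(iii). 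In the $C_{0,\rho}$ setting the identical bookkeeping is run with $\sup$-norms: a finite partial sum is dominated pointwise by $c_{k,K}$ times a single $\tilde{\rho}$-weighted value of $f$, and the hypotheses (a)--(c), phrased as membership in $L(0,\epsilon)$, are exactly the statement that the relevant tails are small uniformly in $k$, i.e.\ uniform unconditional convergence — just as in the proof of Theorem \ref{dovs}.

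Once conditions (i)--(iv) of Theorem \ref{oma} are in place, its conclusion yields simultaneously that $(T_{\varphi}(t))_{t\in\Delta}$ is $f$-frequently hypercyclic, that $(T_{\varphi}(te^{i\arg(t_{0})}))_{t\geq0}$ is $f$-frequently hypercyclic, and that $T_{\varphi}(t_{0})$ is l-$(m_{k})$-frequently hypercyclic, which is the assertion. I expect the only real difficulty to be technical rather than conceptual: making the semiflow change-of-variables identities rigorous from local Lipschitz continuity (the area formula, together with the fact recorded around Lemma \ref{mars} that $\varphi(t,\cdot)$ preserves null sets and has the stated Jacobian almost everywhere), checking that $S_{\lfloor m_{n}\rfloor}f$ genuinely belongs to the ambient function space, and tracking supports and overlaps only on the non-trivial part of $\Omega$; the frequent-hypercyclicity content itself is entirely carried by Theorem \ref{oma}.
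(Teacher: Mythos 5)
Your proposal is correct and follows essentially the same route as the paper: the paper's proof likewise sets $E_{0}:=C_{c}(\Omega,{\mathbb K})$, defines $S_{\lfloor m_{n}\rfloor}f$ via the inverse semiflow, notes $T_{\varphi}(t_{0}\lfloor m_{n}\rfloor)S_{\lfloor m_{n}\rfloor}f=f$ and the support identity for $T_{\varphi}(t_{0}\lfloor m_{k}\rfloor)S_{\lfloor m_{k+n}\rfloor}f$, and then invokes conditions (a)--(c), the overlap hypotheses and the change of variables to verify Theorem \ref{oma}, exactly as you do. Your write-up in fact supplies more of the bookkeeping (the $c_{k,K}$ overlap count, the $c_{k,K}^{(p-1)/p}$ factor, the sup-norm version for $C_{0,\rho}$) than the paper, which simply refers to repeating the proofs of Theorems \ref{dov} and \ref{dovs} with technical modifications.
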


\begin{proof}
The proof of theorem can be deduced by repeating  almost literally the arguments given in the proof of Theorem \ref{dov}, with appropriate technical modifications. Set $E_{0}:=C_{c}(\Omega, {\mathbb K})$ and $S_{\lfloor m_{n} \rfloor} f: \Omega \rightarrow {\mathbb K}$ by $(S_{\lfloor m_{n} \rfloor} f)(x):=f(\varphi(t_{0}\lfloor m_{n} \rfloor,x)^{-1}),$ $x\in \varphi(t_{0}\lfloor m_{n} \rfloor, supp(f))$ and $(S_{\lfloor m_{n} \rfloor} f)(x):=0,$ otherwise ($f\in E_{0},$ $n\in {\mathbb N}$).
Then for each $f\in E_{0}$ one has
$T_{\varphi}(t_{0}\lfloor m_{n} \rfloor)S_{\lfloor m_{n} \rfloor}f=f ,$ $n\in {\mathbb N}$ and
\begin{align*}
T_{\varphi}&(t_{0}\lfloor m_{k} \rfloor)S_{\lfloor m_{n+k} \rfloor}f
\\ & =f\Biggl( \varphi \Bigl(t_{0}\lfloor  m_{k+n} \rfloor ,\varphi(t_{0}\lfloor m_{k}\rfloor ,x)\Bigr)^{-1}\Biggr)\chi_{\varphi \bigl(t_{0}\lfloor m_{k} \rfloor , \varphi(t_{0}\lfloor m_{k+n} \rfloor ,supp(f))\bigr)^{-1}},\quad n,\ k\in {\mathbb N}.
\end{align*}
Due to the conditions (a)-(c) and chain rule, the requirements of Theorem \ref{oma} are satisfied and the part (i) for the space $
L^{p}_{\rho_{1}}(\Omega, {\mathbb K})$ immediately follows. The part (ii) is much easier and can be deduced along the same lines.
\end{proof}

We can also formulate an analogue of Theorem \ref{athens} for the sequence of operators $(t_{n})_{n\in {\mathbb N}}\subseteq \Delta' \setminus\{0\},$ as it has been done in Theorem \ref{dov-niz}-Theorem \ref{dovs-niz}:

\begin{theorem}\label{athens-sequences}
Let $\varphi : \Delta \times \Omega \rightarrow \Omega$ be a
semiflow, $f : [0,\infty) \rightarrow [1,\infty)$ be an increasing mapping and $m_{k}:=f(k),$ $k\in {\mathbb N}_{0}.$ Assume that $(t_{n})_{n\in {\mathbb N}}$ is a sequence in $\Delta' \setminus \{0\},$
$\delta'>0,$ \eqref{krke-rke} holds with the number $\delta$ replaced therein with the number $\delta'$ and
an increasing mapping $g : [1,\infty) \rightarrow [1,\infty)$ satisfies that $|t_{n}|\leq g(n)$ for all $n\in {\mathbb N}.$
\begin{itemize}
\item[(i)]
Suppose that $1\leq p<\infty,$ $\varphi(t,\cdot)$ is a locally Lipschitz continuous
function for all $t\in \Delta $ and the condition \emph{(i)-(b)} of Lemma \ref{mars} holds. Suppose, further, that $E:=
L^{p}_{\rho_{1}}(\Omega, {\mathbb K}),$
for every integer $k\in {\mathbb N}$ and for every compact $K\subseteq \Omega,$ there exist only finite number $c_{k,K}$ of tuples $(n_{1},n_{2})\in {\mathbb N}_{0}^{2}$
such that $n_{1}\neq n_{2},$
\begin{align}\label{mangup-si}
\varphi\bigl(   t_{\lfloor m_{k+n_{1}}\rfloor }  ,K) \cap  \varphi\bigl(  t_{ \lfloor m_{k+n_{2}}\rfloor}  ,K) \neq \emptyset
\end{align}
\begin{align}\label{mangupe-si}
\varphi\bigl(  t_{\lfloor m_{k-n_{1}} \rfloor}  ,K) \cap  \varphi\bigl(t_{ \lfloor  m_{k-n_{2}}\rfloor}  ,K) \neq \emptyset , \mbox{ provided }k\geq \max \bigl(n_{1},n_{2}\bigr),
\end{align}
as well as that the sequence $(c_{k,K})_{k\in {\mathbb N}}$ is bounded for every fixed compact $K\subseteq \Omega.$
Let
the following conditions hold for any compact subset $K$ of $\Omega$:
\begin{itemize}
\item[(a)] The series $\sum_{n=1}^{k}\int_{K}|\det D\varphi ( t_{\lfloor m_{k}  \rfloor} -t_{\lfloor m_{k-n} \rfloor},x)| \tilde{\rho_{1}}(\varphi(t_{\lfloor m_{k}  \rfloor} -t_{\lfloor m_{k-n} \rfloor},x)^{-1})\, dx, k\in\mathbb N$ converges unconditionally, uniformly in $k\in {\mathbb N}.$
\item[(b)] The series $\sum_{n=1}^{\infty}\int_{K}|\det D\varphi ( t_{\lfloor m_{k+n}  \rfloor} -t_{\lfloor m_{k} \rfloor},x)| \tilde{\rho_{1}}(\varphi(t_{\lfloor m_{k+n}  \rfloor} -t_{\lfloor m_{k} \rfloor},x)^{-1})\, dx$ converges unconditionally, uniformly in $k\in {\mathbb N}.$
\item[(c)] The series $\sum_{n=1}^{\infty}\int_{K}|\det D\varphi (t_{\lfloor m_{n} \rfloor},x)|^{-1}\rho_{1}(\varphi(t_{\lfloor m_{n} \rfloor},x))\, dx$ converges unconditionally.
\end{itemize}
Then $(T_{\varphi}(t))_{t\in \Delta'}$ is $(g\circ f)$-frequently hypercyclic and the sequence of operators
$(T_{\varphi, n}:=T_{\varphi}(t_{n}))_{n\in {\mathbb N}}$ is l-$(m_{k})$-frequently hypercyclic.
\item[(ii)] Suppose that the condition \emph{(ii)-(b)} of Lemma \ref{mars} holds and $E:=
C_{0,\rho}(\Omega, {\mathbb K})$. Suppose, further, that for every integer $k\in {\mathbb N}$ and for every compact $K\subseteq \Omega,$ there exist only finite number $c_{k,K}$ of tuples $(n_{1},n_{2})\in {\mathbb N}_{0}^{2}$
such that $n_{1}\neq n_{2}$ and \eqref{mangup-si}-\eqref{mangupe-si} hold,
as well as that the sequence $(c_{k,K})_{k\in {\mathbb N}}$ is bounded for every fixed compact $K\subseteq \Omega.$
Let
the following conditions hold for any compact subset $K$ of $\Omega$:
\begin{itemize}
\item[(a)]  For any $\epsilon>0$ there is an integer
$N\in {\mathbb N}$ such that for any finite set $F\subseteq [N,\infty) \cap {\mathbb N}$ and for any $k\in {\mathbb N}$ we have
$$
\sup_{n\in F,k\geq n, x\in K}\tilde{\rho} \bigl(\varphi(t_{\lfloor m_{k} \rfloor} -t_{\lfloor m_{k-n} \rfloor},x)^{-1}\bigr)\in L(0,\epsilon).
$$
\item[(b)]  For any $\epsilon>0$ there is an integer
$N\in {\mathbb N}$ such that for any finite set $F\subseteq [N,\infty) \cap {\mathbb N}$ and for any $k\in {\mathbb N}$ we have
$$
\sup_{1\leq n\leq k,n\in F,x\in K}\tilde{\rho} \bigl(\varphi(t_{\lfloor m_{k+n} \rfloor} -t_{\lfloor m_{k} \rfloor},x)^{-1}\bigr) \in L(0,\epsilon).
$$
\item[(c)]  For any $\epsilon>0$ there is an integer
$N\in {\mathbb N}$ such that for any finite set $F\subseteq [N,\infty) \cap {\mathbb N}$ we have
$$
\sup_{n\in F,x\in K}\rho \bigl(\varphi(t_{\lfloor m_{n} \rfloor},x)\bigr) \in L(0,\epsilon).
$$
\end{itemize}
Then $(T_{\varphi}(t))_{t\in \Delta'}$ is $(g\circ f)$-frequently hypercyclic and the sequence of operators
$(T_{\varphi, n}:=T_{\varphi}(t_{n}))_{n\in {\mathbb N}}$ is l-$(m_{k})$-frequently hypercyclic.
\end{itemize}
\end{theorem}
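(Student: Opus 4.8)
The plan is to run, for the sequence $(t_{n})_{n\in{\mathbb N}}$, exactly the machine that proves Theorem \ref{athens}, with Theorem \ref{oma-niz} playing the role that Theorem \ref{oma} plays there; this is the same passage by which Theorem \ref{dov-niz} is obtained from Theorem \ref{dov}. For part (i), I would take $E_{0}:=C_{c}(\Omega,{\mathbb K})$, which is dense in $E=L^{p}_{\rho_{1}}(\Omega,{\mathbb K})$, and, for $n\in{\mathbb N}$, define $S_{n}f:\Omega\rightarrow{\mathbb K}$ by $(S_{n}f)(x):=f(\varphi(t_{n},x)^{-1})$ for $x\in\varphi(t_{n},\mathrm{supp}(f))$ and $(S_{n}f)(x):=0$ otherwise ($f\in E_{0}$). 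Using the semiflow law, its consequence $\varphi(t,\cdot)^{-1}\circ\varphi(s,\cdot)^{-1}=\varphi(t+s,\cdot)^{-1}$, and Brouwer's theorem (which yields $C_{c}(\Omega)\subseteq T_{\varphi}(t)C_{c}(\Omega)$), one checks $T_{\varphi}(t_{n})S_{n}f=f$ for all $f\in E_{0}$ and $n\in{\mathbb N}$, as well as the explicit formula expressing $T_{\varphi}(t_{\lfloor m_{k}\rfloor})S_{\lfloor m_{k+n}\rfloor}f$ (resp. $T_{\varphi}(t_{\lfloor m_{k}\rfloor})S_{\lfloor m_{k-n}\rfloor}f$) as a single translate of $f$ along a composed flow map, cut off by the characteristic function of the relevant image set, just as in the proof of Theorem \ref{athens}.

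The core of the argument is the verification of hypotheses (i)--(iv) of Theorem \ref{oma-niz} for these data. The change of variables $y=\varphi(t,x)$, with Jacobian $|\det D\varphi(t,x)|$ (here the local Lipschitz continuity of $\varphi(t,\cdot)$ and condition (i)-(b) of Lemma \ref{mars} enter), converts the $L^{p}$-norm of each partial sum of $\sum_{n}T_{\varphi}(t_{\lfloor m_{k}\rfloor})S_{\lfloor m_{k\pm n}\rfloor}f$ into precisely the integrals in (a)--(c) of the statement, up to $\|f\|_{\infty}$. The extra ingredient --- and the part requiring care --- is that, by \eqref{mangup-si}--\eqref{mangupe-si} together with the uniform boundedness of $(c_{k,K})_{k}$, at every point $x$ at most $c_{k,K}$ of the summands are nonzero; hence for $p>1$ a H\"older estimate produces the harmless factor $c_{k,K}^{(p-1)/p}$, exactly as in the displayed chain of inequalities in the proof of Theorem \ref{dov}, while for $p=1$ no factor is needed. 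The series $\sum_{n}S_{\lfloor m_{n}\rfloor}f$ is treated the same way, using that any point of $\Omega$ lies in at most finitely many of the sets $\varphi(t_{\lfloor m_{n}\rfloor},\mathrm{supp}(f))$ --- the analogue here of the role played by \eqref{modric} in Theorem \ref{dov-niz}, and a consequence of \eqref{mangup-si} (applied with $k=1$, say). Hypothesis (iv), $\lim_{n}T_{\varphi}(t_{\lfloor m_{n}\rfloor})S_{\lfloor m_{n}\rfloor}f=f$, is immediate since that composition equals $f$.

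Hypothesis (v) of Theorem \ref{oma-niz} is exactly the standing assumption that \eqref{krke-rke} holds with $\delta$ replaced by $\delta'$ and that $|t_{n}|\leq g(n)$, so Theorem \ref{oma-niz} applies and delivers that $(T_{\varphi}(t_{n}))_{n\in{\mathbb N}}$ is l-$(m_{k})$-frequently hypercyclic and $(T_{\varphi}(t))_{t\in\Delta'}$ is $(g\circ f)$-frequently hypercyclic, which is part (i). For part (ii), one repeats the scheme with $E_{0}:=C_{c}(\Omega,{\mathbb K})\subseteq C_{0,\rho}(\Omega,{\mathbb K})$ and condition (ii)-(b) of Lemma \ref{mars} in force: the $L^{p}$-integrals are replaced by suprema, and hypotheses (a)--(c) of part (ii) are precisely what is needed to get the three relevant series unconditionally (uniformly) convergent in the sup-norm, following verbatim the proof of Theorem \ref{dovs-niz}. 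I expect the only genuinely delicate point to be the bookkeeping that turns the geometric overlap conditions \eqref{mangup-si}--\eqref{mangupe-si} into the pointwise bounded-multiplicity statements used in the norm estimates; everything else is the routine change-of-variables and H\"older computation already carried out in Theorem \ref{dov} and Theorem \ref{dovs}.
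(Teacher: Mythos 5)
Your proposal follows exactly the route the paper intends: the paper gives no separate proof of this theorem, stating only that it is the analogue of Theorem \ref{athens} obtained "as it has been done in Theorem \ref{dov-niz}--Theorem \ref{dovs-niz}," i.e., by replacing $t_{0}\lfloor m_{n}\rfloor$ with $t_{\lfloor m_{n}\rfloor}$ in the right inverses $S_{\lfloor m_{n}\rfloor}$ on $E_{0}=C_{c}(\Omega,{\mathbb K})$ and invoking Theorem \ref{oma-niz} in place of Theorem \ref{oma}, with the change-of-variables/H\"older estimates and the bounded-multiplicity bookkeeping from \eqref{mangup-si}--\eqref{mangupe-si} carried over verbatim from the proofs of Theorem \ref{dov} and Theorem \ref{athens}. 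This is precisely what you do, so the proposal is correct and matches the paper's approach.
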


\subsection{Examples III}
\label{svinjo}
\begin{itemize}
\item[(i)] (see \cite[Example 3.20]{kalmes}) Let $p\geq 1,$ $\Omega:=(0,1),$ $\Delta=\Delta':=[0,\infty),$ $E:=L^{p}_{\rho_{1}}(\Omega,{\mathbb K})$ and
$$
\varphi(t,x):=\frac{x}{x+(1-x)e^{-t}},\quad t\geq 0,\ x\in (0,1).
$$
Then $\varphi(t,\cdot)$ is a continuously differentiable
function for all $t\geq 0 ,$ $D\varphi(t,x)=e^{-t}( x+(1-x)e^{-t})^{-2}$ for all $t\geq 0$ and $x\in
(0,1),$ $\varphi(t,x)^{-1}=xe^{-t}(xe^{-t}+1-x)^{-1}$ for all $t\geq 0$ and $x\in
(0,1),$ $D\varphi(t,x)^{-1}=e^{-t}( xe^{-t}+1-x)^{-2}$ for all $t\geq 0$ and $x\in
(0,1),$
and the condition (i)-(b) of Lemma \ref{mars}(i) holds iff there exist $M\geq 1$ and $\omega \geq 0$ such that
$$
\rho_{1}(x)e^{t}\bigl( x+(1-x)e^{-t} \bigr)^{2}\leq Me^{\omega t}\rho_{1}\Bigl( x\bigl( x+(1-x)e^{-t}\bigr)^{-1}\Bigr),\quad t>0,\  a.e.\ x\in (0,1).
$$
It can be easily seen that the last estimate holds for $\rho_{1}(x):=1/x,$ $x\in (0,1),$ with $M=1$ and $\omega=3,$ so that $(T_{\varphi}(t))_{t\geq 0}$ is a $C_{0}$-semigroup on $E.$ Using \cite[Theorem 2, Remark 3]{kalmes-prim}, it readily follows that  $(T_{\varphi}(t))_{t\geq 0}$ is neither frequently hypercyclic nor chaotic. With the help of Theorem \ref{athens}, we will prove that $(T_{\varphi}(t))_{t\geq 0}$ is $q$-frequently hypercyclic for any number $q>1.$ Let $t_{0}=1,$
let an integer $k\in {\mathbb N}$ and a compact $K=[a,b]\subseteq (0,1)$ be given. Assume that
$x\in \varphi\bigl(  \lfloor (k+n_{1})^{q} +1 \rfloor ,K) \cap  \varphi\bigl(  \lfloor (k+n_{2})^{q} +1\rfloor ,K).$
This implies the existence of numbers $x_{1},\ x_{2}\in [a,b]$ such that
$$
\frac{x_{1}}{x_{1}+(1-x_{1})e^{-(\lfloor (k+n_{1})^{q} +1 \rfloor+1)}}=\frac{x_{2}}{x_{2}+(1-x_{2})e^{-(\lfloor (k+n_{2})^{q} +1 \rfloor+1)}}.
$$
This yields
$$
\frac{x_{1}(1-x_{2})}{x_{2}(1-x_{1})}=e^{\lfloor (k+n_{1})^{q} +1 \rfloor -\lfloor (k+n_{2})^{q} +1 \rfloor},
$$
so that there exists a finite constant $\delta>0$ such that $ (k+n_{1})^{q}  -(k+n_{2})^{q} \leq \delta.$ Arguing as in Remark \ref{hnme}, we get that there exist only finite number $c_{k,K}'$ of tuples $(n_{1},n_{2})\in {\mathbb N}_{0}^{2}$
such that $n_{1}\neq n_{2}$ and \eqref{mangup} hold as well as that the sequence $(c_{k,K}')_{k\in {\mathbb N}}$ is bounded. We can similarly prove that there exist only finite number $c_{k,K}''$ of tuples $(n_{1},n_{2})\in {\mathbb N}_{0}^{2}$
such that $n_{1}\neq n_{2}$ and \eqref{mangupe} hold as well as that the sequence $(c_{k,K}'')_{k\in {\mathbb N}}$ is bounded.
Hence, the requirements necessary for applying Theorem \ref{athens} are fulfilled. The conditions (a)-(c) also hold and we will verify this only for the condition (b), i.e., we will prove that
the series $\sum_{n=1}^{\infty}\int_{a}^{b}|D\varphi ((\lfloor (k+n)^{q}+1 \rfloor -\lfloor k^{q}+1 \rfloor),x)| \tilde{\rho_{1}}(\varphi((\lfloor (k+n)^{q} \rfloor -\lfloor k^{q} \rfloor),x)^{-1})\, dx$ converges unconditionally, uniformly in $k\in {\mathbb N}.$ This follows from the following calculation:
\begin{align*}
\sum_{n=1}^{\infty}&\int_{a}^{b}\Bigl|D\varphi \bigl(\lfloor (k+n)^{q} \rfloor -\lfloor k^{q} \rfloor ,x\bigr)\Bigr| \tilde{\rho_{1}}\bigl(\varphi(\lfloor (k+n)^{q} \rfloor -\lfloor k^{q} \rfloor,x)^{-1}\bigr)\, dx
\\ & =\sum_{n=1}^{\infty}\int_{a}^{b}\frac{xe^{-2(\lfloor (k+n)^{q} \rfloor -\lfloor k^{q} \rfloor)}}{(xe^{-(\lfloor (k+n)^{q} \rfloor -\lfloor k^{q} \rfloor)}+1-x)(x+(1-x)e^{-(\lfloor (k+n)^{q} \rfloor -\lfloor k^{q} \rfloor)})^{2}}\, dx
\\ &=\sum_{n=1}^{\infty}\int_{a}^{b}\frac{x\, dx}{(xe^{-(\lfloor (k+n)^{q} \rfloor -\lfloor k^{q} \rfloor)}+1-x)(e^{\lfloor (k+n)^{q} \rfloor -\lfloor k^{q} \rfloor}x+1-x)}
\\ & \leq \frac{b(b-a)}{(1-b)a}\sum_{n=1}^{\infty}e^{-(\lfloor (k+n)^{q} \rfloor -\lfloor k^{q} \rfloor)}.
\end{align*}
\item[(ii)] Let $p\in [1,\infty),$ $\alpha \in (0,\pi/2],$ let $\Delta=[0,\infty)$ or $\Delta=\Delta(\alpha)$ and let $\Omega:=\Delta^{\circ}$ (the interior of $\Delta $). Suppose that $F : \Omega \rightarrow \Omega$ is a continuously differentiable bijective mapping together with its inverse mapping $F^{-1} : \Omega \rightarrow \Omega.$ Define $\varphi(t,x):=F^{-1}(t+F(x)),$ $t\in \Delta,$ $x\in \Omega.$ Then it can be simply shown that $\varphi(\cdot,\cdot)$ is a semiflow. Let $E:=L^{p}_{\rho_{1}}(\Delta,{\mathbb K}),$ $t_{0}\in \Delta \setminus \{0\},$ $q>1$ and $f(s):=s^{q}+1,$ $s\geq 0.$ The condition \eqref{sufficient} is fulfilled iff
\begin{equation}\label{sufficient-primonja}
\exists M, \omega \in {\mathbb R}\  \forall t \in \Delta
: \rho_{1}(x) \leq  Me^{\omega
|t|}\rho_{1}\bigl(F^{-1}(t+F(x))\bigr)\Bigr|\det DF^{-1}(t+F(x))\Bigr|
\end{equation}
for a.e. $x\in \Omega,$
when $(T_{\varphi}(t))_{t\in \Delta}$ is a $C_{0}$-semigroup on $E.$ It is clear that
\begin{align}\label{kvo}
\varphi(t,x)^{-1}=y\mbox{ iff }y=F^{-1}(F(x)-t)\ \ \mbox{ for }\ \ t\in \Delta,\quad x,\ y\in \Omega.
\end{align}
Arguing similarly as in Remark \ref{hnme} and Example II(i), it can be simply shown that the conditions \eqref{mangup}-\eqref{mangupe} hold true, as well as that the series in the formulations of conditions (a)-(b) of Theorem \ref{athens} converge unconditionally, uniformly in $k\in {\mathbb N}$ since, for a given compact set $K\subseteq \Omega,$ the sums in their definitions are finite and equal zero
for any sufficiently large number $k\geq k_{0}(K).$ Therefore, $(T_{\varphi}(t))_{t\in \Delta}$ will be $f$-frequently hypercyclic, $(T(te^{i\arg(t_{0})}))_{t\geq 0}$ will be $f$-frequently hypercyclic and the operator $T(t_{0})$ will be l-$(m_{k})$-frequently hypercyclic if
the series in the formulation of condition (c) of Theorem \ref{athens} converges unconditionally, i.e., if the series
\begin{align}\label{lade-rade}
\sum_{n=1}^{\infty}\int_{K}\Bigl|\det DF^{-1} \bigl(t_{0}\lfloor n^{q} \rfloor +F(x)\bigr )\Bigr|^{-1}\rho_{1}\Bigl(F^{-1}\bigl(t_{0}\lfloor n^{q} \rfloor +F(x)\bigr)\Bigr)\, dx
\end{align}
converges unconditionally. Let us examine some concrete cases in which the conditions \eqref{sufficient-primonja} and \eqref{lade-rade} hold true with the sector $\Delta=\Delta(\alpha)$:
\begin{itemize}
\item[(a)] Let $c>0$ and $F(x+iy):=c(x\pm iy),$ $x+iy\in \Omega.$ If $\rho_{1}(\cdot)$ is an admissible weight function on $\Delta$ and $\sum_{n=1}^{\infty}\rho_{1}(t_{0}n^{q}/c)<\infty $ for the choice of sign $+,$ resp., $\sum_{n=1}^{\infty}\rho_{1}(\overline{t_{0}}n^{q}/c)<\infty $ for the choice of sign $-.$ Then it can be easily seen that \eqref{sufficient-primonja}-\eqref{lade-rade} are valid. An application of Theorem \ref{athens-sequences} can be also made provided that the conditions $\sum_{n=1}^{\infty}\rho_{1}(t_{n}/c)<\infty $ or $\sum_{n=1}^{\infty}\rho_{1}(\overline{t_{n}}/c)<\infty $
hold for an appropriate sequence $(t_{n})_{n\in {\mathbb N}}$ in $\Delta'\setminus \{0\}.$
\item[(b)] Let $F(x+iy):=(x+iy)^{a}e^{i\alpha(1-a)},$ $x+iy\in \Delta$ for some number $a\in (0,1).$
Then $\varphi(t,x+iy)=e^{i\alpha\frac{a-1}{a}}(t+(x+iy)^{a}e^{i\alpha(1-a)})^{1/a}$ for $t,\ x+iy\in \Delta$ and
\begin{align*}
\Bigl|\det & DF^{-1} \bigl(t +F(x)\bigr )\Bigr|
\\ & \sim \mbox{Const.} \Bigl| t+(x+iy)^{a}\Bigr|^{\frac{2}{a}-2}\bigl| x+iy\bigr|^{2a-2},\quad t\in \Delta, \ x+iy\in \Delta.
\end{align*}
Using this estimate, it can be simply verified that the conditions \eqref{sufficient-primonja} and \eqref{lade-rade} hold provided that $\zeta>0,$ $\rho_{1}(x+iy):=(1+|x+iy|)^{-\zeta},$ $x+iy\in \Delta$ and $q(2-\frac{2}{a})-\frac{q}{a}\zeta<-1.$
\end{itemize}
\item[(iii)] (\cite{kalmes-prim}) Let $p\in [1,\infty)$ and let $E:=L^{p}[0,1].$ Assume that $h\in C[0,1]$ and $(h(x)-\Re h(0))/x\in L^{1}[0,1].$ It is well known that the $C_{0}$-semigroup $(T(t))_{t\geq 0}$ governing the solutions of (linear) von Foerster-Lasota equation
\begin{align}\label{kalme}
u_{t}(t,x)=-xu_{x}(t,x)+h(x)u(t,x),\quad t\geq 0,\ x\in (0,1); \ \ u(0,x)=u_{0}(x),
\end{align}
is hypercyclic iff it is chaotic iff it is frequently hypercyclic iff $\Re (h(0))\geq (-1)/p.$ Let $f : [0,\infty) \rightarrow [1,\infty)$ be an increasing mapping such that $\liminf_{x\rightarrow \infty}f(x)/x>0.$ Then freqent hypercyclicity implies $f$-frequent hypercyclicity which further implies hypercyclicity, so that any of above conditions is necessary and sufficient for $f$-frequent hypercyclicity of  solutions to \eqref{kalme}. This example particularly shows that finding the necessary and sufficient conditions for $f$-frequent hypercyclicity of $C_{0}$-semigroups induced by semiflows is far from being a trivial problem.
\end{itemize}

In contrast to Theorem \ref{athens}, Theorem \ref{oma} and \cite[Theorem 2.2]{man-peris} can be always applied for proving frequent hypercyclicity of $C_{0}$-semigroups induced by semiflows:

\subsection{Examples IV}
\label{ghf}
\begin{itemize}
\item[(i)] (\cite[Example 3.1.28(v)]{knjigaho})
Assume that $\Delta :=[0,\infty),$ $\Omega :=\{(x,y) \in {{\mathbb R}^{2}} : x^{2}+y^{2}>1 \},$
$p>0,$ $q\in {\mathbb R}$ and %$|(x,y)|=\sqrt{x^{2}+y^{2}},\ (x,y)\in {{\mathbb R}^{2}}$ and
$$
\varphi(t,x,y):=e^{pt}(x\cos qt -y\sin qt, x\sin
qt +y\cos qt),\ t\geq 0,\ (x,y)\in \Omega .
$$
Then
$\varphi : \Delta \times
\Omega \rightarrow \Omega$ is a semiflow and we already know that
$(T_{\varphi}(t))_{t\geq 0}$ is topologically mixing and chaotic in the Fr\'echet space $C(\Omega, {\mathbb K}).$ Using Theorem \ref{oma}, it can be simply shown that $(T(t))_{t\geq 0}$ is frequently hypercyclic, as well.
\item[(ii)] (\cite[Example 5.5]{kalmes}) Let $\Delta:=[0,\infty),$ $E:=L^{p}_{\rho_{1}}({\mathbb R}^{n}),$ $\rho_{1}(x):=1/(1+|x|^{2})$ ($x\in {\mathbb R}^{n}$) and $\varphi(t,x):=e^{t}x$ ($t\geq 0,$ $x\in {\mathbb R}^{n}$). We already know that the $C_{0}$-semigroup $(T_{\varphi}(t))_{t\geq 0}$ is chaotic. Using \cite[Theorem 2.2]{man-peris}, it can be simply proved that $(T_{\varphi}(t))_{t\geq 0}$ is frequently hypercyclic, as well. The strongly continuous (weighted composition) semigroups considered in \cite[Example 3.20, Example 4.13]{kalmes} are also frequently hypercyclic, which can be shown by applying \cite[Theorem 2]{kalmes-prim}.
\item[(iii)] (\cite[Example 3.1.41(i)]{knjigaho}) Assume that $p\in [1,\infty),$ $\alpha \in (0,\frac{\pi}{2}],$
$\Delta\in \{[0,\infty),\ \Delta(\alpha)\},$ $\Omega=(1,\infty)$ and $0<\alpha_{1}\leq 1.$ Define $\varphi : \Delta \times \Omega
\rightarrow \Omega$ and $\rho_{1} :
\Omega \rightarrow (0,\infty)$ through:
\begin{equation}\label{ekv}
\varphi(t,x):=(\Re t+x^{\alpha_{1}})^{1/\alpha_{1}}
\mbox{ and }\rho_{1}(x):=e^{-x^{\alpha_{1}}},\ t\in \Delta,\ x\in
\Omega.
\end{equation}
It is straightforward to verify that $\varphi(\cdot,\cdot)$ is a
semiflow and
$(T_{\varphi}(t))_{t\in \Delta}$ is a $C_{0}$-semigroup on
$L^{p}_{\rho_{1}}(\Omega,{\mathbb K}).$ Employing \cite[Theorem 2.2]{man-peris}, we can simply prove that $(T_{\varphi}(t))_{t\in R}$ is frequently hypercyclic for any ray $R\subseteq \Delta,$ so that $(T_{\varphi}(t))_{t\in \Delta}$ is frequently hypercyclic, as well.
\end{itemize}

\subsection{Open problems}

We propose two open problems motivated by reading the paper \cite{erdos} by K.-G. Grosse-Erdmann.
Let us recall that, for every operator $T\in L(E)$ such that $T^{-1}\in L(E),$ the hypercyclicity of $T$ implies the hypercyclicity of $T^{-1};$ see e.g. \cite{Grosse}. As pointed out in \cite[Problem 9]{erdos}, the corresponding question for frequent hypercyclicity is still unsolved. On the other hand, in \cite[Theorem 2.5]{fund}, W. Desch, W. Schappacher and G. F. Webb have proved that for any $C_{0}$-group of linear operators $(T(t))_{t\in {\mathbb R}}$ on a separable Banach space $E,$ the following assertions are equivalent:
\begin{itemize}
\item[(i)] the semigroup $(T(t))_{t\geq 0}$ is hypercyclic;
\item[(ii)] the semigroup $(T(-t))_{t\geq 0}$ is hypercyclic;
\item[(iii)] there exists some $x \in E$ such that both sets $\{T(t)x : t\geq 0\}$ and $\{T(-t)x : t\geq 0\}$
are dense in $E.$
\end{itemize}
A similar assertion holds in separable Fr\'echet spaces (see e.g. \cite[Theorem 3.1.2(iv), Theorem 3.1.4(ii)]{knjigaho} and the proof of \cite[Theorem 2.5]{fund}) and, based on the above discussion, we would like to propose the following problems:\vspace{0.1cm}

{\sc Problem 1.} Let $(T(t))_{t\in {\mathbb R}}$ be a $C_{0}$-group of linear operators $(T(t))_{t\in {\mathbb R}}$ on a separable Fr\'echet space $E.$ Is it true that the following assertions are equivalent:
\begin{itemize}
\item[(i)] the semigroup $(T(t))_{t\geq 0}$ is frequently hypercyclic;
\item[(ii)] the semigroup $(T(-t))_{t\geq 0}$ is frequently hypercyclic;
\item[(iii)] there exists some $x \in E$ that is both frequently hypercyclic vector for $(T(t))_{t\geq 0}$ and frequently hypercyclic vector for $(T(-t))_{t\geq 0}?$
\end{itemize}
\vspace{0.1cm}

{\sc Problem 2.} Let $(T(t))_{t\in {\mathbb R}}$ be a $C_{0}$-group of linear operators $(T(t))_{t\in {\mathbb R}}$ on a separable Fr\'echet space $E.$ Profile the class of increasing functions $f : [0,\infty) \rightarrow [1,\infty)$ for which the following assertions are equivalent:
\begin{itemize}
\item[(i)] the semigroup $(T(t))_{t\geq 0}$ is $f$-frequently hypercyclic;
\item[(ii)] the semigroup $(T(-t))_{t\geq 0}$ is $f$-frequently hypercyclic;
\item[(iii)] there exists some $x \in E$ that is both $f$-frequently hypercyclic vector for $(T(t))_{t\geq 0}$ and $f$-frequently hypercyclic vector for $(T(-t))_{t\geq 0}.$
\end{itemize}

The notion of $f$-frequent hypercyclicity is still very unexplored and we can propose a great number of other problems for $C_{0}$-semigroups and single operators in Fr\' echet spaces.

%\begin{acknowledgement}
%  An acknowledgement may be placed at the end of the~article.
%\end{acknowledgement}

\end{document}